\def\th@plain{%
  \thm@notefont{}
  \itshape 
}
\def\th@definition{%
  \thm@notefont{}
  \normalfont 
}
\DeclareMathOperator*{\argmin}{arg\,min}
\newcommand{\rom}[1]{\uppercase\expandafter{\romannumeral #1\relax}}
\DeclarePairedDelimiterX{\norm}[1]{\lVert}{\rVert}{#1}
\DeclarePairedDelimiterX{\bnorm}[1]{\biggl\lVert}{\biggr\rVert}{#1}
\DeclarePairedDelimiterX{\abs}[1]{\lvert}{\rvert}{#1}
\newtheorem{definition}{Definition}
\newtheorem{remark}{Remark}
\newtheorem{theorem}{Theorem}
\newtheorem{lemma}{Lemma} 
\newtheorem{corollary}{Corollary}
\newtheorem{example}{Example}
\newtheorem{assumption}{Assumption}
\newtheorem{proposition}{Proposition}
\def\N{\mathbb{N}}
\def\Real{\mathbb{R}}
\def\G{\mathbb{G}_n}
\def\T{{ \mathrm{\scriptscriptstyle T} }}
\def\P{{ \mathrm{pr} }}
\def\v{{\varepsilon}}
\def\l{l} 
\def\L{\mathcal{L}_n} 
\def\R{\mathcal{R}_n} 
\def\Real{\mathbb{R}}
\def\Lhat{\hat{\mathcal{L}}_n}
\def\E{\mathbb{E}_{*}}
\def\H{\mathcal{H}_n}
\def\Z{\mathcal{Z}}
\def\F{\mathcal{F}}
\def\P{P_*}
\def\de{\overset{\Delta}{=}}
\def\limp{\rightarrow_{p}}
\def\limd{\rightarrow_{d}}
\def\th{\bm \theta} 
\def\thE{\hat{\bm \theta}_n} 
\def\thT{\bm \theta^*_n} 
\def\thTT{\bm \theta^*} 
\def\D{\mathcal{Z}} 
\def\mo{\alpha} 
\def\Mo{\mathcal{A}_n} 
\def\MoT{\mathcal{A}_T} 
\def\p{d_n} 
\def\d{d_n[\mo]} 
\def\tr{\textit{tr}}
\def\card{\textrm{card}}
\def\var{\textrm{Var}}
\def\size{\textrm{size}}
\def\S{\Sigma}
\def\Lc{\mathcal{L}_t^c}
\def\Lcn{\mathcal{L}_n^c}
\def\INT{\textrm{int}}
\def\yt{\bm z_t}
\def\ys{\bm z_s}
\def\deg{D} 
\newcommand{\doublewidetilde}[1]{{%
  \mathpalette\double@widetilde{#1}%
}}
\newcommand{\double@widetilde}[2]{%
  \sbox\z@{$\m@th#1\widetilde{#2}$}%
  \ht\z@=.9\ht\z@
  \widetilde{\box\z@}%
}
\newcommand{\ls}{l}
\newcommand{\yy}{\mathcal{Y}}
\newcommand{\RR}{\mathbb{R}}
\newcommand{\bt}{\beta}
\newcommand{\ind}[1]{1_{#1}}
\def\co{}
\begin{document}

\title{On Statistical Efficiency in Learning}

\author{Jie~Ding,~
		Enmao~Diao, 
		Jiawei~Zhou,~and
        Vahid~Tarokh
\thanks{J.~Ding is with the School of Statistics, University of Minnesota, Minneapolis, Minnesota 55455, USA.
E.~Diao and V.~Tarokh are with the Department of Electrical and Computer Engineering, Duke University, Durham, North Carolina 27708, USA.
J.~Zhou is with the John A. Paulson School of Engineering and Applied Sciences, Harvard University, Cambridge, MA, 02138 USA.}
\thanks{This research was funded by the Army Research Office (ARO) under grant number W911NF-20-1-0222.}
\thanks{This paper was presented in part at the 2018 IEEE International Conference on Acoustics, Speech and Signal Processing.}

}

\markboth{IEEE Transactions on Information Theory 
}
{Shell \MakeLowercase{\textit{et al.}}: IEEE Transactions on Information Theory }

\maketitle

\begin{abstract}
A central issue of many statistical learning problems is to select an appropriate model from a set of candidate models. Large models tend to inflate the variance (or overfitting), while small models tend to cause biases (or underfitting) for a given fixed dataset. In this work, we address the critical challenge of model selection to strike a balance between model fitting and model complexity, thus gaining reliable predictive power.  We consider the task of approaching the theoretical limit of statistical learning, meaning that the selected model has the predictive performance that is as good as the best possible model given a class of potentially misspecified candidate models. We propose a generalized notion of Takeuchi's information criterion and prove that the proposed method can asymptotically achieve the optimal out-sample prediction loss under reasonable assumptions. It is the first proof of the asymptotic property of Takeuchi's information criterion to our best knowledge. Our proof applies to a wide variety of nonlinear models, loss functions, and high dimensionality (in the sense that the models' complexity can grow with sample size). The proposed method can be used as a computationally efficient surrogate for leave-one-out cross-validation. Moreover, for modeling streaming data, we propose an online algorithm that sequentially expands the model complexity to enhance selection stability and reduce computation cost. Experimental studies show that the proposed method has desirable predictive power and significantly less computational cost than some popular methods. 
\end{abstract}

\begin{IEEEkeywords}
	Cross-validation; 
	Expert learning; 
	Adaptivity to oracle;
	Model expansion;
	Model selection;
	Takeuchi's information criterion.
\end{IEEEkeywords}

\section{Introduction}

How much knowledge can we learn from a given set of data?
Statistical modeling provides a simplification of real-world complexity. It can be used to learn key representations from available data and to predict future data.
To model the data, typically the first step in data analysts is to narrow the scope by specifying a set of candidate parametric models (referred to as model class). 
The model class can be determined by exploratory studies or scientific reasoning.
For data with specific types and sizes, each postulated model may have its advantages. In the second step, data analysts estimate the parameters and fitting performance of each candidate model. 
An illustration of a typical learning procedure is plotted in Fig.~\ref{fig:learning}, where the underlying data generating process may or may not be included in the model class.
Selecting the model with the best fitting performance usually leads to suboptimal results. For example, the largest model always fits the best in a nested model class. But an overly complex model can lead to inflated variance in parameter estimation and thus overfitting. 
Therefore, the third step is to apply a suitable model selection procedure, which will be elaborated in the next section. 

\begin{figure*}[h]
\centering
 \includegraphics[width=0.7\linewidth]{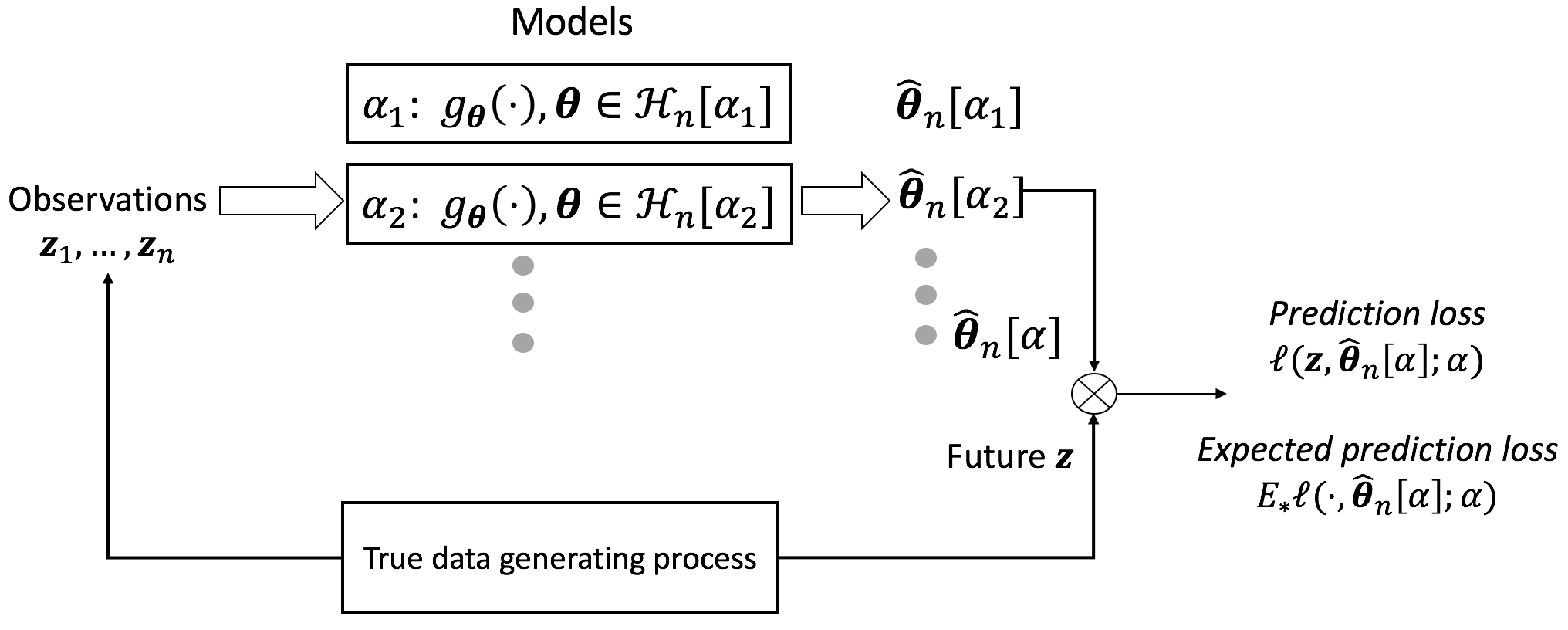}
 \vspace{-0.0in}
 \caption{Illustration of a typical learning procedure, where each candidate model $\mo_j$ is trained in terms of $\thE[\mo_j]$ in its parameter space $\mathcal{H}_n[\mo_j]$, and then used to evaluate future data under some loss function $\ell(\cdot)$. }
 \label{fig:learning}
 \vspace{-0.0in}
\end{figure*}


How can we quantify the theoretical limits of learning procedures? We first introduce the following definition that quantifies the predictive power of each candidate model. 

\begin{definition}[Out-sample prediction loss] \label{def:loss}
The loss function for each sample size $n$ and $\mo \in \Mo$ (model class) is a map $\l(\cdot,\cdot ; \mo): \D \times \H[\mo] \rightarrow \Real$, usually written as $\l(\bm z, \th; \mo)$, where $\D$ is the data domain, $\H[\mo]$ is the parameter space associated with model $\mo$, and $\mo$ is included to emphasize the model under consideration. 
As Fig.~\ref{fig:learning} shows, for a loss function and a given dataset $\bm z_1,\ldots,\bm z_n$ which are independent and identically distributed (i.i.d.), each candidate model $\mo$ produces an estimator $\thE[\mo]$ (referred to as the minimum loss estimator) defined by 
\begin{align}
	\thE[\mo] \de \argmin_{\theta \in \H[\mo]}  \frac{1}{n}\sum_{i=1}^n \l(\bm z_i, \th; \mo). \label{eq:MLE}
\end{align}

Moreover, given by candidate model $\mo$, denoted by $\L(\mo)$, the out-sample prediction loss, also referred to as the generalization error in machine learning, is defined by 
	\begin{align}
		\L(\mo) 
			&\de \E \l \bigl(\cdot, \thE[\mo]; \mo \bigr)	\nonumber\\
			&= \int_{\D} p(\bm z) \l \bigl(\bm z, \thE[\mo]; \mo\bigr) d \bm z .\label{eq:loss}
	\end{align}
Here, $\E$ denotes the expectation with respect to the distribution of a future unseen random variable $\bm z$ (conditional on the observed data). 
We also define the risk by 
$$\R[\mo] = \mathbb{E}_{*,o} \L[\mo],$$  
where the expectation in $\R[\mo]$ is taken with respect to the observed data.
\end{definition}

{\co 
The above notation applies to both supervised and unsupervised learning. In supervised learning, $\bm z$ often consists of a label $\bm y$ and feature $\bm x$, and only the entries of $\bm x$ associated with $\mo$ are involved in the evaluation of $\l$. 
In Statistical Learning Theory, the $\L(\mo) $ may be written as $R(f)= \E \ell(f(X), Y)$, where $f$ is the estimated function under $\mo$. We note that the loss function $\ell$ here is not tied to a particular parametrization of $f$. In comparison, our earlier notion of $\ell$ involves model parameters to develop technical results in this paper. The parameterization may represent regression coefficients from basis expansions (e.g., polynomials, splines, and wavelets) or distributional parameters in a finite mixture model.
}

Throughout the paper, we consider loss functions $\l(\cdot)$ such that $\L[\mo]$ is always nonnegative. A common choice is to use negative log-likelihood of model $\mo$ minus that of the true data-generating model (or its closest candidate model). Table~\ref{table:loss} lists some other loss functions widely used in machine learning. We also provide two motivating examples below.

\begin{example}[Generalized linear models] \label{example:GLM}
	In a generalized linear model (GLM), each response variable $y$ is assumed to be generated from a particular distribution (e.g. Gaussian, Binomial, Poisson, Gamma), with its mean $\bm \mu$ linked with potential covariates $x_1,x_2,\ldots$ through $\E (y)= \mu = g(\beta_1 x_1 + \beta_2 x_2 +\cdots)$ where $g(\cdot)$ is a link function. 
	In this example, data $\bm z=[y,x_1,x_2,\ldots]^\T$, unknown parameters are $\bm \theta = [\beta_1,\beta_2,\ldots]^\T$, and models are subsets of $\{\beta_1,\beta_2,\ldots\}$. 
	We may be interested in the most appropriate distribution family as well as the most significant variables $x_j$'s (\textit{relationships}). 
\end{example}

\begin{example}[Neural networks] \label{example:NN}
	In establishing a neural network (NN) model, we need to choose the number of neurons and hidden layers, activation function, and connectivity configuration.  
	In this example, data are similar to that of the above example, and unknown parameters are the weights on connected edges. 
	Clearly, with a larger number of neurons and connections, more complex functional relationships can be modeled. However, selecting large models may result in overfitting and more computational complexity. 
\end{example}

Based on Definition~\ref{def:loss}, a natural way to define the limit of statistical learning is by using the optimal prediction loss.  
 
\begin{table*}[h]
\centering
\caption{Some common loss functions in addition to negative log-likelihood}
\label{table:loss}
\begin{tabular}{cccccc}
\toprule
Name & quadratic            & exponential           & hinge                   & perceptron               & logistic            
\\ \midrule
Formula    & $(y - \th^\T \bm x)^2$ & $e^{-y \th^\T \bm x}$ & $\max\{0, 1 - y \th^\T \bm x\}$ & $\max\{0, - y \th^\T \bm x\}$ & $\log(1+e^{-y \th^\T \bm x})$ \\ 
\midrule
Domain    & $y \in \Real$ & $y \in \Real$ & $y \in \Real$ & $y \in \Real$ & $y \in \{0,1\}$ 
\\ 
\bottomrule
\end{tabular}
\end{table*}


{\co
\begin{definition}[Oracle performance] \label{def:lol}
	For a given data (of size $n$) and model class $\Mo$, the oracle performance is defined as $\min_{\mo \in \Mo} \L(\mo)$, the optimal out-sample prediction loss offered by candidate models.
\end{definition}
}

The oracle performance is associated with three key elements: data, loss function, and model class. 
Motivated by the original derivation of Akaike information criterion (AIC)~\cite{akaike1970statistical,akaike1998information} and Takeuchi's information criterion (TIC)~\cite{TIC}, we propose a penalized selection procedure and prove adaptivity to the oracle under some regularity assumptions. Those assumptions allow a wide variety of loss functions, model classes (i.e., nested, non-overlapping or partially-overlapping), and high dimensions (i.e., the models' complexity can grow with sample size).
It is worth noting that asymptotic analysis for a fixed number of candidate models with fixed dimensions is generally straightforward. Under some classical regularity conditions (e.g., \cite[Theorem 19.28]{van2000asymptotic}), the likelihood-based principle usually selects the model that attains the smallest Kullback-Leibler divergence from the data generating model. However, our high dimensional setting considers models whose dimensions and parameter spaces may depend on sample size. Thus we cannot directly use the technical tools that have been used in the classical asymptotic analysis for misspecified modes. We will develop some new technical tools in our proof.
Our theoretical results extend the classical statistical theory on AIC for linear (fixed-design) regression models to a broader range of generalized linear or nonlinear models. 
Moreover, we also review the conceptual and technical connections between cross-validation and information criteria.
In particular, we show that the proposed procedure can be much more computationally efficient than cross-validations (with comparable predictive power).

Why is it necessary to consider a high dimensional model class, in the sense that the number of candidate models or each model's complexity is allowed to grow with sample size?
In the context of regression analysis, technical discussions that address the question have been elaborated in \cite{shao1997asymptotic,DingBridge2}. 
Here, we give an intuitive explanation for a general setting. 
We let $\thT[\mo]$ denote the minimum loss parameter defined by  
\begin{align}
	\thT[\mo] &\de \argmin_{\theta \in \H[\mo]} \E \l(\cdot, \th; \mo) .	\label{eq:tMLE}
\end{align}
{\co We note that $\thT[\mo]$ for some models such as neural networks may not be unique. }
Using Taylor expansion under some regularity conditions, $\L[\mo]$ may be expressed as 
\begin{align}
	\L[\mo] =
	&\E \l(\bm z , \thT[\mo] ; \mo) + 
   	\frac{1}{2}\bigl(\thE[\mo]-\thT[\mo]\bigr)^\T V_n(\thT[\mo]; \mo) \bigl(\thE[\mo]-\thT[\mo]\bigr)\times \{1+o_p(1)\} \label{eq80_new}
\end{align}
where 
$V_n(\th; \mo) \de \E \nabla^2_{\th} l_{n}(\cdot, \th; \mo) $, and $o_p(1)$ is a sequence of random variables that converges to zero in probability. 
The main idea of (\ref{eq80_new}) is to expand $\L[\mo]$ at a projection point $\thT[\mo]$ under some uniform convergence condition in its vicinity.  
Theoretical justifications of (\ref{eq80_new}) or its variants for a model whose dimension depends on $n$ have been studied in several earlier work, e.g. in \cite{portnoy1984asymptotic,portnoy1986central,portnoy1988asymptotic}.
The out-sample prediction loss consists of two additive terms: the bias and the variance. 
Large models tend to reduce the bias but inflate the variance (\textit{overfitting}), while small models tend to reduce the variance but increase the bias (\textit{underfitting}) for a given fixed dataset. Suppose that ``all models are wrong'', meaning that the data generating model is not included in the model class. 
Usually, the bias is non-vanishing (with $n$) for a fixed model complexity (say $d$), and it is approximately a decreasing function of $d$; while on the other hand, the variance vanishes at rate $n^{-1}$ for a fixed $d$, and it is an increasing function of $d$. Suppose for example that the bias and variance terms are approximately $c_1 \gamma^{-d}$ and $c_2 d / n$, respectively, for some positive constants $c_1,c_2,\gamma$. Then the optimal $d$ is at the order of $\log(n)$.

In view of the above arguments, as more data become available, the model complexity needs to be enlarged to strike a balance between bias and variance (or \textit{approach the oracle}). 
To illustrate, we generated $n=100,200$ data from a logistic regression model, where coefficients are $\beta_i=10/i$ and covariates $x_i$'s are independent standard Gaussian (for $i=1,\ldots,100$). We consider the nested model class $\Mo=\{\{1\},\{1,2\},\ldots,\{1,2,\ldots,50\}\}$, and the loss function is chosen to be the negative log-likelihood. 
We summarize the results in Fig.~\ref{fig_intro}. 
As model complexity increases, the model fitting as measured by in-sample loss improves (Fig.~\ref{fig_intro1}). In contrast, the predictive power, as measured by the out-sample prediction loss, first improves and then deteriorates after some ``optimal dimension'' (Fig.~\ref{fig_intro2}).
Also, the optimal dimension becomes larger as the sample size increases.  


\begin{figure}[h!]
\centering
\subfloat[Subfigure 1 list of figures text][The fitting performance of each model under sample size $n=100$ (solid blue) and $n=200$ (dash red). ]{
\includegraphics[width=0.3\textwidth]{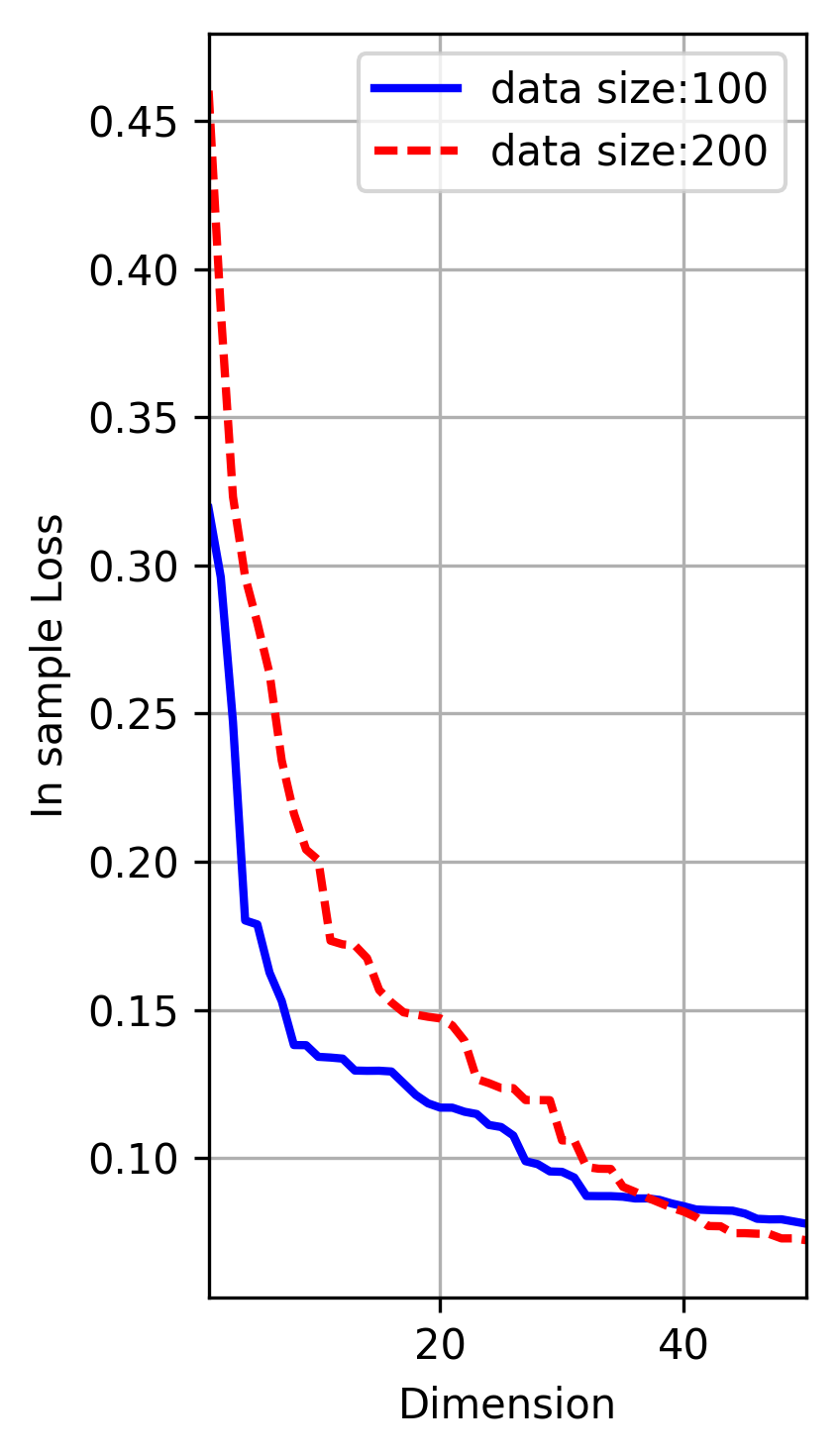}
\label{fig_intro1}}
\quad
\subfloat[Subfigure 2 list of figures text][The out-sample prediction loss (numerically computed using independently generated data) of each model under sample size $n=100$ (solid blue) and $n=200$ (dash red).]{
\includegraphics[width=0.3\textwidth]{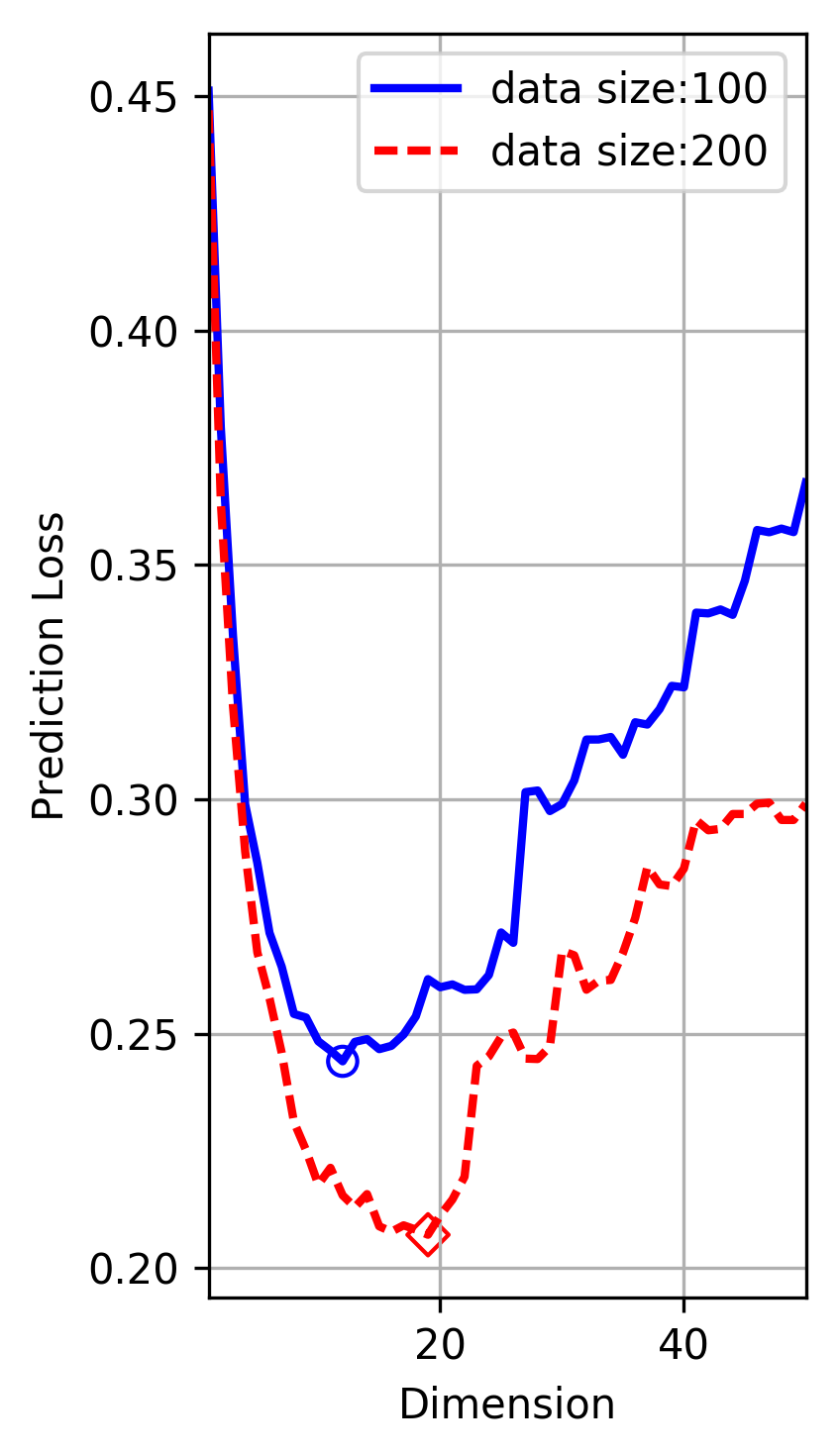}
\label{fig_intro2}}
\caption{Experiment showing the ``bigger models for bigger data'' phenomena that is almost ubiquitous in statistical prediction and machine learning tasks. }
\label{fig_intro}
\end{figure}

As data sequentially arrive, the selected model from our proposed method (and many other existing methods such as cross-validation) suffers from fluctuations due to randomness. 
A conceptually appealing and computationally efficient way is to move from small models to larger models sequentially.
For that purpose, based on the proposed method, we further propose a sequential model expansion strategy that aims to facilitate interpretability of learning.

The outline of the paper is given as follows. 
In Section~\ref{sec:LOL}, we propose a computationally efficient method that determines the most appropriate learning model as more data become available. 
We prove that the oracle performance can be asymptotically approached under some regularity assumptions. 
In Section~\ref{sec:expand}, we propose a model expansion technique building upon a new online learning algorithm, which we refer to as ``graph-based'' learning. The online learning algorithm may be interesting on its own as it exploits graphical structure when updating the expert systems and computing the regrets. 
In a supplementary material available at~\cite{DingLoLExperiment}, we experimentally demonstrate the applications of the proposed methodology to generalized linear models and neural networks in selecting the variables/neurons. 
The related open-sources codes are also provided.

\section{Related Work}

A wide variety of model selection techniques have been proposed in the past fifty years, motivated by different viewpoints and justified under various circumstances. We refer to \cite{barron1999risk,massart2007concentration,claeskens2008model,arlot2010survey,DingOverview} for more surveys. 
This section briefly reviews some closely related work in information criterion and cross-validation, and includes a derivation of TIC.

\subsection{Information Criteria}

Examples of penalized selection include final prediction error criterion~\cite{akaike1969fitting}, AIC~\cite{akaike1970statistical,akaike1998information}, TIC~\cite{TIC}, BIC~\cite{schwarz1978estimating} and its Bayesian counterpart Bayes factor~\cite{casella2009consistency}, minimum description length criterion~\cite{hansen2001model}, Hannan and Quinn criterion~\cite{hannan1979determination}, predictive minimum description length criterion~\cite{rissanen1986stochastic,wei1992predictive}, $C_p$ method~\cite{mallows1973some}, generalized information criterion (GIC$_{\lambda_n}$) with $\lambda_n \rightarrow \infty$~\cite{nishii1984asymptotic,rao1989strongly,shao1997asymptotic}, generalized cross-validation method (GCV)~\cite{craven1978smoothing}, the Goldenshluger-Lepski method~\cite{lepskii1991problem,goldenshluger2008universal,goldenshluger2013general}, and the bridge criterion (BC)~\cite{DingBridge}. 
Recently, a regularization approach named as information criterion estimation (ICE)~\cite{dixon2018takeuchi} is proposed 
that extends TIC to handle non-MLE estimates in over-parameterized models. 
An extension of AIC and Mallows' $C_p$ method 
is the `slope heuristics' approach proposed in~\cite{birge2001gaussian,birge2007minimal} for Gaussian model selection and later developed to more general settings~\cite{baudry2012slope,arlot2019minimal,arlot2019rejoinder}.
The main idea of slope heuristics is to recognize the existence of a minimal penalty such that the 
 out-sample prediction loss of the selected model with lighter penalties explode, and to show that a penalty equal to twice the minimal penalty often enables model selection that meets the 
inequality:
$ 
\L[\hat{\mo}_n] \leq c_n \min_{\mo \in \Mo} \L[\mo] + \eta_n, 
$ 
also called oracle 
inequality, for $c_n$ close to 1 and $\eta_n $ negligible with respect to the value of $\min_{\mo \in \Mo} \L[\mo]$. 
In theory, the asymptotic efficiency is a limiting requirement of the oracle inequality 
with $c_n =1+o_p(1)$ and $\eta_n = o_p(1) \times \min_{\mo \in \Mo} \L[\mo]$ as $n\rightarrow \infty$. 
There have been fruitful results in non-asymptotic quantifications of $c_n$ and $\eta_n$ using concentration inequalities (see e.g.~\cite{boucheron2003concentration,boucheron2003concentration2,boucheron2013concentration,koltchinskii2011introduction,arlot2019minimal,arlot2019rejoinder}). Non-asymptotic analysis is often based on concentration inequalities or Stein's method \cite{massart2007concentration,gaunt2017chi}. In this work, we are not looking for oracle inequalities with non-asymptotic analysis.
On the other hand, the recent development of slope heuristics has motivated the data-driven construction of penalty terms instead of using pre-determined penalty functions. An example in this direction is the dimension jump method~\cite{birge2007minimal,arlot2009data}, which, for a given penalty shape, identifies the suitable multiplicative constant by searching for a significant jump of the selected dimension against different constants.

 
\subsection{Cross-validation (CV)}
The basic idea of cross-validation~\cite{stone1974cross,allen1974relationship} is to split the data into two parts: one for training and one for testing. The model with the best testing performance is selected, hoping that it will perform well for future data. 
It is a common practice to apply a 10-fold CV, 5-fold CV, 2-fold CV, or 30\%-for-testing.
In general, the advantages of the CV method are its stability and easy implementation. 
However, \textit{is cross-validation really the best choice?}

In fact, it has been shown that only the delete-$d$ CV method with $\lim_{n \rightarrow \infty} d/n = 1$~\cite{geisser1975predictive,burman1989comparative,shao1993linear,zhang1993model}, or the delete-$1$ CV method (or leave-one-out, LOO)~\cite{stone1977asymptotic} can exhibit asymptotic optimality.  
Specifically, the former CV exhibits the same asymptotic behavior as BIC, which is typically consistent in a well-specified model class (i.e., it contains the true data generating model), but is suboptimal in a misspecified model class. 
The latter CV is shown to be asymptotically equivalent to AIC/TIC and GCV if $\d = o(n)$~\cite{stone1977asymptotic,shao1997asymptotic}, which is asymptotically efficient in a misspecified model class but usually overfits in a well-specified model class.  
An appropriate choice of the splitting ratio often depends on specific learning tasks, such as the prediction of unobserved data, selection of model, selection of other criteria~\cite{zhang2015cross}, goodness-of-fit test~\cite{DingBAGofT}.  
We refer to \cite{shao1997asymptotic,yang2005can,arlot2010survey,DingBridge,DingOverview} for more detailed discussions on 
the discrepancy and reconciliation of different CVs. 

In particular, for the prediction purpose, common folklore that advocates the use of $k$-fold or 30\%-for-testing CV are asymptotically suboptimal (in the sense of Definition~\ref{def:efficiency}), even in linear regression models~\cite{shao1997asymptotic}. 
Since the only optimal CV is LOO-type in misspecified settings, 
it is more appealing to apply AIC or TIC 
	that gives the same asymptotic performance, and \textit{significantly reduces the computational complexity} by $n$ times. 
For general misspecified nonlinear model class, we shall prove 
	that the GTIC procedure asymptotically approaches the oracle. While the asymptotic performance of LOO is not clear in that case, it is typically more complex to implement. 
To demonstrate that, we shall provide some experimental studies in the supplementary material. As a result, the GTIC procedure can be a promising competitor of various standard CVs adopted in practice.

%

\subsection{Background of TIC}

TIC~\cite{TIC} was heuristically derived as an alternative of AIC, also from an information-theoretic view rooted in Kullback-Leibler (KL) divergence. 
{\co Recall that AIC selects a model that minimizes the negative maximum log-likelihood value plus the model dimension.}
In the seminal work of \cite{stone1977asymptotic}, TIC is shown to be asymptotically equivalent to cross-validation when the purpose is to minimize the KL divergence, and AIC is a special case of TIC when the models under consideration are well-specified.
It appears neither widely appreciated nor used~\cite{burnham2003model} compared with other information criteria such as AIC or Bayesian information criterion (BIC)~\cite{schwarz1978estimating}.
In terms of provable asymptotic performance, only AIC is known to be asymptotically efficient for variable selection in regression models~\cite{shibata1981optimal} and autoregressive order selection in time series models~\cite{shibata1980asymptotically,ing2005orderselection} when models are misspecified. 
Conceptually, TIC was proposed as a surrogate for AIC in general misspecified settings, but the optimality of AIC and TIC in the general context remains unknown.  
As the original paper of TIC~\cite{TIC} was not written in English, we review it for the completeness of the paper. Similar derivations can be found in, e.g., \cite{dixon2018takeuchi}.

Suppose that our goal is to select the model $\mo$ that minimizes logarithmic loss $\E\{-\log p_{\thE[\mo]}(Y)\}$ (or equivalently, minimizes the KL divergence from the true data-generating distribution), where $\thE[\mo]$ is the MLE under model $\mo$. 
For notational convenience, we drop the model index $\mo$ and focus on one model. 
The motivation of TIC was to approximate $\E\{-\log p_{\thE}(\bm z)\}$ by $n^{-1}\sum_{i=1}^n \{-\log p_{\thE}(\bm z_i) \} + \lambda_n$, where the first term is computable from data and the second term is to be asymptotically approximated. 
Under some regularity conditions, the classical sandwich formula of MLE~\cite[Theorem 3.2]{white1982maximum} gives $\sqrt{n}(\thE - \thTT) \limd \mathcal{N}(0, V^{-1}JV^{-1})$ for some $\thTT$ in the parameter space, with 
\begin{align*}
	V=-\E\biggl\{\frac{\partial^2}{\partial\th^2} \log p_{\thTT}(Y)\biggr\}, \,
	J=\E\biggl\{\biggl(\frac{\partial}{\partial\th} \log p_{\thTT}(Y) \biggr) \biggl( \frac{\partial}{\partial\th} \log p_{\thTT}(Y)\biggr)^\T \biggr\}.
\end{align*}
Applying Taylor expansion at $\th = \thTT$, we have 
\begin{align}
	\E\{-\log p_{\thE}(\bm z)\} &\approx \E\{-\log p_{\thTT}(\bm z)\} + \frac{1}{2} (\thE-\thTT)^\T V (\thE-\thTT) \nonumber	\\
	n^{-1}\sum_{i=1}^n \{-\log p_{\thE}(\bm z_i) \} &\approx n^{-1}\sum_{i=1}^n \{-\log p_{\thTT}(\bm z_i) 
	\} - (\thE-\thTT)^\T n^{-1}\sum_{i=1}^n \frac{\partial \log p_{\thTT}(\bm z_i) }{\partial \th} 
	+ \frac{1}{2} (\thE-\thTT)^\T V 
	(\thE-\thTT) 
	\nonumber
\end{align}
and thus
\begin{align}
	\lambda_n = \E\{-\log p_{\thE}(\bm z)\}- n^{-1}\sum_{i=1}^n \{-\log p_{\thE}(\bm z_i) \}
	\approx (\thE-\thTT)^\T n^{-1}\sum_{i=1}^n \frac{\partial \log p_{\thTT}(\bm z_i) }{\partial \th} 
\end{align}
for large $n$. 
Using  
 $$
n^{-1}\sum_{i=1}^n \frac{\partial \log p_{\thTT}(\bm z_i) }{\partial \th}
=n^{-1}\sum_{i=1}^n \frac{\partial \log p_{\thTT}(\bm z_i) }{\partial \th} - n^{-1}\sum_{i=1}^n \frac{\partial \log p_{\thE}(\bm z_i) }{\partial \th}  
\approx n^{-1}\sum_{i=1}^n \frac{\partial^2 \log p_{\thTT}(\bm z_i) }{\partial \th^2} (\thTT-\thE) 
 $$
and the asymptotic normality of $\sqrt{n}(\thE - \thTT)$, we may further approximate $\lambda_n$ by
$ 
	(\thE-\thTT)^\T V (\thE-\thTT)  
	\approx n^{-1} E(\bm \zeta^\T V \bm \zeta ) = n^{-1} \tr(V^{-1} J),
$ 
where $\bm \zeta \sim \mathcal{N}(0, V^{-1}JV^{-1})$.
For a well-specified model, we have $V=J$ and $\lambda_n \approx n^{-1} d$ with $d$ denoting the model dimension, and thus TIC becomes AIC.

Why should TIC be preferred over AIC in nonlinear models in general? Intuitively speaking, TIC has the potential of exploiting the nonlinearity while AIC does not. Recall our Example~\ref{example:NN} in the introduction, with the loss being the negative log-likelihood. It is well known from machine learning practice that neural network structures play a key role in effective prediction. However, information criteria such as AIC impose the same amount of penalty as long as the number of neurons remains the same, regardless of how neurons are configured. 

In this paper, we extend the scope of allowable loss functions and theoretically justify the use of GTIC (and thus TIC). 
Under some regularity conditions (elaborated in the Appendix), we shall prove that 
	the $\hat{\mo}_n$ selected by the GTIC procedure is asymptotically efficient (in the sense of Definition~\ref{def:efficiency}). 
	This is formally stated as a theorem in Subsection~\ref{subsec:theorem1}.
Our theoretical results extend some existing statistical theories on AIC for linear models. 
We note that the technical analysis of high dimensional (non) linear model classes is highly nontrivial. We will develop some new technical tools in the Appendix, which may be interesting on their own rights. 
	 

\section{{\co Adaptivity to the Oracle}} \label{sec:LOL}

\subsection{Notation}
Let $\Mo$, $\mo$, $\d$, $\H[\mo] \subset \Real^{\d}$ denote respectively a set of finitely many candidate models (also called the model class), a candidate parametric model, its dimension, its associated parameter space. 
Let $\p \de \max_{\mo \in \Mo} \d$ denote the dimension of the largest candidate model. 
We will frequently use a subscript $n$ to emphasize the dependency on the sample size and include an $\mo$ in the arguments of many variables or functions to emphasize their dependency on the model (and parameter space) under consideration. 
%
For a measurable function $f(\cdot)$, we define $E_n f(\cdot) = n^{-1} \sum_{i=1}^n f(\bm z_i)$. For example, 
$
E_n \l(\cdot, \bm \th; \mo) = n^{-1} \sum_{i=1}^n \l(\bm z_i, \th; \mo).
$ 
We let $\bm \psi_{n}(\bm z, \th; \mo) \de \nabla_{\th} \l(\bm z, \th; \mo)$, and $\nabla_{\th} \bm \psi_{n}(\bm z, \th; \mo) \de \nabla^2_{\th} \l(\bm z, \th; \mo)$, which are respectively measurable vector-valued and matrix-valued functions of $\th$.
We define the matrices
\begin{align*}
	V_n(\th; \mo) 
	&\de \E \nabla_{\th} \bm \psi_{n}(\cdot, \th; \mo) \\
	J_n(\th; \mo) 
	&\de \E \big\{ \bm \psi_{n}(\cdot, \th; \mo) \times \bm \psi_{n}(\cdot, \th; \mo)^\T \bigr\}  
\end{align*}
Recall the definition of $\L[\mo]$. Its sample analog (also referred to as the \textit{in-sample loss}) is defined by 
$
	\Lhat[\mo] 
	\de E_n \l \bigl(\cdot, \thE[\mo] ; \mo\bigr)	. 
$
Similarly, we define 
\begin{align*}
	\hat{V}_n(\th; \mo) 
	&\de E_n \nabla_{\th} \bm \psi_{n}(\cdot, \th; \mo) \\
	\hat{J}_n(\th; \mo) &\de E_n \big\{ \bm \psi_{n}(\cdot, \th; \mo) \times \bm \psi_{n}(\cdot, \th; \mo)^\T \bigr\}  
\end{align*}
{\co When $\ell$ is the negative log-likelihood, the above $\psi_n$ is the score function, and $\widehat{V}_n$
and $\widehat{J}_n$ are candidates for estimating a Fisher information matrix.}


Throughout the paper, the vectors are arranged in a column and marked in bold.  
Let $\norm{\cdot}$ denote the Euclidean norm of a vector or spectral norm of a matrix.
Let $\INT(S)$ denote the interior of a set $S$. 
For any vector $\bm c \in \mathbb{R}^d$ ($d \in \N$) and scalar $r > 0$, let $B(\bm c,r)\de \{\bm x \in \mathbb{R}^d: \norm{\bm x-\bm c} \leq r \}$. 
For a positive semidefinite matrix $V$ and a vector $\bm x$ of the same dimension, we shall abbreviate $\bm x^\T V \bm x$ as $\norm{\bm x}_V^2$. 
For a given probability measure $\P$ and a measurable function $m$, let $\norm{m}_{\P} \de (\E m^2)^{1/2}$ denote the $L_2(\P)$-norm.
Unless otherwise stated, $\E $ denotes the expectation with respect to the true data generating process. 
Let $\textrm{eig}_{\min}(V)$ (resp. $\textrm{eig}_{\max}(V)$) denote the smallest (resp. maximal) eigenvalue of a symmetric matrix $V$. 
For a sequence of scalar random variables $f_n$, 
	we write $f_n=o_p(1)$ if $\lim_{n \rightarrow \infty} f_n= 0$ in probability, 
	and $f_n = O_p(1)$, if it is stochastically bounded. 
For a fixed measurable vector-valued function $\bm f$, we define
$$ \G \bm f \de \sqrt{n}(E_n - \E) \bm f , $$
the empirical process evaluated at $\bm f$. 
For $a,b \in \Real$, we write $ a \lesssim b$ if $a \leq c b$ for a universal constant $c$.
For a vector $\bm a$ or a vector-valued function $\bm f$, we let $a_i$ or $f_i$ denote the $i$th component.

We use $\rightarrow$ and $\limp$ to respectively denote the deterministic and in probability convergences. Unless stated explicitly, all the limits throughout the paper are with respect to $n \rightarrow \infty$ where $n$ is the sample size.

\subsection{Approaching the Oracle -- Selection Procedure} \label{subsec:GTIC}

An appropriate model selection procedure is necessary to strike a balance between the model fitting and {model complexity} based on the observed data to obtain the optimal predictive power.  
The basic idea of penalized selection is to impose an additive penalty term on the in-sample loss so that larger models are more penalized.
%
In this paper, we follow the aphorism that ``all models are wrong'', and assume that the model class under consideration is misspecified.

\begin{definition}[Efficient learning] \label{def:efficiency} 
	Our goal is to select $\hat{\mo}_n \in \Mo$ that is asymptotically efficient, in the sense that 
	\begin{align}
		\frac{ \L[\hat{\mo}_n] }{ \min_{\mo \in \Mo} \L[\mo] } \limp 1 \label{eq:efficiency}	
	\end{align}
	as $n \rightarrow \infty$. 
\end{definition}
Note that this requirement is weaker than selecting the exact optimal model $\argmin_{\mo \in \Mo} \L[\mo]$. 
Also, the concept of asymptotic efficiency in model selection is reminiscent of its counterpart in parameter estimation theory.  
A similar definition has been adopted in the study of the optimality of AIC in autoregressive order selection~\cite{shibata1980asymptotically} and variable selection in linear regression models~\cite{shibata1981optimal}. 

It is worth noting that the above definition is in the scope of the available data and a specified class of models. Because we are in a data-driven setting where it is unrealistic to compete with the best performance attainable with full knowledge of the underlying distribution, 
we chose the above rationale of efficient learning instead of using 
	\begin{align}
		\frac{ \L[\hat{\mo}_n] }{ \min_{\mo \in \Mo} \E \l \bigl(\cdot, \thT[\mo]; \mo \bigr) } \limp 1  \nonumber
	\end{align}
whose denominator does not reveal the influence of finite-sample data. 
In other words, Definition~\ref{def:efficiency} calls for a model whose predictive power can practically approach the best offered by the candidate models (i.e., the oracle in Definition~\ref{def:lol}). 

A related but different school of thoughts is {structural risk minimization} in the statistical learning literature. In that context, the out-sample prediction loss is usually bounded using in-sample loss plus a positive term (e.g., a function of the Vapnik-Chervonenkis (VC) dimension~\cite{vapnik1971uniform} for a classification model). 
Definitive treatment of this line of work can be found in, e.g.,~\cite{shawe1998structural,koltchinskii2001rademacher,bousquet2003introduction,koltchinskii2011introduction} and the references therein. The major difference of our setting compared with that in learning theory is our requirement that the positive term plus the in-sample loss should asymptotically approach the true out-sample loss (as sample size goes to infinity).  
 
Another related notion often used to describe model selection performance is {minimax-rate optimality}~\cite{barron1999risk,yang2005can}.
In nonparametric estimation of the regression function $f\in \mathcal{F}$ under the squared loss, tight minimax risk bounds for $\inf_{\hat{f}} \sup_{f \in \mathcal{F}} n^{-1} \sum_{i=1}^n \E (\hat{f}(x_i)-f(x_i))^2$ have been obtained since the pioneering work of \cite{pinsker1980optimal,pinsker1984learning} (see~\cite{nussbaum1999minimax,tsybakov2008introduction} for more discussions).
A model selection method $\nu$ is said to be minimax-rate optimal over $\mathcal{F}$, if $\sup_{f \in \mathcal{F}} n^{-1} \sum_{i=1}^n \E \{\hat{f}_{\nu}(x_i)-f(x_i)\}^2$ converges at the same rate as the aforementioned minimax risk, where $\hat{f}_{\nu}$ is the least squares estimate of $f$ under the variables selected by $\nu$. 
 In contrast to the notion of asymptotic efficiency, which we focus on here, minimax-rate optimality allows the true data-generating model to vary and thus is a stronger requirement. The asymptotic efficiency is in a pointwise sense, meaning that a fixed but unknown data-generating process already generates the data.
It has been proved that AIC is minimax-rate optimal for a range of variable selection tasks, and there exists no model selection method that achieves such optimality as well as selection consistency~\cite{yang2005can}.
Meanwhile, it is possible to simultaneously combine asymptotic efficiency and selection consistency, and that motivated recent research in reconciling AIC-type and BIC-type model selection methods~\cite{ing2007accumulated,erven2012catching,zhang2015cross,DingBridge}.

We propose to use the following penalized model selection procedure, which extends TIC from negative log-likelihood to general loss functions. 

\textbf{Generalized TIC (GTIC) procedure:} Given data $\bm z_1,\ldots, \bm z_n$ and a specified model class $\Mo$. We select a model $\hat{\mo} \in \Mo$ in the following way:
1) for each $ \mo \in \Mo $, find the minimal loss estimator $\thE[\mo]$ defined in (\ref{eq:MLE}), and record the minimum as $\Lhat[\mo]$;  
2) select $\hat{\mo} = \argmin_{\mo \in \Mo} \Lcn[\mo]$, where  
	\begin{align}
		 \Lcn[\mo] \de 
		\Lhat[\mo] + n^{-1} \tr\bigl\{ \hat{V}_n(\thE[\mo]; \mo)^{-1} \hat{J}_n(\thE[\mo]; \mo)\bigr\} . \label{eq30} 
	\end{align}

We note that the two additive terms on the right-hand side of (\ref{eq30}) represent the fitting performance and the model complexity, respectively.  

The quantity $\Lcn[\mo]$, also referred to as the corrected prediction loss, can be calculated from data. It serves as a surrogate for the out-sample prediction loss $\L[\mo]$, which is usually not computable. The in-sample loss $\Lhat[\mo]$ cannot be directly used as an approximation for $\L[\mo]$, because it uses the sample approximation twice: once in the estimation of $\thT$, and then in the approximation of $\E \l(\cdot,\th;\mo)$ using $E_n \l(\cdot,\th;\mo)$ (the law of large numbers).  
For example, in a nested model class, the largest model always has the least $\Lhat[\mo]$ (i.e., fits data the best). But as we discussed in the introduction, $\L[\mo]$ is typically decreasing first and then increasing as the dimension increases. 

\subsection{Asymptotic Analysis of the GTIC Procedure} \label{subsec:theorem1}

We need the following assumptions for asymptotic analysis. 

\begin{assumption} \label{ass:data}
	Data $\bm Z_i, i=1,\ldots,n$ are independent and identically distributed (i.i.d.). 	
\end{assumption}

Assumption~\ref{ass:data} is standard for theoretical analysis and some practical applications. In the context of regression analysis, it corresponds to the random design.  
In our technical proofs, it is possible to extend the assumption of i.i.d. to strong mixing~\cite{bradley1986basic}, which is more commonly assumed for time series data.

\begin{assumption}\label{ass:unique}
	For each model $\mo \in \Mo$, $\thT[\mo]$ (as was defined in (\ref{eq:tMLE})) is in the interior of the compact parameter space $\H[\mo]$, and for all $\v>0$ we have 
	\begin{align}
		&\liminf_{n \rightarrow \infty} \inf_{\mo \in \Mo} \biggl( 
		\inf_{\th \in \H[\mo]: \norm{\th-\thT[\mo]} \geq \v } \E \ell \bigl(\cdot, \th ; \mo \bigr)  - \E \ell \bigl(\cdot, \thT[\mo] ; \mo \bigr) \biggr)  
		\geq \eta_{\v} \nonumber
	\end{align}
	for some constant $\eta_{\v} > 0$ that depends only on $\v$. Moreover, we have 
	\begin{align}
		&\sup_{\mo \in \Mo}\sup_{\th \in \H[\mo]} \biggl| E_n \ell \bigl(\cdot, \th ; \mo \bigr) - \E \ell \bigl(\cdot, \th ; \mo \bigr) \biggr| \limp 0 , \nonumber 
	\end{align}
	as $n\rightarrow \infty$, and $\ell (\cdot, \thT[\mo] ; \mo )$ is twice differentiable in $\INT(\D)$ for all $n$, $\mo \in \Mo$. 
\end{assumption}

Assumption~\ref{ass:unique} is the counterpart of the separated mode and uniform law of large number conditions that have been commonly required in proving the consistency of maximum likelihood estimator for classical statistical models (see, e.g.,~\cite[Theorem 5.7]{van2000asymptotic}). The $\thT[\mo]$ can be interpreted as the oracle optimum under model $\mo$, or a ``projection'' point of the true data generating distribution onto the model $\mo$.  


\begin{assumption}\label{ass:converge} 
	There exist constants $\tau \in (0,0.5)$ and $\delta>0$ such that
	\begin{align}
		&\sup_{\mo\in \Mo}\sup_{\th \in \H[\mo] \cap B(\thT[\mo],\delta)} n^{\tau} \norm{E_n \bm \psi_{n}(\cdot, \th ; \mo) - \E \bm \psi_{n}(\cdot, \th ; \mo)} = O_p(1) \nonumber	. 
	\end{align}
	Additionally, the map $\th \mapsto \E \bm \psi_{n}(\cdot, \th; \mo)$ is differentiable at $\th \in \INT(\H[\mo])$ for all $n$ and $\mo \in \Mo$.
\end{assumption}

Assumption~\ref{ass:converge} is a weaker statement compared with the central limit theorem and its extension to Donsker classes in a classical (non-high dimensional) setting. In our high dimensional setting, the assumption ensures that each projected model $\thT[\mo]$ behaves regularly. It implicitly builds a relation between $\p$, the dimension of the largest candidate models, and sample size $n$. 
As was pointed out by an anonymous reviewer, it is technically possible to replace $n^\tau$ with a weaker requirement, say $n^{0.5}/(\log n)^\gamma$ for any constant $\gamma>0$.

\begin{assumption}\label{ass:eig} 
	There exist constants $c_1,c_2 > 0$ such that 
	\begin{align*}
	&\liminf_{n \rightarrow \infty} \min_{\mo \in \Mo} \textrm{eig}_{\min}(V_n(\thT ; \mo)) \geq c_1, \\
	&\limsup_{n \rightarrow \infty} \max_{\mo \in \Mo} \textrm{eig}_{\max}(V_n(\thT ; \mo)) \leq c_2 . 
	\end{align*} 
\end{assumption}


Assumption~\ref{ass:eig} assumes that the second derivative of the out-sample prediction loss has bounded eigenvalues at the optimum $\thT[\mo]$. 
The lower bound $c_1$ 
indicates that the loss function is strongly convex for all models, and the upper bound $c_2$ requires the loss functions to be reasonably smooth.
This assumption is used in our asymptotic analysis to ensure reasonable Taylor expansions up to the second order.

 
\begin{assumption}\label{ass:Lip}
	There exist fixed constants $r>0$, $\gamma > 1$, and measurable functions $m_n[\mo]: \D \rightarrow \Real^{+}\cup\{0\}$, $\bm z \mapsto m_n[\mo](\bm z)$ for each $\mo \in \Mo$, such that for all $n$ and $\th_1,\th_2 \in B(\thT[\mo],r)$, 
	\begin{align}
		&\norm{\bm \psi_{n}(\bm z, \th_1; \mo)-\bm \psi_{n}(\bm z, \th_2; \mo)} \leq m_n[\mo](\bm z) 	\norm{\th_1-\th_2}, \label{eq:Lip} \\ 
		&\E m_n[\mo] < \infty . \label{eq:exchangeable}
	\end{align}
	Moreover, we have 
	\begin{align}
		&\max\biggl\{ {\co \p^{2\gamma} \ \card(\Mo)^{\gamma} } ,\ \p \sqrt{\log \{ \p \card(\Mo) \}} \biggr\}\ 
		\times n^{-\tau} \biggl\lVert \sup_{\mo \in \Mo} m_n[\mo] \biggr\rVert_{\P} \rightarrow 0 .\label{eq:mCond} 
	\end{align}  

\end{assumption}

Assumption~\ref{ass:Lip} is a Lipschitz-type condition. Similar but simpler forms of this have been used in classical analysis of asymptotic normality \cite[Theorem 5.21]{van2000asymptotic}. We note that the condition (\ref{eq:mCond}) explicitly requires that the largest dimension $\p$ and the candidate size $\card(\Mo)$ do not grow too fast as $n$ goes to infinity. The condition (\ref{eq:mCond}) is used to bound the rate of convergence of the empirical process $\G \bm \psi_{n}(\cdot, \th ; \mo)$ in the vicinity of $\th=\thE[\mo]$. Similar conditions were often used to establish asymptotic results such as the Cram\'{e}r-Rao bound~\cite[Theorem 18]{ferguson2017course}.

%
%
%
%


\begin{assumption} \label{ass:secondOrderLLN}
There exists a constant $\delta>0$ such that
	\begin{align}
		&\sup_{\mo \in \Mo}\sup_{\th \in \H[\mo] \cap B(\thT[\mo],\delta)} \norm{\hat{J}_n(\th ; \mo) - J_n(\th ; \mo) } \limp 0,  
			\label{eq:consistJ} \\
		&\sup_{\mo \in \Mo}\sup_{\th \in \H[\mo] \cap B(\thT[\mo],\delta)} \norm{\hat{V}_n(\th ; \mo) - V_n(\th ; \mo) } \limp 0,  
			\label{eq:consistV} \\
		&\lim_{\v \rightarrow 0} \sup_{\mo \in \Mo} \sup_{\th \in \H[\mo] \cap B(\thT[\mo],\v)} \norm{V_n(\th ; \mo) - V_n(\thT ; \mo) } = 0. \label{eq:uniformV}
	\end{align}
\end{assumption}

Assumption~\ref{ass:secondOrderLLN} requires that the sample analogs of the matrices $J_n(\th ; \mo)$ and $V_n(\th ; \mo)$ are asymptotically close to the truth (in spectral norm) in a neighborhood of $\thT[\mo]$. In the classical setting, it is guaranteed by the law of large numbers (applied to each matrix element). 
{\co The above uniform convergence conditions may be further simplified using finite sample properties of random covariance-type matrices, e.g., a recent result in \cite{oliveira2013lower}.}
Assumption~\ref{ass:secondOrderLLN} also requires the continuity of $V_n(\th ; \mo)$ in a neighborhood of $\thT[\mo]$. 

We define  
	$$\bm w_n[\mo] = \frac{1}{\sqrt{n}} \sum_{i=1}^n \bm \psi_n(\bm z_i,\thT[\mo] ; \mo). $$ 
	Clearly, $\bm w_n[\mo]$ has zero mean and variance matrix $J_n(\thT[\mo]; \mo)$, and thus 
	\begin{align*}
		\E \norm{\bm w_n[\mo]}_{V_n(\thT[\mo]; \mo)^{-1}}^2 = \tr \bigl\{ V_n(\thT[\mo]; \mo)^{-1} J_n(\thT[\mo]; \mo) \bigr\}.	
	\end{align*}

\begin{assumption} \label{ass:risk}

Suppose that the following regularity conditions are satisfied.  	
\begin{align}
	 &\inf_{\mo \in \Mo} n^{2\tau} \R[\mo] \rightarrow \infty , \label{eq84} \\
	 &\sup_{\mo \in \Mo} \frac{\d}{n \R[\mo]} \rightarrow 0 .\label{eq93}
\end{align}
Moreover, 
there exists a fixed constant $m_1 > 0 $ such that 
\begin{align}
	\sum_{\mo \in \Mo}
	(n\R[\mo])^{-2m_1} &\E \bigl\{\l(\cdot, \thT[\mo] ; \mo)- 
	\E \l(\cdot, \thT[\mo]; \mo) \bigr\}^{2m_1} 
	 \rightarrow 0 , \label{eq85} 
\end{align}
there exists a fixed constant $m_2 > 0 $ such that 
\begin{align}
	&\sum_{\mo \in \Mo} 
	(n\R[\mo])^{-2m_2} \E\biggl[ \norm{\bm w_n[\mo]}_{V_n(\thT[\mo]; \mo)^{-1}}^2 
	- \tr \bigl\{ V_n(\thT[\mo]; \mo)^{-1} J_n(\thT[\mo]; \mo) \bigr\}\biggr]^{2m_2} \rightarrow 0, \label{eq89}
\end{align}
and there exists a fixed constant $m_3 > 0 $ such that 
\begin{align}
	\limsup_{n \rightarrow \infty} 
	\sum_{\mo \in \Mo} (n\R[\mo])^{-m_3} \{ 
	&\E \norm{\bm w_n[\mo]}^{m_3} + 
	\E \norm{\bm w_n[\mo]}^{2 m_3}	\} < \infty . \label{eq88}
\end{align}
 
\end{assumption}

In Assumption~\ref{ass:risk}, the conditions (\ref{eq84}), (\ref{eq93}) and (\ref{eq88}) indicate that the risks $\R[\mo]$ for all $\mo$ are not small so that the model class is virtually mis-specified. 
The assumptions of (\ref{eq85}) and (\ref{eq89}) are central moment constraints that control the regularity of loss functions. Similar conditions were often used to establish the asymptotic performance of model selection, for example \cite[Condition (2.6)]{shao1997asymptotic} and \cite[Condition (A.3)]{li1987asymptotic}. 

{\co Overall, Assumptions~1-7 ensure that the conditions for asymptotic normality in regular parametric models are supplemented with conditions ensuring a sufficient level of uniformity among models. }

%
%
%

\begin{theorem}\label{theorem:efficiency}
	Suppose that Assumptions~1-7 hold.
	Then the $\hat{\mo}_n$ selected by GTIC procedure is asymptotically efficient (in the sense of Definition~\ref{def:efficiency}).
\end{theorem}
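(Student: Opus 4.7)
The plan is to establish the uniform-in-model approximation $\sup_{\mo\in\Mo}|\Lcn[\mo]-\L[\mo]|/\R[\mo]\limp 0$ and to combine it with an oracle-type comparison. Writing $\mo^*_n=\argmin_{\mo\in\Mo}\L[\mo]$ and using that $\hat\mo_n$ minimizes $\Lcn$, we have $\Lcn[\hat\mo_n]\leq\Lcn[\mo^*_n]$, which after rearrangement sandwiches $\L[\hat\mo_n]\leq\L[\mo^*_n](1+o_p(1))+o_p(\R[\hat\mo_n])$. A complementary concentration step, showing that $\L[\mo]/\R[\mo]\limp 1$ for the models that can plausibly be selected, then converts this into the ratio statement $\L[\hat\mo_n]/\min_{\mo\in\Mo}\L[\mo]\limp 1$ demanded by Definition~\ref{def:efficiency}.

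The technical heart is a second-order Taylor expansion of both $\L[\mo]$ and $\Lhat[\mo]$ around the projection point $\thT[\mo]$. Assumption~\ref{ass:unique} first forces $\thE[\mo]\to\thT[\mo]$ uniformly in $\mo$; combining the empirical-process rate in Assumption~\ref{ass:converge}, the strong convexity in Assumption~\ref{ass:eig}, and the Lipschitz bound in Assumption~\ref{ass:Lip} then yields a Bahadur-type representation
\begin{equation*}
\sqrt{n}\bigl(\thE[\mo]-\thT[\mo]\bigr) = -V_n(\thT[\mo];\mo)^{-1}\bm w_n[\mo] + r_n[\mo],
\end{equation*}
with $\sup_{\mo}\|r_n[\mo]\|$ uniformly negligible at the rate dictated by (\ref{eq:mCond}). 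Using $\E\bm\psi_n(\cdot,\thT;\mo)=0$ and the first-order condition $E_n\bm\psi_n(\cdot,\thE;\mo)=0$, substitution into Taylor expansions of $\L[\mo]=\E\l(\cdot,\thE;\mo)$ and $\Lhat[\mo]=E_n\l(\cdot,\thE;\mo)$ yields
\begin{equation*}
\L[\mo]-\Lhat[\mo] = (\E-E_n)\l(\cdot,\thT[\mo];\mo) + n^{-1}\bigl\|\bm w_n[\mo]\bigr\|^2_{V_n(\thT[\mo];\mo)^{-1}} + \text{remainder}.
\end{equation*}
The GTIC penalty $n^{-1}\tr(\hat V_n^{-1}\hat J_n)$ is engineered to match the expectation of the quadratic term: since $\E\|\bm w_n[\mo]\|^2_{V_n^{-1}}=\tr(V_n^{-1}J_n)$ and Assumption~\ref{ass:secondOrderLLN} supplies uniform consistency of $\hat V_n$ and $\hat J_n$, the penalty cancels the leading deterministic contribution and leaves
\begin{equation*}
\Lcn[\mo]-\L[\mo] = (E_n-\E)\l(\cdot,\thT[\mo];\mo) - n^{-1}\bigl\{\|\bm w_n[\mo]\|^2_{V_n^{-1}}-\tr(V_n^{-1}J_n)\bigr\} + \text{remainder}.
\end{equation*}

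Uniform-in-$\mo$ control of this residual then follows from the moment conditions in Assumption~\ref{ass:risk} through Markov's inequality and a union bound over $\Mo$: (\ref{eq85}) disposes of the centered empirical-process term, (\ref{eq89}) disposes of the centered quadratic form, (\ref{eq88}) together with (\ref{eq93}) absorbs the higher-order Taylor remainder, and (\ref{eq84}) guarantees that $\R[\mo]$ dominates the $n^{-2\tau}$ rate inherited from Assumption~\ref{ass:converge}. I expect the main obstacle to be the uniformity in $\mo$ of the Bahadur step itself: because $\Mo$ and each dimension $\d$ are allowed to grow with $n$, classical pointwise asymptotic normality no longer suffices, and one must localize $\bm\psi_n$ inside a shrinking ball around $\thT[\mo]$, use the Lipschitz envelope of Assumption~\ref{ass:Lip} to bound empirical-process increments in that ball, and spend the explicit growth budget (\ref{eq:mCond}) to pay for the simultaneous union over $\Mo$ and the $\p$ coordinate directions. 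A secondary difficulty is upgrading $\sup_{\mo}|\Lcn[\mo]-\L[\mo]|/\R[\mo]\limp 0$ to the ratio $\L[\hat\mo_n]/\min_{\mo}\L[\mo]$; this should again flow from the same Taylor expansion together with conditions (\ref{eq85})--(\ref{eq89}), which imply $\L[\mo]/\R[\mo]\limp 1$ uniformly over the sub-collection of models whose $\R[\mo]$ is comparable to $\min_{\mo}\R[\mo]$.
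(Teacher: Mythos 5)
Your proposal is correct and follows essentially the same route as the paper's own proof: the same Taylor expansions of $\L[\mo]$ and $\Lhat[\mo]$ around $\thT[\mo]$, the same uniform Bahadur-type representation (the paper's Lemma~\ref{lemma:normality}), the same cancellation of the GTIC penalty against $\E \norm{\bm w_n[\mo]}_{V_n(\thT[\mo];\mo)^{-1}}^2 = \tr\{V_n(\thT[\mo];\mo)^{-1}J_n(\thT[\mo];\mo)\}$, and the same allocation of the moment conditions (\ref{eq85}), (\ref{eq89}), (\ref{eq88}) to the residual terms via Markov's inequality and a union bound over $\Mo$, culminating in $\sup_{\mo\in\Mo}\L[\mo]/\R[\mo] \limp 1$. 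You also correctly pinpoint the paper's main technical hurdle, namely the uniform-in-$\mo$ localization of the empirical process over growing $\Mo$ and $\p$ using the Lipschitz envelope and the growth budget (\ref{eq:mCond}), which the paper handles through its bracketing-entropy machinery (Lemmas~\ref{lemma:vol}--\ref{lemma:locUnifConv}).
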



Classical asymptotic analysis for general parametric models with i.i.d. observations typically relies on a type of uniform convergence of empirical process around $\thT[\mo]$ within a fixed parameter space. 
Because our functions are vector-valued with dimension depending on the sample size $n$, we cannot directly use state-of-the-art technical tools such as those in \cite[Theorem 19.28]{van2000asymptotic}. 
The classical proof by White~\cite{white1982maximum} (in proving asymptotic normality in misspecified class) cannot be directly adapted, either, for parameter spaces that depend on $n$. 
On the other hand, though asymptotic analysis for criteria such as AIC, $C_p$, CV, GIC often consider models that depend on $n$ (see, e.g.,~\cite{shibata1981optimal,li1987asymptotic,shao1997asymptotic,yang2005can}, it is often studied in the context of fixed-design regression models, so the technical tools there cannot be directly applied for our purpose. 

Some new technical tools are needed in our proof. 
Here we sketch some technical ideas in the proof.
We first prove that $\thE[\mo]$ is $n^\tau$-consistent (instead of the classical $\sqrt{n}$-consistency).
We then prove the first key result, namely Lemma~\ref{lemma:locUnifConv}, that states a type of local uniform convergence. Note that its proof is nontrivial as both the empirical process and $\thE$ depend on the same observed data. Our technical tools resemble those for proving a Donsker class, but the major difference is that our model dimensions depend on $n$. 
We then prove the second key lemma, Lemma~\ref{lemma:normality}. It directly leads to the asymptotic normality of maximum likelihood estimators in the classical setting. It is somewhat interesting to see that the proof of Lemma~\ref{lemma:normality} does not require the $\sqrt{n}$-consistency of $\thE[\mo]$, which usually does not hold in high dimensional settings. 
 

\subsection{Example}

Theorem~\ref{theorem:efficiency} applies to general parametric model classes, where assumptions can often be simplified. 
We shall use regression models as an example of applying Theorem~\ref{theorem:efficiency}. 
Suppose that the response variable is written as $Y = \mu(\bm X) + \v$, where $\v$ is a random noise with mean zero and variance $\sigma^2$, and $\mu(\bm X)$ is a possibly nonlinear function of $\p$ predictors $\bm X=[X_1, \ldots, X_{\p}]^\T$. In linear models, data analysts assume that $\mu$ is a linear function of $\bm X$ in the form of $\mu = \beta_1 X_1 + \cdots  +\beta_{\p} X_{\p}$, where $\p$ may or may not depend on the sample size $n$. We sometimes write $\mu(\bm X)$ as $\mu$ for brevity. 
For simplicity, we assume that $\sigma$ is known, and $\bm X$ is a random vector independent with $\v$. Also assume that $E(Y)=0$ and $E(X_i)=0$ ($i=1,\ldots,\p$).
The observed data are $n$ independent realizations of $Z=(Y,X_1,\ldots,X_{\p})$. The unknown parameters are $\bm \theta = (\beta_1,\ldots,\beta_{\p})$. The model class, denoted by $\Mo$, consists of candidate models represented by $\mo \subseteq \{1,\ldots,\p\}$, i.e. $\mu(\bm X) = \sum_{i \in \mo} \beta_i X_i$. 

In regression, it is common to use the quadratic loss function
  \begin{align}
    \l(\bm z, \th; \mo) = \biggl(y - \sum_{j \in \mo} \beta_j x_j\biggr)^2 - \sigma^2 \nonumber
  \end{align} 
  for $\th \in \H[\mo]$. {\co The subtraction of $\sigma^2$ allows for better comparison of competing models.}
  Note that the population loss is 
  \begin{align}
    \E \l(\bm z, \th; \mo) = \E \biggl(\mu - \sum_{j \in \mo} \beta_j x_j\biggr)^2 . \label{e1}
  \end{align}
Suppose that $\thT$ is defined as in (\ref{eq:tMLE}). We define $\S_{xx}$ to be the covariance matrix whose $(i,j)$-th element is $\E (X_i X_j)$,  $\S_{x\mu}$ to be the column vector whose $i$-th element is $\E (X_i \mu)$, and $\S_{\mu \mu} = \E(\mu^2)$. 
  We similarly define $\S_{xx}[\mo]$, $\S_{x\mu}[\mo]$, $\bm X[\mo]$ which are the covariance matrix/vectors restricted to model $\mo \in \Mo$.   
  Simple calculations show that $\thT[\mo] = (\S_{xx}[\mo])^{-1} \S_{x\mu}[\mo]$ for $\H(\mo) = \mathbb{R}^{\d}$, and (\ref{e1}) may be rewritten as 
  \begin{align}
    \E \l(\bm z, \th; \mo) 
    &= \E \l(\bm z, \thT[\mo]; \mo) + \norm{\th-\thT[\mo]}_{\S_{xx}[\mo]}^2 \nonumber \\
    &= \bigl( \S_{\mu \mu}  - \S_{\mu x}[\mo] \S_{xx}[\mo]^{-1} \S_{x \mu}[\mo] \bigr) + 
     \norm{\th-\thT[\mo]}_{\S_{xx}[\mo]}^2. \label{eq:tradeoff}  
  \end{align}
  The decomposition in (\ref{eq:tradeoff}) has a nice interpretation in terms of bias-variance tradeoff. The first term is the $L_2(\P)$-norm of the orthogonal complement of $\mu$ projected to the linear span of covariates, or the  minimal possible loss offered by the specified model $\mo$. Clearly, it is zero if $\mo$ is well-specified, and nonzero otherwise. The second term represents the variance of estimation.  
Evaluating $\hat{V}_n(\th; \mo)$ and $\hat{J}_n(\th; \mo)$ in this specific case, we obtain 
\begin{align*}
	\hat{V}_n(\th; \mo) = 2 E_n (\bm X[\mo]  \bm X[\mo]^\T), \quad
	\hat{J}_n(\th; \mo)	 = 4  E_n (\bm X[\mo]  \bm X[\mo]^\T R[\mo]^2 ),\quad
	R[\mo] \de Y - \thE[\mo]^\T \bm X[\mo] .
\end{align*}
Note that when $R[\mo] $ is close to the independent noise term, then $\E(R[\mo]^2 ) \approx \sigma^2$ and the GTIC penalty in (\ref{eq30}) is around $n^{-1} (2 d \sigma^2)$ which approximates the AIC and Mallows' $C_p$ method.
Theorem~\ref{theorem:efficiency} implies the following corollary. 
In verifying the previous assumptions such as Assumption~\ref{ass:unique} for this corollary, we used the fact that $\norm{\thT[\mo]}=  \norm{ (\S_{xx}[\mo])^{-1} \S_{x\mu}[\mo] } = \norm{\S_{x\mu}[\mo] }\leq c^2 \p$, and the least squares estimates fall into $\{\bm \th : \norm{\bm \th} \leq 2c \sqrt{\p}\}$ with high probability (due to the concentration inequalities for bounded $X_i$ and $\mu$). 
It is possible to relax the conditions by a more sophisticated verification of assumptions.

\begin{corollary}\label{coro:linear}
	Assume that $|\mu|$ and $|X_i|$ ($i=1,\ldots,\p$) are bounded by a constant $c$ that does not depend on $n$. Suppose the following conditions hold, then the $\hat{\mo}_n$ selected by GTIC procedure is asymptotically efficient.\\
	1) $X_1,\ldots,X_{\p}$ are independent with zero mean and unit variance for all $n$; \\
	2) $\p = o( n^{w})$, where $w<1/8$; \\
	3) $\inf_{\mo\in \Mo}\R[\mo] > n^{-\zeta}$, where $\zeta < 1-2w$; \\
	4) 
	{\co $\card(\Mo) = O(n^{\nu})$, where $\nu < 1/2-4w $. }
\end{corollary}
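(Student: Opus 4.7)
The plan is to verify the seven assumptions of Theorem~\ref{theorem:efficiency} in the present regression setting and then invoke the theorem. The key simplification is that, by the independence and unit-variance hypotheses, $\S_{xx}[\mo] = I_{\d}$, whence $\thT[\mo] = \S_{x\mu}[\mo]$ with $\norm{\thT[\mo]} = O(\sqrt{\p})$ (each entry of $\S_{x\mu}[\mo]$ is bounded by $c^2$), and $V_n(\th; \mo) = 2 I_{\d}$ is $\th$-free. Fixing the compact parameter space $\H[\mo] = \{\bm\th : \norm{\bm\th} \leq 2c \sqrt{\p}\}$ (which contains $\thT[\mo]$ and, by concentration of $E_n(\bm x[\mo] \bm x[\mo]^\T)$, the estimator $\thE[\mo]$ with probability tending to one), Assumption~\ref{ass:data} is by construction, Assumption~\ref{ass:eig} is immediate (eigenvalues are exactly $2$), Assumption~\ref{ass:unique} follows from strong convexity of the quadratic loss together with a uniform LLN over $\H[\mo]$ obtained via entrywise Hoeffding plus an $\epsilon$-net, and Assumption~\ref{ass:secondOrderLLN} reduces to uniform convergence of $E_n(\bm x[\mo] \bm x[\mo]^\T)$ and $E_n(\bm x[\mo] \bm x[\mo]^\T R[\mo]^2)$, both $\th$-free matrices that concentrate at rate $o(1)$ since $\p = O(n^w)$ with $w < 1/8$.

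The substantive work is in Assumptions~\ref{ass:converge},~\ref{ass:Lip}, and~\ref{ass:risk}. The identity
\begin{align*}
    \bm\psi_{n}(\bm z,\th_1;\mo) - \bm\psi_{n}(\bm z,\th_2;\mo) = 2 \bm x[\mo] \bm x[\mo]^\T (\th_1 - \th_2)
\end{align*}
supplies the deterministic envelope $m_n[\mo](\bm z) = 2\norm{\bm x[\mo]}^2 \leq 2c^2 \p$, so $\norm{\sup_\mo m_n[\mo]}_\P = O(n^w)$. Condition (\ref{eq:mCond}) becomes $n^{(2\gamma+1)w + \gamma\nu + w - \tau} \to 0$; sending $\gamma \downarrow 1$ this imposes the lower bound $\tau > 3w + \nu$.

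For Assumption~\ref{ass:converge}, the centered score decomposes as
\begin{align*}
    (E_n - \E)\bm\psi_n(\cdot, \th; \mo) = 2\{(E_n - \E)(\bm x[\mo] \bm x[\mo]^\T)\} \th - 2(E_n - \E)(y \bm x[\mo]),
\end{align*}
and bounded-entry matrix Bernstein inequalities, combined with a union bound over the $O(n^\nu)$ models, give $\norm{(E_n - \E)(\bm x[\mo] \bm x[\mo]^\T)} = O_p(\sqrt{\p \log n / n})$ and $\norm{(E_n - \E)(y\bm x[\mo])} = O_p(\sqrt{\p \log n / n})$ uniformly in $\mo$. Since $\norm{\bm\th} \leq \norm{\thT[\mo]} + \delta = O(\sqrt{\p})$ on the $\delta$-ball, the centered score has norm $O_p(\p \sqrt{\log n / n}) = O_p(n^w \sqrt{\log n / n})$, so Assumption~\ref{ass:converge} holds for any $\tau < 1/2 - w$. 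The admissible interval $(3w + \nu,\, 1/2 - w)$ for $\tau$ is nonempty exactly when $\nu < 1/2 - 4w$, which is the fourth hypothesis of the corollary.

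Finally, Assumption~\ref{ass:risk} follows from routine moment bounds for bounded random variables. The decomposition (\ref{eq:tradeoff}) combined with $\R[\mo] > n^{-\zeta}$ yields (\ref{eq84}) whenever $\tau > \zeta/2$ and (\ref{eq93}) whenever $w + \zeta < 1$; both are implied by $\zeta < 1 - 2w$, and the first is compatible with the window above because $\zeta/2 < 1/2 - w$. For conditions (\ref{eq85}), (\ref{eq89}), and (\ref{eq88}), boundedness of $|y|$, $|X_i|$, and $\norm{\thT[\mo]} = O(\sqrt{\p})$ renders each summand polynomially bounded in $\p$; standard moment inequalities for sums of i.i.d.\ bounded zero-mean vectors yield $\E \norm{\bm w_n[\mo]}^{2m} = O(\d^{m})$ for every integer $m$, and each sum over $\card(\Mo) = O(n^\nu)$ models is of order $n^{\nu - 2m(1 - \zeta - w)}$, which vanishes once $m$ is chosen large. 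The principal obstacle throughout is bookkeeping the exponents so that the lower bounds $\tau > 3w + \nu$ and $\tau > \zeta/2$ and the upper bound $\tau < 1/2 - w$ are simultaneously feasible — this is exactly what the corollary's numerical hypotheses $w < 1/8$, $\zeta < 1 - 2w$, and $\nu < 1/2 - 4w$ guarantee.
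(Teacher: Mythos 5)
Your proposal is correct and follows essentially the same route as the paper's own proof: verify Assumptions 2--7 with $\H[\mo]$ a ball of radius $O(\sqrt{\p})$, the Lipschitz envelope $m_n[\mo] = O(\p)$, and a choice of $\tau$ in the window $\bigl(\max\{3w+\nu,\,\zeta/2\},\ 1/2 - w\bigr]$, whose nonemptiness is exactly the hypothesis $\nu < 1/2 - 4w$. Your only deviations are cosmetic --- matrix Bernstein with a union bound where the paper computes variances directly, and large moment orders $m$ where the paper takes $m_1=m_2=m_3=1$ --- plus a harmless typo in the exponent for condition (\ref{eq:mCond}) (it should read $2\gamma w + \gamma\nu + w - \tau$), which does not affect your correct conclusion $\tau > 3w + \nu$.
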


\section{Sequential Model Expansion} \label{sec:expand}

As explained in the introduction, 
in terms of predictive power, a model in a misspecified model class could be determined to be unnecessarily large, suitable, or inadequately small, depending on a specific sample size (see Fig.~\ref{fig_intro}). 
A realistic learning procedure thus requires models of different complexity levels as more data become available.

Throughout this section, we shall use $T$ (instead of the previously used $n$) to denote sample size, and subscript $t$ as the data index, in order to emphasize the sequential setting. 
 

%

\subsection{Discussion}
We have addressed the selection of an efficient model for a given number of observations.
In many practical situations, data are sequentially observed. A straightforward model selection is to repeatedly apply the GTIC procedure upon arrival of data. 
However, in a sequential setting, the following issue naturally arises:


\textit{Suppose that we successively select a model and use it to predict at each time step. The path of the historically selected models may fluctuate a lot. Instead, it is more appealing (either statistically or computationally) to force the selected models to evolve gradually. 
}

To address the above challenge, we first propose a concept referred to as the \textit{graph-based} expert tracking, which extends some classical online learning techniques (Algorithm~\ref{algo:trackGraphOrigin}). 
Motivated by the particular path graph $1 \rightarrow 2 \rightarrow \cdots N$, where $1,2,\ldots,N$ index the candidate models, we further propose a model expansion strategy (Algorithm~\ref{algo:trackExpert}), where each candidate model and its corrected prediction loss can be regarded respectively as an expert and loss.  

The proposed algorithm can be used for online prediction, which ensures not only statistically reliable results but also simple computation.  
Specifically, we propose a predictor that has cumulative out-sample prediction loss (over time) close to the following optimum benchmark:
\begin{align}
	\min_{\size(i_1,\ldots,i_T)\leq k, \ i_1,\ldots, i_T \in \{1,\ldots,N\}} \sum_{t=1}^T \L[\mo_{i_t}] .
\end{align}
where the size of a sequence $\size(i_1,\ldots,i_T)$ is defined as the number of $t$'s such that $i_t \neq i_{t+1}$.
In other words, the minimization is taken over all tuples $(i_1,\ldots,i_T)$ that have at most $k$ switches and that are restricted to the chain $1 \rightarrow 2 \rightarrow \cdots$. For example, $(i_1,\ldots,i_5)=(1,2,2,3,3)$. 
In the above formulation, $i_t$ and $k$ respectively mean the index of the model chosen to predict at time step $t$, and the number of switches within $T$ time steps.

\subsection{Tracking the Best Expert with Graphical Constraints}

In this subsection, we propose a novel graph-based expert tracking technique that motivates our algorithm in the following subsection.
The discussion may be interesting on its own right, as it includes the state-of-art expert tracking framework as a special case (when the underlying graph is fully-connected/complete). 

Suppose there are $N$ experts. At each discrete time step $t=1,2,\ldots,T$, each expert gives its prediction, after which the environment reveals the truth $\yt\in\yy$. In this subsection, with a slight abuse of notation, we shall also use $\ls$ to denote loss functions in the context of online learning. The performance of each prediction is measured by a loss function $\ls: \{1,2,\ldots,N\}\times\yy\rightarrow\RR$.  
A smaller loss indicates a better prediction. 
In light of the model expansion we shall introduce in the next subsection, each $i=1,\ldots,N$ represents a model, and $\ls(i,\bm z_t)$ is the prediction loss of model $i$ which is successively re-estimated using $\bm z_1,\ldots,\bm z_{t}$ at time step $t$.  

In order to aggregate all the predictions that the experts make, we maintain a weight value for each expert and update them upon the arrival of each new data point based on the qualities of the predictions. We denote the weight for expert $i\in\{1,\ldots,N\}$ at time $t$ as $w_{i,t}$, and the normalized version as $W_{i,t}$. 
The goal is to optimally update the weights for a better prediction, which is measured by the cumulative loss minus the best achievable (benchmark) loss. 
This measure is often called ``regret'' in the online learning literature~\cite{stoltz2005internal,cesa2006prediction,stoltz2007learning}. 
The regret is a relevant criterion of evaluating the predictive performance in sequential settings since the model and model parameters have to be adjusted on a rolling basis as new data arrives, and a selected model at a time step $t_1$ may not be suitable at another time step $t_2$. 
If the benchmark in the regret is defined as the minimum cumulative loss achieved by a single expert in hindsight, namely
$ 
	\min_{1\leq i^* \leq N} \sum_{t=1}^T l(i^*,\yt) 
$, 
then it is standard to apply the exponential re-weighting procedure which produces some desirable regret bound~\cite[Chapter 2]{cesa2006prediction}. In many cases the best performing expert can be different from one time segment to another, motivating the benchmark 
$$ 
	\min_{\size(i_1,\ldots,i_T)\leq k, \, i_1,\ldots,i_T \in\{1,\ldots, N\} } \sum_{t=1}^T l(i_t,\yt) 
$$ 
where $k$ denotes the maximum number of switches of the best experts in hindsight. 
In this scenario, the fixed share algorithm~\cite[Chapter 5]{cesa2006prediction} can be a good solution with guaranteed regret bound. We consider the following problem setting that aims to significantly reduce computational costs. 

The best performing expert is restricted to switch according to a \textit{directed graph}, $G=(V,E)$ (without self-loops), with $V=\{1,\ldots,N\}$ denoting the set of nodes (representing experts) and $E$ denoting the set of directed edges. At each time point, the best performing expert can either stay the same or jump to another node which is directly connected from the current node. Let 
\begin{align}
	\bt_{ij}=\ind{\exists(i,j)\in E}, \label{eq101}
\end{align}
which is $1$ if there is a directed edge $(i,j)$ on the graph, and $0$ otherwise. Let 
\begin{align}
	\bt_{i}=\sum_{j=1}^N \bt_{ij}, \label{eq201}
\end{align}
which is the out-degree of the node $i$. In addition, we assume that $\max_{i\in{1,\ldots,N}} \bt_i \leq \deg$, where $0<\deg < N$.

We propose Algorithm~\ref{algo:trackGraphOrigin} to follow the best expert with the graphical transitional constraints.
We use a special prior $w_{i,0}$ here to motivate content in the next subsection. It is not difficult to extend our discussion to more general priors here. 
The classical fixed-share algorithm can be seen as a special case when the graph is complete. The advantage of using the graph-based expert learning is to reduce the computational cost and to obtain a tighter error bound, as shown in the following Theorem~\ref{thm:graphexpert}.
A regret bound will be derived that only depends on the graph degree $D$ instead of the number of experts $N$.
To the best of the authors' knowledge, the framework concerning dynamic regret with graph constraints stated here has not been studied before. 

\begin{algorithm}[tb]
\vspace{-0.0 cm}
\caption{Tracking the best expert with graphical transitional constraints}
\label{algo:trackGraphOrigin}
\begin{algorithmic}[1]
\INPUT Learning rate $\eta>0$, sharing rate $0<\kappa<1/\deg$
\OUTPUT $\bm p_t=[p_{t,1},\ldots, p_{t,N}]^\T$ (predictive distribution over the active models) for each $t=1, \ldots,T$
\STATE Initialize $w_{1,0}=1$ $w_{i,0}=0$ for all $i\in\{2,\ldots,N\}$ 
\FOR {$t = 1 \to T$ } 
	\STATE Calculate the predictive distribution $p_{i,t}=w_{i,t-1}/\sum_{j=1}^N w_{j,t-1}$, for each $i\in\{1,\ldots,N\}$
	\STATE Read $\yt$, and compute $v_{i,t}=w_{i,t-1}\exp(-\eta\cdot l(i,\yt))$, for each $i\in\{1,\ldots,N\}$
	\STATE Let $w_{i,t}=\kappa \sum_{j=1}^N\bt_{ji} v_{j,t}+(1-\kappa \bt_{i}) v_{i,t}$ for each $i\in\{1,\ldots,N\}$, where $\bt_{ji}, \bt_{i}$ are defined in (\ref{eq101}), (\ref{eq201})
\ENDFOR
\end{algorithmic}
 \vspace{-0.0cm}%
\end{algorithm}

\begin{theorem}
\label{thm:graphexpert}
Suppose the loss function takes values from $[0,1]$. For all $T\geq 1$, the output of the algorithm in Algorithm~\ref{algo:trackGraphOrigin} satisfies 
\begin{align*}
&\sum_{t=1}^T \biggl(\sum_{i=1}^N l(i,\yt)p_{i,t}-l(i_t,\yt)\biggr) 
\leq \frac{1}{\eta}(T-k-1)\log\frac{1}{1-\kappa \deg}+\frac{1}{\eta}k\log\frac{1}{\kappa}+\eta\frac{T}{8}
\end{align*}
for all expert sequence $(i_1,i_2,\ldots,i_T)$ and all observation sequence $(\bm z_1, \bm z_2,\ldots,\yt)$, given that $(i_1,i_2,\ldots,i_T)$ has only transitions following directed paths in graph $G$ and $\size(i_1,i_2,\ldots,i_T)\leq k$.

\end{theorem}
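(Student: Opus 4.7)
The proof follows the classical potential-function approach for exponentially weighted averages, adapted to the graph-constrained sharing step. Define the log-potential $\Phi_t = \log\sum_{i=1}^N w_{i,t}$; the initialization $w_{1,0}=1$, $w_{i,0}=0$ ($i\geq 2$) gives $\Phi_0=0$. The plan is to bound $\Phi_T$ from above by the algorithm's mixed loss via Hoeffding's lemma, bound it from below along an arbitrary admissible benchmark sequence via a graph-aware one-step recursion, and then combine.

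\textbf{Upper bound.} A direct calculation using $\sum_i \bt_{ji}=\bt_j$ shows that Step~5 of Algorithm~\ref{algo:trackGraphOrigin} preserves total mass, so $\sum_i w_{i,t}=\sum_i v_{i,t}=\sum_i w_{i,t-1}e^{-\eta\ls(i,\yt)}$. Hence
\begin{equation*}
\Phi_t-\Phi_{t-1}=\log\sum_{i=1}^N p_{i,t}e^{-\eta\ls(i,\yt)}\leq -\eta\sum_{i=1}^N p_{i,t}\ls(i,\yt)+\frac{\eta^2}{8},
\end{equation*}
where the inequality is Hoeffding's lemma applied to the $[0,1]$-valued random variable $\ls(\cdot,\yt)$ under the distribution $(p_{1,t},\ldots,p_{N,t})$. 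Telescoping gives $\Phi_T\leq -\eta\sum_{t=1}^T\sum_{i=1}^N p_{i,t}\ls(i,\yt)+\eta^2 T/8$.

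\textbf{Lower bound.} Fix any admissible benchmark sequence $(i_1,\ldots,i_T)$ with $\size(i_1,\ldots,i_T)\leq k$ that respects the graph. Using mass preservation a second time, $\sum_i w_{i,T}=\sum_i v_{i,T}\geq v_{i_T,T}=w_{i_T,T-1}e^{-\eta\ls(i_T,\yt_T)}$, so $\Phi_T\geq \log w_{i_T,T-1}-\eta\ls(i_T,\yt_T)$. For $t=1,\ldots,T-1$, I would establish the one-step recursion
\begin{equation*}
w_{i_{t+1},t}\geq c_t\, w_{i_t,t-1}\exp\bigl(-\eta\ls(i_t,\yt_t)\bigr)
\end{equation*}
by retaining a single term of the Step~5 update: when $i_{t+1}=i_t$, keep $(1-\kappa \bt_{i_t})v_{i_t,t}$ and use $\bt_{i_t}\leq\deg$ to get $c_t=1-\kappa\deg$; when $i_{t+1}\neq i_t$, the edge $(i_t,i_{t+1})\in E$ exists by the path hypothesis so $\bt_{i_t,i_{t+1}}=1$ and $c_t=\kappa$. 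Iterating from $t=T-1$ down to $t=1$ and using $w_{i_1,0}=1$ (forced since the degenerate initialization requires $i_1=1$), one obtains
\begin{equation*}
\Phi_T\geq (T-k-1)\log(1-\kappa\deg)+k\log\kappa-\eta\sum_{t=1}^T\ls(i_t,\yt_t),
\end{equation*}
where the counts $T-1-k$ and $k$ among $c_1,\ldots,c_{T-1}$ come directly from $\size(i_1,\ldots,i_T)$ on the transitions $i_t\to i_{t+1}$.

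\textbf{Combining and main obstacle.} Chaining the two bounds on $\Phi_T$, dividing through by $\eta>0$, and rearranging delivers the regret inequality of Theorem~\ref{thm:graphexpert}. The hardest part is the boundary bookkeeping on the lower side: it is essential to invoke the mass-preservation identity $\sum_i w_{i,T}=\sum_i v_{i,T}$ to bypass the final sharing operation, which is what produces exactly $T-1$ coefficient factors (instead of $T$) and yields the precise counts $T-k-1$ and $k$ in the theorem. The graph enters only through two places---the degree bound $\deg$ replaces the $N-1$ of the classical complete-graph fixed-share bound and sharpens the rate, and the path hypothesis guarantees $\bt_{i_t,i_{t+1}}=1$ so the switch factor is the clean $\kappa$---while the standing condition $\kappa\deg<1$ keeps $\log(1-\kappa\deg)$ well-defined.
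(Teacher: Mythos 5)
Your proof is correct, but it takes a genuinely different route from the paper. The paper follows the compound-expert strategy of Cesa-Bianchi and Lugosi: it defines an exponentially re-weighted forecaster over all expert sequences $(i_1,\ldots,i_T)$ with a Markov prior $w'_0$ supported on graph-admissible paths, proves by induction (their Lemma~\ref{lemma:graph1}) that the marginals of this forecaster coincide with the weights maintained by Algorithm~\ref{algo:trackGraphOrigin}, invokes the standard exponentially-weighted regret bound for the compound forecaster (their Lemma~\ref{lemma:graph2}), and finishes with the prior lower bound $w'_0(i_1,\ldots,i_T)\geq(1-\kappa\deg)^{T-k-1}\kappa^k$. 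You instead run a direct potential argument on the algorithm's own weights: the mass-preservation identity $\sum_i w_{i,t}=\sum_i v_{i,t}$ (via $\sum_i \bt_{ji}=\bt_j$) plays the role of the paper's marginalization consistency, Hoeffding's lemma per step replaces the appeal to the compound-expert bound, and your one-step recursion $w_{i_{t+1},t}\geq c_t\,w_{i_t,t-1}e^{-\eta \ls(i_t,\yt)}$ with $c_t\in\{1-\kappa\deg,\kappa\}$ reproduces, factor by factor, exactly the product the paper obtains all at once from the prior. Your boundary bookkeeping is careful and correct: bypassing the final share step via mass preservation yields $T-1$ coefficient factors and hence the precise exponent $T-k-1$, matching the theorem (a naive $w_{i_T,T}\geq(1-\kappa\deg)v_{i_T,T}$ would lose a factor). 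What each approach buys: the paper's reduction makes transparent that the algorithm is an efficient implementation of Bayesian aggregation over exponentially many compound experts, and lets it cite an off-the-shelf bound; yours is shorter, self-contained, avoids the induction over marginalized weights, and isolates the graph's role in two clean inequalities ($\bt_{i_t,i_{t+1}}=1$ along edges, $\bt_i\leq\deg$ with $\kappa\deg<1$). One shared caveat, not a gap relative to the paper: for sequences with strictly fewer than $k$ switches, both your product bound and the paper's prior bound dominate the stated $(1-\kappa\deg)^{T-k-1}\kappa^k$ only when $\kappa\leq 1/(1+\deg)$; this is harmless for the small optimized $\kappa$ used downstream, and the paper's own proof has the identical step.
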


The left-hand side of the above inequality is referred to as \textit{regret}. 
In order to minimize the above regret bound with respect to $(\kappa,\eta)$, we first take derivative with respect to the sharing rate $\kappa$ and solve the first-order equation to obtain $\kappa=k/\bigl((T-1)\deg\bigr)$. Then the bound becomes $S/\eta+\eta T/8$. We further minimize it over the learning rate $\eta$ to obtain $\eta=\sqrt{8S/T}$. The corresponding minimal bound is calculated to be $\sqrt{TS/2}$. 
Here  
$$S = (T-1)H(k/(T-1))+k\log \deg,$$
and $H(\cdot)$ is the binary entropy function defined by $H(x) \de -x\log x-(1-x)\log(1-x)$ for $x\in(0,1)$, $H(0)=H(1)=0$.
The $T$ is interpreted as a pre-determined stopping time when the performance of the data-driven algorithm is to be contrasted with that of the optimal graph search (with $k$ node switches). 
In particular, for small $k/T$, the average of the regret is at the order of 
\begin{align*}
	\frac{1}{T } \sum_{t=1}^T \biggl(\sum_{i=1}^N l(i,\yt)p_{i,t}-l(i_t,\yt)\biggr) 
	\leq \frac{\sqrt{TS/2}}{T}
	\sim T^{-1/2} \cdot \biggl\{T \cdot \frac{k}{T} \cdot \log T + k \log D \biggr\}^{1/2}  
	\sim (k/T)^{1/2} \log (DT)
\end{align*}

It is interesting to see that with graphical constraint, the regret bound does not depend on $N$, but on the maximum out-degree $\deg$ instead. Thus, the bound can be tight even when $N$ grows exponentially in $T$, as long as $\deg \ll N $ (i.e., sparse graph).

\subsection{Algorithm for Sequential Model Expansion}

The new online learning theory proposed in the last subsection is motivated by graph-based expert tracking.
Intuitively speaking, instead of using the exponentially updated weights directly, each expert borrows some weights from others, allowing poorly performing experts to quickly stand out when they start doing better.
In that way, the experts are encouraged to rejuvenate their past performance and ``start a new life'' so that we can track the best expert in different time epochs. 
The classical fixed-share algorithm \cite[Chapter 5]{cesa2006prediction} is a special case when $\beta_{ij}=1$ for all $i\neq j$ and $\kappa$ becomes $\kappa/(N-1)$, illustrated in Fig.~\ref{fig:directionalExpert}(a). 

Our algorithm 
in this subsection is motivated by the particular \textit{path graph} $1 \rightarrow 2 \rightarrow \cdots N$, where $1,2,\ldots,N$ index the models and the corresponding $D$ is 1.
In other words, we share the weights in a directional way, thus encouraging the experts to switch in a chain. The update rule is illustrated by Fig.~\ref{fig:directionalExpert}(b). 

Our algorithm for sequential model expansion is summarized in Algorithm~\ref{algo:trackExpert}, where each candidate model and its corrected prediction loss can be regarded respectively as an expert and loss.  
The labeling of models $\mo_1,\mo_2,\ldots$ is generally in the ascending order of their dimensions. 
To further reduce the computational cost, we maintain only an active subset (of size $K$) instead of all the candidate models at each time. The active subset starts from $\{\mo_1,\ldots,\mo_K\}$; it switches to $\{\mo_2,\ldots,\mo_{K+1}\}$ when the weight of the smallest model $\mo_1$ becomes small, and that of the largest model $\mo_K$ becomes large; it continues to switch upon the aggregation of data. 

The output of Algorithm~\ref{prop:regret} is a predictive distribution over the active models. It can be used in the following two ways in practice: 1) we randomly draw a model according to the predictive distribution and use the predictor of that model, or 2) we use the weighted average of predictors of each model according to the predictive distribution. This can be regarded as a specific ensemble learning (or model averaging) method. 
The following Proposition~\ref{prop:regret} shows that with appropriate learning parameters, the average predictive performance of our algorithm is asymptotically close to the average of a series of truly optimal models (i.e., optimal model expansion), allowing moderately many switches.

\begin{figure}[tb]
\centering
 \includegraphics[width=3in]{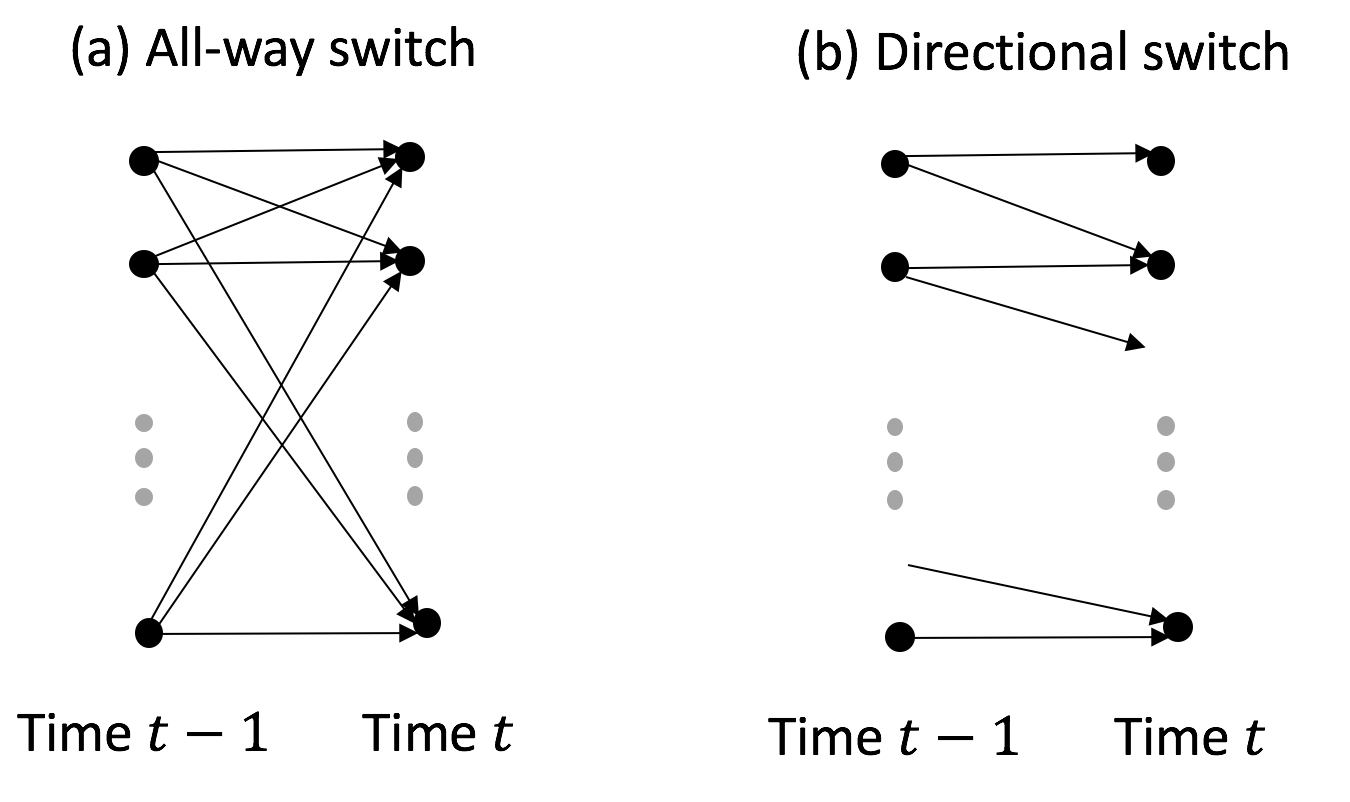}
 \vspace{-0.0in}
 \caption{Illustration of the state-of-art and our new way of redistributing the share of weights in online learning. }
 \label{fig:directionalExpert}
 \vspace{-0.0in}
\end{figure}

\begin{algorithm}[tb]
\vspace{-0.0 cm}
\small
\caption{Sequential model expansion using GTIC-corrected loss (GTIC-sequential)}
\label{algo:trackExpert}
\begin{algorithmic}[1]
\INPUT $\{ \bm z_t : t=1,\ldots ,T\}$, $\eta > 0$, $\kappa \in [0,1]$, $w_{0,1}=1, w_{0,2}=\cdots=w_{0,K}=0$, candidate models $\MoT=\{\mo_1,\mo_2,\ldots,\mo_{\card(\MoT)}\}$, $s=0$ ($\mo_{s+1},\ldots,\mo_{s+K}$ are the maintained active subsets of models), $K \in \N$, threshold $\rho \in [0,1]$
\OUTPUT $\bm p_t=[p_{t,1},\ldots, p_{t,K}]^\T$ (predictive distribution over the active models) for each $t=1, \ldots,T$
\FOR {$t = 1 \to n$ } 
	\STATE Obtain $\yt$ and compute 
	$v_{t,k} = w_{t-1,k} \exp\{-\eta \ \Lc[\mo_{s+k}] \}$ 
	for each $k=1,\ldots,K$, where $\Lc[\mo]$ is calculated from (\ref{eq30}) and fitting the data $\bm z_1, \ldots \bm z_t$ to model $\mo$.  
%
	\STATE Let 
	\begin{align}
		w_{t,k} = \left\{
		\begin{aligned} 
			(1-\kappa) \ v_{t,k} &\quad \textrm{ if } k=1 \\
			(1-\kappa) \ v_{t,k} + \kappa \ v_{t,k-1} &\quad \textrm{ if } 1<k<K \\ 
			v_{t,k} + \kappa \ v_{t,k-1} &\quad \textrm{ if } k=K
		\end{aligned} \right. \nonumber
	\end{align}
	\STATE Let 
		$p_{t,k} = (\sum_{k=1}^K w_{t,k})^{-1} w_{t,k}$, $k=1,\ldots,K$ 
	\IF{$p_{t,1} \leq \rho$ and $p_{t,K} \geq 1-\rho$ and $s+K \leq \card(\MoT)$}
		\STATE Let $s = s+1 $
		\STATE Let $w_{t,k}=w_{t,k'}$, where $k=1,\ldots,K$ and $k'=(k+1 \textrm{ mod }K)$ (relabeling the active models)
	\ENDIF
\ENDFOR
\end{algorithmic}
 \vspace{-0.0cm}%
\end{algorithm}


\begin{proposition} \label{prop:regret}
Suppose that Assumptions~1-7 hold, and that $\sup_{1 \leq t \leq T} \sup_{\mo\in \Mo} \Lc[\mo] < c$ almost surely for some fixed constant $c>0$. 
Suppose that the lines 5-8 are removed from Algorithm~\ref{algo:trackExpert}, and that $K=\card(\Mo)$, then its output satisfies 
\begin{align}
&\frac{1}{T} \biggl( \sum_{t=1}^T \sum_{i=1}^{\card(\MoT)} p_{i,t} \, \Lc[\mo_{i}] - \min_{\size(i_1,i_2,\ldots,i_T) \leq k} \sum_{t=1}^T \Lc[\mo_{i_t}] \biggr) 
\leq  \frac{c}{\sqrt{2}} \sqrt{ H\biggl(\frac{k}{T-1}\biggr) } \label{eq:final1}
\end{align}
for all $T\geq 1$,
given that 
$$
\kappa=\frac{k}{T-1}, \
\eta=\frac{1}{c}\sqrt{8 \frac{T-1}{T} H\biggl(\frac{k}{T-1}\biggr) }. 
$$

In particular, if $k=o(T)$, we have 
\begin{align}
	&\limsup_{T\rightarrow \infty} \frac{1}{T} \biggl( \sum_{t=1}^T \sum_{i=1}^{\card(\MoT)} p_{i,t} \, \Lc[\mo_{i}] 
	 - \min_{\size(i_1,i_2,\ldots,i_T) \leq k} \sum_{t=1}^T \L[\mo_{i_t}] \biggr) \leq 0 
\end{align}
almost surely. 
  
\end{proposition}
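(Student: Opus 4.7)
The plan is to reduce Proposition~\ref{prop:regret} to a direct application of Theorem~\ref{thm:graphexpert}. With lines~5--8 removed and $K=\card(\MoT)$, Algorithm~\ref{algo:trackExpert} is exactly the instance of Algorithm~\ref{algo:trackGraphOrigin} whose directed graph is the path $1\to 2\to\cdots\to\card(\MoT)$, which has maximum out-degree $\deg=1$. Each candidate model $\mo_i$ plays the role of an expert, and at time $t$ the loss $l(i,\bm z_t)$ is set to the GTIC-corrected loss $\Lc[\mo_i]$ recomputed from $\bm z_1,\ldots,\bm z_t$. Since the losses take values in $[0,c]$ rather than $[0,1]$, I would invoke the theorem on the rescaled losses $\Lc/c$; the weight update $\exp(-\eta\,\Lc)$ in Algorithm~\ref{algo:trackExpert} equals $\exp\bigl(-(\eta c)\cdot(\Lc/c)\bigr)$, so the effective learning rate substituted into Theorem~\ref{thm:graphexpert} is $\eta c$.

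Substituting $\deg=1$, invoking Theorem~\ref{thm:graphexpert}, and multiplying back through by $c$ yields
\[
\sum_{t=1}^T\Bigl(\sum_{i=1}^{\card(\MoT)} p_{i,t}\,\Lc[\mo_i]-\Lc[\mo_{i_t}]\Bigr)\leq\frac{1}{\eta}(T-k-1)\log\tfrac{1}{1-\kappa}+\frac{1}{\eta}k\log\tfrac{1}{\kappa}+\tfrac{\eta c^2 T}{8}.
\]
Taking $\kappa=k/(T-1)$ collapses the two penalty terms to $(T-1)H\bigl(k/(T-1)\bigr)$ by the direct identity $k\log\tfrac{T-1}{k}+(T-1-k)\log\tfrac{T-1}{T-1-k}=(T-1)H(k/(T-1))$, and minimizing the resulting $A/\eta+B\eta$ expression over $\eta$ returns exactly $\eta=c^{-1}\sqrt{8(T-1)H(k/(T-1))/T}$ as stated. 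Dividing the minimum by $T$ and bounding $(T-1)/T\leq 1$ yields the right-hand side of (\ref{eq:final1}).

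For the almost-sure limsup statement, let $(i_t^*)$ denote any admissible sequence minimizing $\sum_t\L[\mo_{i_t}]$ (the set of such sequences is finite, so the minimum is attained). The bound just proved gives
\[
\frac{1}{T}\Bigl(\sum_{t=1}^T\sum_{i}p_{i,t}\Lc[\mo_i]-\min_{(i_t)}\sum_{t=1}^T\L[\mo_{i_t}]\Bigr)\leq\frac{c}{\sqrt 2}\sqrt{H\bigl(k/(T-1)\bigr)}+\frac{1}{T}\sum_{t=1}^T\bigl(\Lc[\mo_{i_t^*}]-\L[\mo_{i_t^*}]\bigr).
\]
The first term tends to $0$ because $k=o(T)$ forces $H(k/(T-1))\to 0$. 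For the second term, the proof of Theorem~\ref{theorem:efficiency} already delivers the uniform approximation $\sup_{\mo\in\MoT}|\Lc[\mo]-\L[\mo]|/\L[\mo]\limp 0$ as the sample size grows, while the hypothesis $\Lc\leq c$ gives the deterministic envelope $|\Lc[\mo_{i_t^*}]-\L[\mo_{i_t^*}]|\leq 2c$. A Borel--Cantelli argument powered by the polynomial-moment bounds in Assumption~\ref{ass:risk}, followed by a Ces\`aro-type averaging, then drives the time average $T^{-1}\sum_t(\Lc[\mo_{i_t^*}]-\L[\mo_{i_t^*}])$ to zero almost surely.

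The main obstacle will be this last almost-sure upgrade. Theorem~\ref{theorem:efficiency} is phrased in probability, whereas here the closeness of $\Lc$ to $\L$ must hold simultaneously along the whole trajectory $t=1,\ldots,T$ and along the unknown graph-restricted optimal path $(i_t^*)$. Because $\card(\MoT)$ may grow with $T$, the union bound must exploit the polynomial-moment orders $m_1,m_2,m_3$ in (\ref{eq85})--(\ref{eq88}) chosen large enough that the tail probabilities are summable in $t$, and it must be coupled with the $n^\tau$-rate from Assumption~\ref{ass:converge} applied at each intermediate sample size $t$. Once the uniform-in-$t$ closeness is upgraded to almost sure, the Ces\`aro-averaging step and the vanishing of $H(k/(T-1))$ under $k=o(T)$ are routine, completing the proof.
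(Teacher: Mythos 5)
Your proposal takes exactly the paper's route: the paper's entire proof is the one-line remark that the result ``follows directly from Theorem~\ref{theorem:efficiency} and Theorem~\ref{thm:graphexpert} (with $D=1$), by using simple manipulations,'' and your write-up correctly fills in those manipulations---the path-graph reduction with out-degree $1$, the rescaling of $[0,c]$-valued losses with effective learning rate $\eta c$, the binary-entropy identity at $\kappa=k/(T-1)$, the optimization over $\eta$, and the decomposition of the second claim into the regret bound plus the Ces\`aro average of $\Lc[\mo_{i_t^*}]-\L[\mo_{i_t^*}]$. The one step you flag as the main obstacle---upgrading the in-probability closeness of $\Lc$ to $\L$ from Theorem~\ref{theorem:efficiency} to an almost-sure, uniform-in-$t$ statement (which would require tail probabilities summable in $t$, not guaranteed by the fixed moment orders $m_1,m_2,m_3$ in Assumption~\ref{ass:risk} as stated)---is a genuine gap, but it is one the paper itself glosses over entirely rather than a defect unique to your argument.
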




Next, we explain some details regarding Algorithm~\ref{algo:trackExpert} and Proposition~\ref{prop:regret}.
In addition to the (sequential) data and model class, other inputs to Algorithm~\ref{algo:trackExpert} are two learning parameters $\eta,\kappa$, the number of active models $K$, and the threshold $\rho$. 
The parameters $\eta$ and $\kappa$ control the rate of learning and the rate of model expansion, respectively. The number of active models $K$ is set to reduce the computation cost when the sample size is small compared with model dimensions, and the threshold $\rho$ is used to update our active models under consideration.

In particular, upon the arrival of a new data point or a set of data points, denoted by $\bm z_t$, at each time step $t$ (line 1), we update the weight of each candidate model by a Bayes-type procedure (line 2). The loss employed in the update is the corrected prediction loss, which is directly computable from the data and which serves as an approximation of the out-sample prediction loss (as was discussed in Subsection~\ref{subsec:GTIC}). The weights of each model are then updated following the path graph (line 3). 
When the weight of the smallest model becomes small, and that of the largest model becomes large, it means the current active models are inadequately small. So we drop the smallest model and include the next large model into the active set, and adjust their weights accordingly (lines 5-8). In line 7, the weight of the removed model is assigned to the newly included one, so that the sum of the weights remains the same. 
Proposition~\ref{prop:regret} states that the average predictive performance of our algorithm is asymptotically close to that of the optimal model expansion allowing $k=o(T)$ switches. For example, if only one point arrives at each time step, and the dimension of the optimal model is at the order of $T^{\delta}$ for $\delta \in (0,1)$, then the condition is trivially satisfied. 

The proof of Proposition~\ref{prop:regret} follows directly from Theorem~\ref{theorem:efficiency} and Theorem~\ref{thm:graphexpert} (with $D=1$), by using simple manipulations.
For technical convenience, Proposition~\ref{prop:regret} is only proved by removing the part of maintaining an active subset (lines 5-8). 
We maintain an active subset mainly for computational purposes, and we experimentally observed that it does not deteriorate the predictive performance. Theoretically, this is because the excluded models are often overly large or small, and their weights are thus negligible.
Next, we make some specific assumptions to illustrate the above idea. Suppose that the corrected prediction loss of model $\mo$ with dimension $d_\mo$ is approximately
$\Lc[\mo] = c_1 d_{\mo}^{-\gamma} + c_2 d_{\mo}/t$ for some positive constants $c_1,c_2,\gamma$, consisting of a bias and a variance term at each time step $t$. 
Let $w_t[\mo],v_t[\mo]$ denote the counterpart of $w_{t,k},v_{t,k}$ for each model $\mo \in \Mo$ if it were calculated, and suppose that $v_t[\mo_1] \leq \cdots \leq v_t[\mo_m]$ for a certain $m$. 
For any $1 \leq i < j \leq m$, we have 
\begin{align*}
	w_T[\mo_j] 
	&\geq (1-\kappa) v_T[\mo_j] 	=(1-\kappa) \exp\{-\eta \mathcal{L}_T^c[\mo_j]\} w_{T-1}[\mo_j] \\
	&\geq (1-\kappa)^T \exp\{-\eta \sum_{t=1}^T \Lc[\mo_j]\} 
	\sim (1-\kappa)^T \exp\{-\eta c_1 T d_{\mo_j}^{-\gamma} - \eta c_2 d_{\mo_j} \log T \} 
\end{align*}
while on the other hand
\begin{align*}
	w_T[\mo_i] 
	& 	\leq v_T[\mo_i] = w_{T-1}[\mo_i] \exp\{-\eta \mathcal{L}_T^c[\mo_i]\} \\
	& \leq \exp\{-\eta \sum_{t=1}^T \Lc[\mo_i]\} 
	\sim \exp\{-\eta c_1 T d_{\mo_i}^{-\gamma} - \eta c_2 d_{\mo_i} \log T \}
\end{align*}
For small $k$, it can be verified that $(1-\kappa)^T \sim \exp(-k)$ and
$$
w_t[\mo_i] /w_t[\mo_j] = O(1) \times \exp\{-\eta c_1 T (d_{\mo_i}^{-\gamma} - d_{\mo_j}^{-\gamma}) \}
= O(1) \times \exp\{- c_3 (\log T ) (d_{\mo_j}-d_{\mo_i})\}
.
$$
for $d_{\mo_i} < d_{\mo_j} \leq c_4(T/\log T)^{1/(1+\gamma)}$ for some constants $c_3,c_4$.
This indicates that the relative weights of underfitting models will exponentially decay {\co as the dimension departs from the optimum}. We leave a more sophisticated analysis for future research. 

\begin{remark}[More on Algorithm~\ref{algo:trackExpert}]
{\co We note that the penalization method in the sequential algorithm can be replaced with a general black-box method that computes $\Lc[\mo]$, a quantity that approximates the out-sample prediction loss. Proposition 1 is not necessarily specific to GTIC. }
Also, an anonymous reviewer pointed out that the above sequential model selection would be most useful when combined with a sequential update of the model parameters (for a given model). Suppose that $\hat{\theta}_n$ is the MLE of a parameter $\theta$ under $t$ data observations. Here we summarize two common methods that could be potentially used in online implementations: 1) calculate asymptotic expression of $\hat{\theta}_t - \hat{\theta}_{t-1}$, also called the asymptotic influence function, to update from $\hat{\theta}_{t-1}$ to $\hat{\theta}_{t}$ (see e.g. \cite[Theorem 5.23]{van2000asymptotic}), and 2) use $\hat{\theta}_{t-1}$ as a warm start for estimating $\hat{\theta}_{t}$ when using iterative algorithms such as the (stochastic) gradient descent and Newton-Raphson method.
\end{remark}

%
%
\section{Conclusion} 

In the framework of parametric models with possibly expanding model dimensions and model space, we studied a method to approach the limit of statistical learning in the sense that the predictive power of the selected model is asymptotically close to the best offered from a model class. 
The proposed method, GTIC, is an extension of an information criterion by Takeuchi to more general loss functions.  
Our theoretical analysis of GTIC justifies the use of TIC for general mis-specified model classes, and extends some technical tools for classical analysis of AIC in linear models. Moreover, the proposed approach serves as an alternative of leave-one-out cross-validation that is in general not accessible due to its computational burden.
In the second part of the paper, we also proposed a sequential model expansion algorithm for reliable online prediction with low computation cost, based on our new graph-based expert tracking techniques.  
In summary, the proposed methodology is asymptotically optimal and practically useful, and it can be a promising competitor of cross-validation in both batch and online settings.   

%

%
%
%
%


\appendices

%
%

\section{Proof of Theorem~\ref{theorem:efficiency}}
{\co 
We first outline the proof of Theorem~\ref{theorem:efficiency}.
Lemma~\ref{lemma:consistency} proves that $\thE[\mo]$ is $n^\tau$-consistent.
Lemma~\ref{lemma:vol} bounds the volume of a neighborhood of the oracle model using the bracketing number. Lemma~\ref{lemma:entropyIneq} is a technical result that relates the volume of a union of model spaces with that of individual ones.
Based on the above lemmas, we show sufficient conditions to guarantee $\sup_{\bm f \in \F_n} \norm{\G \bm f} =o_p(1)$ in Lemma~\ref{lemma:tight}, and its specialization when $\bm f $ is in the form of $ \bm \psi_{n}(\cdot, \thE[\mo] ; \mo) - \bm \psi_{n}(\cdot, \thT[\mo] ; \mo)$ in Lemma~\ref{lemma:locUnifConv} (which states a type of local uniform convergence). 
We then prove Lemma~\ref{lemma:normality}, a counterpart of the classical asymptotic normality of maximum likelihood estimators. All the lemmas are assembled in the final proof of Theorem~\ref{theorem:efficiency} with Taylor expansions.
 
}





\begin{lemma} \label{lemma:consistency}
	Suppose that Assumptions~\ref{ass:data},~\ref{ass:unique},~\ref{ass:converge},~\ref{ass:eig},~\ref{ass:Lip},~\ref{ass:secondOrderLLN} hold. Then $\thE$ is $n^{\tau}$-consistent uniformly over $\Mo$, namely $\sup_{\mo \in \Mo} n^{\tau} \norm{\thE[\mo]-\thT[\mo]} = O_p(1)$.
\end{lemma}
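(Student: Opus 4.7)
The plan is to prove the lemma in two stages: first establish the qualitative uniform consistency $\sup_{\mo \in \Mo} \norm{\thE[\mo] - \thT[\mo]} = o_p(1)$, then upgrade to the quantitative $n^\tau$-rate via a first-order-condition argument.

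For stage one, I invoke the standard M-estimator consistency argument, uniformly in $\mo$. Since $\thE[\mo]$ minimizes $E_n \ell(\cdot, \theta; \mo)$,
\[
\E \ell(\cdot, \thE[\mo]; \mo) - \E \ell(\cdot, \thT[\mo]; \mo) \;\leq\; 2 \sup_{\mo \in \Mo,\, \theta \in \H[\mo]} \bigl|E_n \ell(\cdot, \theta; \mo) - \E \ell(\cdot, \theta; \mo)\bigr|,
\]
whose right-hand side is $o_p(1)$ by the uniform law of large numbers in Assumption~\ref{ass:unique}. Combined with the uniform well-separation condition in the same assumption, this forces $\sup_{\mo} \norm{\thE[\mo] - \thT[\mo]}$ below any fixed $\varepsilon > 0$ with probability tending to one.

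For stage two, stage one together with the interior condition on $\thT[\mo]$ in Assumption~\ref{ass:unique} ensures that $\thE[\mo]$ also lies in the interior of $\H[\mo]$ for all $\mo \in \Mo$ simultaneously, with high probability, so the first-order condition $E_n \bm\psi_{n}(\cdot, \thE[\mo]; \mo) = \bm 0$ holds. Subtracting $\E \bm\psi_{n}(\cdot, \thE[\mo]; \mo)$, expanding $\theta \mapsto \E \bm\psi_{n}(\cdot, \theta; \mo)$ around $\thT[\mo]$ in integral mean-value form, and using that $\E \bm\psi_{n}(\cdot, \thT[\mo]; \mo) = \bm 0$ by first-order optimality of $\thT[\mo]$ in the interior, I arrive at
\[
\bar V_n[\mo] \bigl(\thE[\mo] - \thT[\mo]\bigr) \;=\; -\bigl(E_n - \E\bigr)\bm\psi_{n}(\cdot, \thE[\mo]; \mo),
\]
where $\bar V_n[\mo] \de \int_0^1 V_n\bigl(\thT[\mo] + t(\thE[\mo] - \thT[\mo]); \mo\bigr)\, dt$. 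Assumption~\ref{ass:eig} together with the uniform continuity (\ref{eq:uniformV}) in Assumption~\ref{ass:secondOrderLLN} guarantees that $\textrm{eig}_{\min}(\bar V_n[\mo]) \geq c_1/2$ uniformly in $\mo$ on the high-probability event of stage one. The right-hand side has norm bounded by $\sup_{\mo} \sup_{\theta \in B(\thT[\mo], \delta)} \norm{(E_n - \E)\bm\psi_{n}(\cdot, \theta; \mo)} = n^{-\tau} O_p(1)$ by Assumption~\ref{ass:converge}. Inverting $\bar V_n[\mo]$ then delivers $\sup_{\mo} n^\tau \norm{\thE[\mo] - \thT[\mo]} = O_p(1)$.

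The main technical delicacy is that every bound must be uniform over the (possibly growing) model class $\Mo$ and the (possibly growing) dimensions $\d$. Fortunately, this uniformity is precisely what the hypotheses provide: the uniform LLN and uniform separation from Assumption~\ref{ass:unique}, the uniform $n^\tau$ empirical-process bound on the score from Assumption~\ref{ass:converge}, and the uniform lower eigenvalue bound together with uniform Hessian continuity from Assumptions~\ref{ass:eig} and~\ref{ass:secondOrderLLN}. The only item requiring a small extra step beyond direct invocation is the control of the random mean-matrix $\bar V_n[\mo]$ along the segment from $\thT[\mo]$ to $\thE[\mo]$, which follows because each $V_n(\theta; \mo)$ with $\theta \in B(\thT[\mo], \delta)$ is spectrally close to $V_n(\thT[\mo]; \mo)$ uniformly in $\mo$.
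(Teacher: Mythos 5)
Your proposal is correct and takes essentially the same route as the paper's own proof: uniform consistency from the well-separation and uniform LLN of Assumption~\ref{ass:unique} (adapting van der Vaart's Theorem 5.7), then the two first-order conditions $E_n \bm\psi_{n}(\cdot,\thE[\mo];\mo)=\bm 0$ and $\E \bm\psi_{n}(\cdot,\thT[\mo];\mo)=\bm 0$ combined with a mean-value expansion of $\th \mapsto \E\bm\psi_{n}(\cdot,\th;\mo)$, the $n^{-\tau}$ empirical-process bound of Assumption~\ref{ass:converge}, and inversion of the Hessian-type matrix via Assumptions~\ref{ass:eig} and~\ref{ass:secondOrderLLN}. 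Your integral form $\bar V_n[\mo]=\int_0^1 V_n\bigl(\thT[\mo]+t(\thE[\mo]-\thT[\mo]);\mo\bigr)\,dt$ is in fact a slightly more careful rendering than the paper's single intermediate point $\tilde\th[\mo]$ (which is not strictly valid for vector-valued maps), but otherwise the two arguments coincide step for step.
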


\begin{proof}
	 Using Assumptions~\ref{ass:data}, \ref{ass:unique}, and a direct adaptation of the techniques in \cite[Theorem 5.7]{van2000asymptotic} {\co (which is on the asymptotic consistency of M-estimators)}, we can prove that $\thE[\mo]$ is consistent in the sense that 
	 \begin{align}
	 	\sup_{\mo \in \Mo} 	\norm{\thE[\mo] - \thT[\mo]} = o_p(1) \label{eq70}
 	 \end{align}
	 as $n \rightarrow \infty$.
	 
	 From the definitions of $\thE$ and $\thT$, we have for each $\mo \in \Mo$
	 \begin{align}
	 	&n^{\tau} \E \{\bm \psi_{n}(\cdot, \thT[\mo] ; \mo)-\bm \psi_{n}(\cdot, \thE[\mo] ; \mo)\} \nonumber\\
	 	&= n^{\tau} \{0 - \E \bm \psi_{n}(\cdot, \thE[\mo] ; \mo)\} \nonumber\\
	 	&= n^{\tau} \{E_n \bm \psi_{n}(\cdot, \thE[\mo] ; \mo) - \E \bm \psi_{n}(\cdot, \thE[\mo] ; \mo)\}	\label{eq31}
	 \end{align}
	 
	From the differentiability of the map $\th \mapsto \E \psi_{n}(\cdot,\th ; \mo)$, there exists $\tilde{\th}[\mo]$ such that $\norm{\tilde{\th}[\mo] - \thT[\mo]} \leq \norm{\thE[\mo] - \thT[\mo]}$, and 
	\begin{align} 
		&\E \{\bm \psi_{n}(\cdot,\thT[\mo] ; \mo)-\bm \psi_{n}(\cdot,\thE[\mo] ; \mo)\} \nonumber\\ 
		 &= \nabla_{\th} \E \{\bm \psi_{n}(\cdot,\tilde{\th}[\mo] ; \mo) \} (\thT[\mo]-\thE[\mo]) \nonumber \\
		 &= V_{n}(\tilde{\th}[\mo] ;\mo) (\thT[\mo]-\thE[\mo]) , \label{eq32}
	\end{align}
	where the exchangeability of integral and differentiation (in the second identity) is guaranteed by (\ref{eq:Lip}) and (\ref{eq:exchangeable}) in Assumption~\ref{ass:Lip}. 
	
	Therefore, with probability tending to one, we have 
	\begin{align}
		&\sup_{\mo \in \Mo} n^{\tau} \norm{ V_{n}(\tilde{\th}[\mo] ;\mo) (\thT[\mo]-\thE[\mo]) } \nonumber\\
		&= \sup_{\mo \in \Mo} n^{\tau} \norm{ \E \{\bm \psi_{n}(\cdot,\th ; \mo)-\bm \psi_{n}(\cdot,\thE ; \mo)\} } \nonumber\\
		&= \sup_{\mo \in \Mo} n^{\tau} \norm{ E_n \bm \psi_{n}(\cdot, \thE[\mo] ; \mo) - \E \bm \psi_{n}(\cdot, \thE[\mo] ; \mo) } \nonumber\\
		&= O_p(1) \nonumber	
	\end{align}
	where the first equality is due to (\ref{eq32}), the second equality is due to (\ref{eq31}), and the third equality comes from Assumption~\ref{ass:converge}. 
	By the (\ref{eq:uniformV}) in Assumption~\ref{ass:secondOrderLLN} and Assumption~\ref{ass:eig}, $V_{n}(\tilde{\th}[\mo]; \mo)$ is invertible for each $\mo \in \Mo$, and  
	$$\sup_{\mo \in \Mo} \norm{V_{n}(\tilde{\th}[\mo]; \mo)^{-1} } < 1/(2c_1)$$ 
	with probability tending to one. It follows that  
	\begin{align}
		&\sup_{\mo \in \Mo} n^{\tau} \norm{\thT[\mo]-\thE[\mo]} \nonumber\\
		&\leq \sup_{\mo \in \Mo} \biggl\{ 
		\norm{V_{n}(\tilde{\th}[\mo]; \mo)^{-1}} \cdot
		\norm{ n^{\tau} V_{n}(\tilde{\th}[\mo]; \mo) (\thT[\mo]-\thE[\mo])} 	\biggr\} \nonumber \\
		&= O_p(1) ,
	\end{align}
	which concludes the proof. 
\end{proof}


Before we proceed, we need the following definition. 

\begin{definition}[Bracketing number] \label{def:bracketing}
	Given two scalar functions $f_1$ and $f_2$, the bracket $[f_1,f_2]$ is the set of all functions $f$ such that $f_1 \leq f \leq f_2$. An $\v$-bracket in $L_2(\P)$ is a bracket $[f_1,f_2]$ with $\E (f_2-f_1)^2 < \v^2$. The bracketing number $N_{[\ ]}(\v,\F,L_2(\P))$ is the minimum number of $\v$-brackets needed to cover a set $\F$.  Moreover, the bracketing integral is defined by 
	\begin{align}
	I_{[\ ]}(\delta, \F, L_2(\P)) = \int_{0}^{\delta} \sqrt{\log N_{[\ ]}(\v,\F,L_2(\P))} d\v 
	\label{eq71}
	\end{align}
	for $\delta > 0$.
\end{definition}

	The logarithm of the above bracketing number is also referred to as bracketing entropy relative to the $L_2(\P)$-norm. It is commonly used to describe the size of a class of functions. 
	We will use the above definition in order to prove uniform convergence results.  
	We refer to \cite{spokoiny2012parametric} for a different bracketing idea used to study the nonasymptotic estimation theory. 

We have the following lemma whose proof follows directly from Definition~\ref{def:bracketing} and Assumption~\ref{ass:Lip}. 
		
\begin{lemma} \label{lemma:vol}
	Suppose that Assumption~\ref{ass:Lip} holds, and $r_n \leq r$ for all $n$ (where $r$ has been defined in Assumption~\ref{ass:Lip}). Let $\F_n[\mo]=\bigl\{
			\bm \psi_{n}(\cdot, \th ; \mo): \th \in B(\thT[\mo],r_n) \subset \Real^{\d} 
		\bigr\}$ be a collection of (vector-valued) measurable functions. 
	Then 
	\begin{align}
		N_{[\ ]}(\v , \F_n[\mo], L_2(\P)) \leq \bigl(\v^{-1} r_n \norm{m_n}_{\P}\bigr)^{\d}  \nonumber	
	\end{align}
	 for all $0 < \v < r_n$. 
\end{lemma}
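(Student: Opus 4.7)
The plan is to reduce the bracketing count to a Euclidean covering count of the parameter ball $B(\thT[\mo], r_n) \subset \Real^{\d}$, and then convert each center of such a net into a bracket of the required $L_2(\P)$-size by invoking the Lipschitz hypothesis (\ref{eq:Lip}). This is the standard bracketing-from-Lipschitz argument; the only thing that requires care is the vector-valued nature of $\bm\psi_n$ and keeping the constants absorbed into the clean form of the claim.

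First I would set $\delta = \v/\norm{m_n}_{\P}$ and pick a minimal $\delta$-net $\{\th_1,\ldots,\th_M\} \subset B(\thT[\mo], r_n)$ with respect to the Euclidean distance. A standard volume-comparison argument in $\Real^{\d}$ gives $M \leq (r_n/\delta)^{\d} = (\v^{-1} r_n \norm{m_n}_{\P})^{\d}$ (any extraneous universal multiplicative constant coming from a careful volume bound is absorbed into the $\d$-th power or into $\delta$ by rescaling, which is tolerable since the statement is an upper bound in this form).

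Next, for each net point $\th_j$, by (\ref{eq:Lip}) every $\th$ in $B(\th_j,\delta) \cap B(\thT[\mo], r_n)$ satisfies
\begin{equation*}
\norm{\bm\psi_n(\bm z, \th) - \bm\psi_n(\bm z, \th_j)} \leq m_n(\bm z)\,\norm{\th - \th_j} \leq \delta\, m_n(\bm z)
\end{equation*}
for $\P$-almost every $\bm z$. Reading this componentwise, $\bm\psi_n(\cdot,\th)$ sits inside the (coordinatewise) bracket
\begin{equation*}
\bigl[\bm\psi_n(\cdot,\th_j) - \delta\, m_n(\cdot),\ \bm\psi_n(\cdot,\th_j) + \delta\, m_n(\cdot)\bigr].
\end{equation*}
The scalar envelope of this bracket has $L_2(\P)$-norm $2\delta \norm{m_n}_{\P} = 2\v$, which is $\v$ after the routine rescaling $\delta \mapsto \delta/2$ (the resulting factor of $2^{\d}$ is again absorbed into the stated bound). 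Since the chosen $\delta$-net covers $B(\thT[\mo],r_n)$, every function in $\F_n[\mo]$ lies in at least one of the $M$ brackets, which yields $N_{[\ ]}(\v, \F_n[\mo], L_2(\P)) \leq M \leq (\v^{-1} r_n \norm{m_n}_{\P})^{\d}$.

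The only mildly subtle point is the constant bookkeeping in (a) the Euclidean covering number and (b) the translation between bracket width and $L_2(\P)$-size; both are dealt with by the usual absorption into the $\d$-th power. Everything else — Lipschitz continuity $\Rightarrow$ bracket containment $\Rightarrow$ covering count — is an immediate consequence of (\ref{eq:Lip}) together with Definition~\ref{def:bracketing}, so the proof is essentially one display-long.
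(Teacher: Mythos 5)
Your proof is correct and is precisely the argument the paper intends: the paper gives no written proof, saying only that the lemma ``follows directly from Definition~\ref{def:bracketing} and Assumption~\ref{ass:Lip}'', and the standard Euclidean-net-plus-Lipschitz bracketing construction you spell out (with coordinatewise brackets $\bm\psi_n(\cdot,\th_j) \pm \delta\, m_n$ handling the vector-valued case, exactly as needed when the lemma is later applied to the scalar component classes $\F_{n,i}[\mo]$) is that direct route. The constant bookkeeping you flag is real --- the honest bound carries a universal factor raised to the $\d$-th power, e.g.\ from the $(1+2r_n/\delta)^{\d}$ covering bound and the factor $2$ in the bracket width, and such a factor cannot literally be ``absorbed'' when $\d$ grows with $n$ --- but the paper's constant-free statement glosses over the same point, and it is immaterial downstream since the bound enters Lemma~\ref{lemma:locUnifConv} only through its logarithm inside the bracketing integral, where it is dominated by condition (\ref{eq:mCond}).
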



We prove the following technical lemmas. 

\begin{lemma} \label{lemma:entropyIneq}
	For any sets of functions $\F_j,j=1,\ldots,k$, we have 	
	\begin{align}
		&I_{[\ ]}(\delta, \cup_{1 \leq j \leq k}\F_j , L_2(\P)) 
		\leq 
			\delta \sqrt{ \log k} + 
			k \max_{1 \leq j \leq k} I_{[\ ]}(\delta, \F_j, L_2(\P)) 
		 \nonumber	
	\end{align}
\end{lemma}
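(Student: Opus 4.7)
The plan is to reduce everything to a bound on the bracketing number of the union, then integrate. First I would observe that any collection of $\varepsilon$-brackets covering each individual $\mathcal{F}_j$ can be combined to give a cover of $\cup_j \mathcal{F}_j$, which yields
\begin{align*}
N_{[\ ]}(\varepsilon, \cup_{j=1}^k \mathcal{F}_j, L_2(P_*)) \leq \sum_{j=1}^k N_{[\ ]}(\varepsilon, \mathcal{F}_j, L_2(P_*)) \leq k \max_{1 \leq j \leq k} N_{[\ ]}(\varepsilon, \mathcal{F}_j, L_2(P_*)).
\end{align*}

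Next I would take logarithms and apply the elementary inequality $\sqrt{a+b} \leq \sqrt{a} + \sqrt{b}$ valid for nonnegative reals, giving
\begin{align*}
\sqrt{\log N_{[\ ]}(\varepsilon, \cup_{j=1}^k \mathcal{F}_j, L_2(P_*))} \leq \sqrt{\log k} + \max_{1 \leq j \leq k}\sqrt{\log N_{[\ ]}(\varepsilon, \mathcal{F}_j, L_2(P_*))},
\end{align*}
where I used the fact that the square root is monotone so the max commutes with it. Integrating from $0$ to $\delta$, the first term contributes $\delta\sqrt{\log k}$, and the second contributes at most $\int_0^\delta \max_j \sqrt{\log N_{[\ ]}(\varepsilon, \mathcal{F}_j, L_2(P_*))}\, d\varepsilon$.

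Finally I would control the pointwise maximum by the sum: since the integrand is nonnegative, $\max_j a_j \leq \sum_j a_j$, so
\begin{align*}
\int_0^\delta \max_{1 \leq j \leq k} \sqrt{\log N_{[\ ]}(\varepsilon, \mathcal{F}_j, L_2(P_*))}\, d\varepsilon \leq \sum_{j=1}^k I_{[\ ]}(\delta, \mathcal{F}_j, L_2(P_*)) \leq k \max_{1 \leq j \leq k} I_{[\ ]}(\delta, \mathcal{F}_j, L_2(P_*)),
\end{align*}
which assembles into the claimed inequality. There is no real obstacle here; the only minor subtlety is choosing to split $\sqrt{\log(k \cdot N_{\max})}$ via the subadditive square root rather than by bounding $\log k$ times something, which is what produces the clean additive decomposition matching the right-hand side of the lemma.
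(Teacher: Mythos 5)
Your proof is correct and is essentially identical to the paper's own argument: both bound $N_{[\ ]}(\varepsilon, \cup_j \F_j, L_2(\P))$ by the sum (hence $k$ times the maximum) of the individual bracketing numbers, split $\sqrt{\log k + \log N_{\max}}$ using the subadditivity $\sqrt{a+b}\leq\sqrt{a}+\sqrt{b}$, and then control the integrated pointwise maximum via $\max_j \leq \sum_j$ followed by $\sum_j \leq k\max_j$ on the bracketing integrals. There is nothing to add; the decomposition you describe is exactly the one used in the paper.
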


\begin{proof}
{\co
	Direct calculations show that 
	\begin{align*}
	\int_0^\delta \sqrt{\log N_{[\ ]}(\v, \cup_{j=1}^k \mathcal{F}_j, L_2(P_*))} \mathrm{d}\v
	& \leq \int_0^\delta \sqrt{\log \sum_{j=1}^k N_{[\ ]}(\v , \mathcal{F}_j, L_2(P_*))} \mathrm{d}\v \\
	& \leq \delta \sqrt{\log k} + \int_0^\delta \max_{1 \leq j \leq k} \sqrt{\log N_{[\ ]}(\v , \mathcal{F}_j, L_2(P_*))} \mathrm{d}\v \\
	& \leq \delta \sqrt{\log k} + \int_0^\delta \sum_{j=1}^k \sqrt{\log N_{[\ ]}(\v , \mathcal{F}_j, L_2(P_*))} \mathrm{d}\v  \\
	& \leq \delta \sqrt{\log k} + \sum_{j=1}^k \int_0^\delta  \sqrt{\log N_{[\ ]}(\v , \mathcal{F}_j, L_2(P_*))} \mathrm{d}\v \\
	& \leq \delta \sqrt{\log k} + k \max_{1 \leq j \leq k} I_{[\ ]}(\delta, \mathcal{F}_j, L_2(P_*)) .
\end{align*}
}
\end{proof}

%

\begin{definition}
For any class $\F$ of functions $f: \Z \rightarrow \Real$, a function $F: \Z \rightarrow \Real$ is called an envelope function of $\F$, if $\sup_{f \in \F} |f(z)| \leq F(z) < \infty $ for every $ z \in \Z$.	
\end{definition}

\begin{lemma}\cite[Lemma 19.34]{van2000asymptotic} \label{lemma:ineq}
	For any class $\F$ of measurable functions $f: \D \rightarrow \Real$ such that $\E f^2 < \delta^2$ for all $f$, {\co we have}, with 
	$$a(\delta) = \delta / \sqrt{\max\{1, \log N_{[\ ]}(\delta,\F,L_2(\P)) \}}$$ 
	and $F$ an envelope function, that 
	\begin{align}
		\E \sup_{f\in \F}| \G f | \lesssim I_{[\ ]}(\delta,\F,L_2(\P)) + \sqrt{n}\E \bigl\{ F \cdot 1_{F > \sqrt{n} a(\delta)} \bigr\}\nonumber	.
	\end{align}
	Here, $1_{A}$ is the indicator function of event $A$. 
\end{lemma}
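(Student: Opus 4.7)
The plan is to combine a single-level envelope truncation with dyadic bracketing chaining, which is the classical route to the van der Vaart--Wellner maximal inequality for bracketing classes.

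The first step is truncation. Set $M \de \sqrt{n}\, a(\delta)$ and consider the truncated class $\tilde \F = \{f\, 1_{2F \leq M} : f \in \F\}$. Since $|f - \tilde f| \leq F\, 1_{F > M/2}$ pointwise, the contribution of the discarded mass to $\E \sup_{f \in \F} |\G f|$ is bounded by a constant multiple of $\sqrt{n}\, \E\{F \cdot 1_{F > \sqrt{n} a(\delta)/2}\}$, which (up to a constant) is the second term of the stated inequality. It therefore remains to bound $\E \sup_{\tilde f \in \tilde \F} |\G \tilde f|$ for a class with uniform bound $M/2$ and the same bracketing numbers as $\F$.

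The second step is dyadic chaining inside $\tilde \F$. For each $j \geq 0$, put $\v_j = \delta 2^{-j}$ and fix a minimal $\v_j$-bracketing of $\F$ with cardinality $N_j \de N_{[\ ]}(\v_j, \F, L_2(\P))$. For each $f$, let $\pi_j f$ denote the lower endpoint of a bracket at level $j$ containing $f$, so that $\|\pi_j f - f\|_{\P} \leq \v_j$ and the chaining increments $\Delta_j f \de \pi_j f - \pi_{j-1} f$ satisfy $\|\Delta_j f\|_{\P} \lesssim \v_{j-1}$ and lie in a set of cardinality at most $N_j^2$. Telescoping $\tilde f = \pi_0 f + \sum_{j \geq 1} \Delta_j f$ and applying Bernstein's inequality at each level---available because $\tilde f$ and all chain approximations are uniformly bounded by $M$---yields, at level $j$, a maximal bound of order $\v_{j-1}\sqrt{\log N_j} + n^{-1/2} M \log N_j$. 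Summing the sub-Gaussian pieces over $j$ produces, by a Riemann-sum comparison with (\ref{eq71}), the entropy integral $I_{[\ ]}(\delta, \F, L_2(\P))$, which reproduces the first term of the stated inequality.

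The main technical subtlety---and the reason the specific function $a(\delta)$ appears in the statement---is the calibration of the truncation threshold $M$ against the entropy at scale $\delta$. The choice $a(\delta) = \delta/\sqrt{\max\{1, \log N_{[\ ]}(\delta, \F, L_2(\P))\}}$ is engineered exactly so that the sub-exponential Bernstein contribution $n^{-1/2} M \log N_j = a(\delta)\log N_j$ is absorbed into the bracketing integral via the monotonicity of $N_{[\ ]}(\cdot,\F,L_2(\P))$, while the truncation remainder stays within the clean expression $\sqrt{n}\E\{F \cdot 1_{F > \sqrt{n}a(\delta)}\}$; a looser threshold would inflate the sub-exponential term, a tighter one would enlarge the tail. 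Beyond this balancing, the remainder of the argument is standard bracketing chaining, and I would not expect any further serious obstacle.
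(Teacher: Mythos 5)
The paper does not prove this lemma at all: it is imported verbatim as Lemma 19.34 of van der Vaart, so the benchmark for your attempt is the textbook proof (equivalently Lemma 3.4.2 of van der Vaart and Wellner), and your proposal follows that proof's general architecture but has a genuine gap in the chaining step. A single global truncation at $M=\sqrt{n}\,a(\delta)$ followed by plain dyadic chaining with Bernstein at every level does not produce the entropy integral. Your own level-$j$ bound is $\v_{j-1}\sqrt{\log N_j}+n^{-1/2}M\log N_j=\v_{j-1}\sqrt{\log N_j}+a(\delta)\log N_j$; the first parts indeed Riemann-sum to $I_{[\ ]}(\delta,\F,L_2(\P))$, but the monotonicity of the bracketing number works \emph{against} you on the second parts, not for you: since $\log N_j\geq\log N_{[\ ]}(\delta,\F,L_2(\P))$, one gets $a(\delta)\log N_j\geq\delta\sqrt{\log N_j}=2^{j}\,\v_j\sqrt{\log N_j}$, so each sub-exponential term exceeds the corresponding slice of the integral by a factor $2^j$. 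Concretely, for a finite class with $\log N_j\equiv L$ every level contributes $\delta\sqrt{L}\asymp I_{[\ ]}(\delta,\F,L_2(\P))$, and your sum over all levels diverges. The related structural problem is your telescoping $\tilde f=\pi_0 f+\sum_{j\geq1}\Delta_j f$, which runs the chain through infinitely many levels with no remainder control; with brackets (unlike coverings with a Lipschitz parametrization) a chain stopped at a fixed finite level leaves a remainder whose empirical-process norm costs a factor $\sqrt{n}$, so neither ``chain forever'' nor ``chain to a fixed $J$'' works as sketched.

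The repair is precisely the content of the cited proof that your sketch omits: take \emph{nested} brackets and \emph{level-dependent} truncation thresholds $a_q\asymp\v_q/\sqrt{\log N_{q+1}}$, and stop the chain adaptively for each $f$ at the first level where the bracket-width function $\Delta_q f$ exceeds $\sqrt{n}\,a_q$ (the events $A_qf$ and $B_qf$ in van der Vaart--Wellner). With that decomposition the sub-exponential term at level $q$ becomes $a_{q-1}\log N_q\asymp\v_{q-1}\sqrt{\log N_q}$, which sums to the bracketing integral, the stopped pieces are handled through $\Delta_{q-1}f$ and a Markov bound, and the single-scale quantity $a(\delta)$ plays only the role it can actually support, namely calibrating the initial truncation and yielding the tail term $\sqrt{n}\,\E\{F\cdot 1_{F>\sqrt{n}a(\delta)}\}$. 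Your ``engineered exactly so that $a(\delta)\log N_j$ is absorbed by monotonicity'' sentence is therefore false as stated; it is true only for the adaptive thresholds $a_q$, not for the fixed $a(\delta)$. (The factor-of-two mismatch in your truncation step, producing the tail at $F>\sqrt{n}a(\delta)/2$ rather than $F>\sqrt{n}a(\delta)$, is cosmetic and fixable by truncating at exactly $M$; the adaptive-truncation issue is not.)
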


\begin{lemma} \label{lemma:tight}
	Let $\F_n=\cup_{\mo \in \Mo}\F_n[\mo]$, where $\F_n[\mo] = \{\bm f_{n,u}:u \in U[\mo] \}$ is a class of measurable vector-valued functions. In other words, for each $\mo \in \Mo$ and $u \in U[\mo]$, $\bm f_{n,u}=[f_{n,u,1}, \ldots, f_{n,u,\d}]^\T$ with $f_{n,u,i} : \D \rightarrow \Real$ being a scalar-valued function. The dimension $d_n[\alpha]$ may depend on $\alpha$ and $n$, and we let $d_n = \max_{\mo \in \Mo} \d$. Assume that the following conditions hold.

	(i)
	There is an envelope function $F_{n}$ that satisfies 
	$$\sup_{\mo \in \Mo, u \in U[\mo] \subset \Real^{\d}, 1 \leq i \leq \d} | f_{n,u,i}(\bm z)| \leq F_{n}(\bm z) < \infty $$ 
	for every $\bm z \in \D$;
	
	(ii) There exists a deterministic sequence $\{\delta_n\}$ such that
	\begin{align}
		\p\sqrt{\log \{ \p \card(\Mo) \}} \delta_n \rightarrow 0 ;	\label{eq75}
	\end{align}
	

	
	(iii) The bounded moment condition:
	\begin{align}
		&\delta_n^{-2} \E F_{n}^2 \rightarrow 0 ;\nonumber	
	\end{align}

	(iv) The bounded class condition: 
	$$\p^{2} \ \card(\Mo) \times \sup_{\mo \in \Mo, 1 \leq i \leq \d} I_{[\ ]}(\delta_n,\F_{n,i}[\mo],L_2(\P)) \rightarrow 0, $$
	where we let $\F_{n,i}[\mo] = \{f_{n,u,i} : u \in U[\mo] \}$. 
	
	Then we have $$\sup_{\bm f \in \F_n} \norm{\G \bm f} \limp 0$$ as $n \rightarrow \infty$. 
\end{lemma}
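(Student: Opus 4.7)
My strategy is to reduce the vector-valued uniform bound to a scalar empirical-process bound over the union of all component classes, and then combine Lemma~\ref{lemma:ineq} with Lemma~\ref{lemma:entropyIneq}. The elementary componentwise inequality $\norm{\G \bm f_{n,u}}^2 \leq \p \max_{i} (\G f_{n,u,i})^2$ gives
$$
\sup_{\bm f \in \F_n} \norm{\G \bm f} \;\leq\; \sqrt{\p}\, \sup_{f \in \tilde{\F}_n} |\G f|, \qquad \tilde{\F}_n \de \bigcup_{\mo \in \Mo}\bigcup_{i=1}^{\d} \F_{n,i}[\mo],
$$
so by Markov's inequality it is enough to prove $\sqrt{\p}\, \E \sup_{f \in \tilde \F_n} |\G f| \to 0$.

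I then apply Lemma~\ref{lemma:ineq} to $\tilde \F_n$ at level $\delta_n$ with the envelope $F_n$ from condition~(i); condition~(iii) secures $\E f^2 \leq \E F_n^2 < \delta_n^2$ for $n$ large, and hence
$$
\E \sup_{f \in \tilde \F_n} |\G f| \;\lesssim\; I_{[\ ]}(\delta_n, \tilde \F_n, L_2(\P)) + \sqrt{n}\,\E\bigl\{F_n \, 1_{F_n > \sqrt{n} a(\delta_n)}\bigr\}.
$$
For the bracketing integral, Lemma~\ref{lemma:entropyIneq} applied to the at most $\p \card(\Mo)$ scalar subclasses of $\tilde \F_n$ yields
$$
I_{[\ ]}(\delta_n, \tilde \F_n, L_2(\P)) \;\leq\; \delta_n \sqrt{\log\{\p \card(\Mo)\}} + \p \card(\Mo) \max_{\mo, i} I_{[\ ]}(\delta_n, \F_{n,i}[\mo], L_2(\P)).
$$
After multiplying by $\sqrt{\p}$, the two summands are bounded respectively by $\p \sqrt{\log\{\p \card(\Mo)\}} \delta_n$ and $\p^{2} \card(\Mo) \max_{\mo, i} I_{[\ ]}(\delta_n, \F_{n,i}[\mo], L_2(\P))$, both of which are $o(1)$ by conditions~(ii) and~(iv).

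The more delicate step is the envelope tail. Using $\E[F_n 1_{F_n > c}] \leq c^{-1} \E F_n^2$ with $c = \sqrt{n}\,a(\delta_n)$ and unfolding $a(\delta_n) = \delta_n / \sqrt{\max\{1,\log N_{[\ ]}(\delta_n, \tilde \F_n, L_2(\P))\}}$, the tail is at most $(\E F_n^2/\delta_n) \sqrt{\max\{1, \log N_{[\ ]}(\delta_n, \tilde \F_n, L_2(\P))\}}$. The plan is then to combine the subadditivity $\log N_{[\ ]}(\delta_n, \tilde \F_n) \leq \log\{\p \card(\Mo)\} + \max_{\mo,i} \log N_{[\ ]}(\delta_n, \F_{n,i}[\mo])$ with the monotonicity estimate $\sqrt{\log N_{[\ ]}(\delta_n, \F_{n,i}[\mo])} \leq \delta_n^{-1} I_{[\ ]}(\delta_n, \F_{n,i}[\mo], L_2(\P))$ coming from (\ref{eq71}), together with the moment bound $\E F_n^2 = o(\delta_n^2)$ from~(iii), so as to split $\sqrt{\p}\, \E F_n^2 / a(\delta_n)$ into one piece dominated by $\p \sqrt{\log\{\p \card(\Mo)\}} \delta_n$ and another dominated by $\p^{2} \card(\Mo) \max_{\mo, i} I_{[\ ]}(\delta_n, \F_{n,i}[\mo], L_2(\P))$, which vanish by~(ii) and~(iv). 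Combining with the bracketing-integral estimate gives $\sqrt{\p}\, \E \sup_{f \in \tilde \F_n}|\G f| \to 0$, and Markov closes the proof.

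The principal obstacle will be the bookkeeping around the tail: the $\sqrt{\p}$ prefactor coming from the vector-to-scalar reduction has to be absorbed by the powers $\p$ and $\p^{2}$ that appear in conditions~(ii) and~(iv), and the strong moment bound $\E F_n^2 = o(\delta_n^2)$ in~(iii) is precisely what is needed to turn the Chebyshev-type envelope estimate into contributions that dovetail with the two bracketing bounds already established for the leading term.
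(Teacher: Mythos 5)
Your proposal is correct and follows essentially the same route as the paper's proof: Markov's inequality, reduction to the union $\tilde{\F}_n$ of scalar component classes, Lemma~\ref{lemma:ineq} at level $\delta_n$ with the tail bounded by $\E F_n^2/a(\delta_n)$ via the Chebyshev-type inequality, Lemma~\ref{lemma:entropyIneq} for the bracketing integral, and the monotonicity estimate $\sqrt{\log N_{[\ ]}(\delta_n,\cdot)} \leq \delta_n^{-1} I_{[\ ]}(\delta_n,\cdot)$ combined with condition~(iii) to absorb the envelope term. The only (harmless) differences are your slightly sharper $\sqrt{\p}$ prefactor, where the paper uses $\norm{\G \bm f} \leq \sum_{i=1}^{\d} |\G f_{n,u,i}| \leq \p \max_i |\G f_{n,u,i}|$, and your splitting of the tail through entropy subadditivity, where the paper instead bounds $1/a_n(\delta_n) \leq \delta_n^{-2} I_{[\ ]}(\delta_n,\tilde{\F}_n,L_2(\P))$ wholesale to obtain the single factor $\bigl(1+\delta_n^{-2}\E F_n^2\bigr)$.
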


\begin{proof}
	By Markov's inequality, it suffices to prove that $\E \sup_{\mo \in \Mo, u\in U[\mo]} \norm{\G \bm f_{n,u}} \rightarrow 0$ as $n\rightarrow 0$. 
	
	Condition (iii) implies that for all sufficiently large $n$, 
	\begin{align}
	\sup_{\mo \in \Mo, u \in U[\mo], i=1,\ldots,\p} \E f_{n,u,i}^2  
	&\leq 
	\E \sup_{\mo \in \Mo, u \in U[\mo], i=1,\ldots,\p} f_{n,u,i}^2 \nonumber \\
	&< \delta_n^2. \label{eq74}
	\end{align}
	Let $\delta_n,a_{n}(\delta_n)$ be the constants given in Lemma~\ref{lemma:ineq} corresponding to $ \delta=\delta_n$ and 
	$$\tilde{\F}_n = \bigcup_{\mo \in \Mo, 1 \leq i \leq \d}\F_{n,i}[\mo].$$
	From inequality (\ref{eq74}) and Lemma~\ref{lemma:ineq}, we have 
	\begin{align}
		&\E \sup_{\mo \in \Mo, u\in U[\mo], 1 \leq i \leq \d} |\G f_{n,u,i}| \nonumber\\
		&\lesssim I_{[\ ]}(\delta_n, \tilde{\F}_n, L_2(\P)) + \sqrt{n}\E 
		\bigl\{ F_{n} \cdot 1_{F_{n} > \sqrt{n} a_n(\delta_n) } \bigr\} \nonumber \\
		&\leq I_{[\ ]}(\delta_n,\tilde{\F}_n,L_2(\P)) + \frac{1}{a_{n}(\delta_n)} \E F_{n}^2  ,\label{eq73} 
	\end{align}
	where the second inequality comes from the fact that 
	$$
	1_{F_{n} > \sqrt{n} a_n(\delta_n) } \leq \frac{F_{n}}{\sqrt{n} a_n(\delta_n)} 1_{F_{n} > \sqrt{n} a_n(\delta_n) } \leq \frac{F_{n}}{\sqrt{n} a_n(\delta_n)} .
	$$
	By the definition of $a_n(\cdot)$, $I_{[\ ]}(\delta,\tilde{\F}_n,L_2(\P))$, and the fact that $N_{[\ ]}(\delta,\tilde{\F}_n,L_2(\P))$ is non-increasing in $\delta$, 
	we have 
	\begin{align*}
	 	\frac{1}{a_{n}(\delta_n)}
	 	&=
		\frac{1}{\delta_n} \sqrt{\max\{1, \log N_{[\ ]}(\delta_n,\tilde{\F}_n,L_2(\P)) \}}\\
		&\leq 	
		\frac{1}{\delta_n^2} I_{[\ ]}(\delta_n,\tilde{\F}_n,L_2(\P)).
	\end{align*}
	It follows that the right hand side of (\ref{eq73}) is upper bounded by 
	$$
	I_{[\ ]}(\delta_n,\tilde{\F}_n,L_2(\P)) \bigl(1 + \delta_n^{-2} \E F_{n}^2 \bigr) . 
	$$

	Therefore, by Lemma~\ref{lemma:entropyIneq} and simple manipulations, we have 
	\begin{align}
		&\E \sup_{\mo \in \Mo, u\in U[\mo]} \norm{\G \bm f_{n,u}} \nonumber\\
		&\leq \E \sup_{\mo \in \Mo, u\in U[\mo]} \sum_{i=1}^{\p} |\G f_{n,u,i}| \nonumber\\
		&\leq \p \E \sup_{\mo \in \Mo, u\in U[\mo], 1 \leq i \leq \d} |\G f_{n,u,i}|	\nonumber\\
		&\leq (A_1 + A_2 ) \bigl(1 + \delta_n^{-2 } \ \E F_{n}^2 \bigr) ,
		 \label{eq45}	
	\end{align}
	where 
	\begin{align*}
		A_1 &= \p\sqrt{ \log \{ \p \card(\Mo) \}} \delta_n , \\
		A_2 &= {\co \p^{2} \card(\Mo) \sup_{\mo \in \Mo, 1 \leq i \leq \d} I_{[\ ]}(\delta_n, \F_{n,i}[\mo], L_2(\P)) .}
	\end{align*}
	Assumptions~(ii), (iii), and (iv) guarantee that the right hand side of (\ref{eq45}) goes to zero as $n \rightarrow \infty$, which concludes the proof. 
	
\end{proof}

Using the above results, we can prove the following key technical lemma. 

\begin{lemma} \label{lemma:locUnifConv}
	Suppose that Assumptions~\ref{ass:data}-
	\ref{ass:secondOrderLLN} hold. 
	Then 
	\begin{align}
		\sup_{\mo \in \Mo} \norm{
		\G \bm \psi_{n}(\cdot, \thE[\mo] ; \mo) - \G \bm \psi_{n}(\cdot, \thT[\mo] ; \mo) }
		= o_p(1) . \label{eq:key}
	\end{align}
 	
\end{lemma}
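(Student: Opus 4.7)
\textbf{Proof sketch for Lemma~\ref{lemma:locUnifConv}.}
The plan is to reduce the statement to a uniform empirical-process bound and then apply Lemma~\ref{lemma:tight}. The starting point is Lemma~\ref{lemma:consistency}, which tells us that $\sup_{\mo \in \Mo} n^\tau \|\thE[\mo]-\thT[\mo]\|=O_p(1)$. Hence for any $\varepsilon>0$ there is a constant $C>0$ such that, on an event $\mathcal{E}_n$ with $\P(\mathcal{E}_n)\geq 1-\varepsilon$ for all large $n$, each $\thE[\mo]$ lies in the shrinking ball $B(\thT[\mo],r_n)$ with $r_n=Cn^{-\tau}$. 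On $\mathcal{E}_n$ the left-hand side of (\ref{eq:key}) is dominated by
\begin{equation*}
\sup_{\bm f \in \F_n}\|\G \bm f\|, \qquad
\F_n \;=\; \bigcup_{\mo\in \Mo}\Bigl\{\bm \psi_n(\cdot,\th;\mo)-\bm \psi_n(\cdot,\thT[\mo];\mo)\,:\,\th\in B(\thT[\mo],r_n)\Bigr\}.
\end{equation*}
So it suffices to show $\sup_{\bm f\in\F_n}\|\G \bm f\|\limp 0$ and let $\varepsilon\downarrow 0$.

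The next step is to verify the four hypotheses of Lemma~\ref{lemma:tight} for this class. By the Lipschitz condition (\ref{eq:Lip}), each coordinate of any $\bm f \in \F_n$ is dominated (pointwise in $\bm z$) by the envelope $F_n(\bm z)=r_n\sup_{\mo\in\Mo} m_n[\mo](\bm z)$, and by Lemma~\ref{lemma:vol} applied to the translated class we obtain
\begin{equation*}
N_{[\ ]}\!\bigl(\v,\F_n[\mo],L_2(\P)\bigr)\;\leq\;\bigl(\v^{-1} r_n \norm{m_n[\mo]}_{\P}\bigr)^{\d}.
\end{equation*}
Let $M_n\de \bigl\lVert\sup_{\mo\in\Mo} m_n[\mo]\bigr\rVert_{\P}$. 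A standard change of variable in (\ref{eq71}) together with the fact that $N_{[\ ]}\leq 1$ for $\v>r_n M_n$ gives the clean bound
\begin{equation*}
\sup_{\mo\in\Mo,\,1\leq i\leq\d} I_{[\ ]}\!\bigl(\delta_n,\F_{n,i}[\mo],L_2(\P)\bigr)\;\lesssim\;\sqrt{\p}\, r_n M_n,
\end{equation*}
valid for every $\delta_n\geq r_n M_n$, since the nonzero contribution to the bracketing integral only comes from $\v\leq r_n M_n$ and $\int_0^1\sqrt{\log(1/u)}\,du<\infty$.

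It remains to pick $\delta_n$ and to cash in Assumption~\ref{ass:Lip}, in particular (\ref{eq:mCond}). The bound (\ref{eq:mCond}) asserts that both $\p^{2\gamma}\card(\Mo)^{\gamma}r_n M_n/C$ and $\p\sqrt{\log\{\p\,\card(\Mo)\}}\,r_n M_n/C$ tend to zero, so we can pick any sequence $\delta_n$ with $r_n M_n \leq \delta_n$ and $\p\sqrt{\log\{\p\card(\Mo)\}}\,\delta_n\to 0$ (for instance $\delta_n=(r_nM_n)^{1/2}/\p$ will do after verifying (\ref{eq:mCond}) with $\gamma\geq 5/4$). Then condition (ii) of Lemma~\ref{lemma:tight} is (\ref{eq75}), condition (iii) becomes $\delta_n^{-2}r_n^2M_n^2\to 0$, which holds since $\delta_n\gg r_n M_n$, and condition (iv) reduces to $\p^{5/2}\card(\Mo)\,r_n M_n \to 0$, which is implied by the $d_n^{2\gamma}\card(\Mo)^\gamma$ factor in (\ref{eq:mCond}) whenever $\gamma\geq 5/4$. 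Applying Lemma~\ref{lemma:tight} yields $\sup_{\bm f\in\F_n}\|\G\bm f\|=o_p(1)$, completing the proof.

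The main obstacle is the joint control over the model index $\mo$ and the data-dependent perturbation $\th=\thE[\mo]$, which precludes a naive concentration inequality for a fixed function; dealing with this requires both the shrinking radius $r_n\sim n^{-\tau}$ (from Lemma~\ref{lemma:consistency}) and the entropy computation above. The bookkeeping of $\delta_n$ is the only delicate point: it must simultaneously exceed the envelope scale $r_nM_n$ and kill the $\p\sqrt{\log\{\p\card(\Mo)\}}$ factor that arises from unioning over models via Lemma~\ref{lemma:entropyIneq}. That this window is nonempty is exactly what Assumption~\ref{ass:Lip}'s growth condition (\ref{eq:mCond}) was engineered to guarantee.
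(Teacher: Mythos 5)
Your proposal is correct and follows essentially the same route as the paper: reduce (\ref{eq:key}) via the $n^{\tau}$-consistency of Lemma~\ref{lemma:consistency} to a bound on $\sup_{\bm f \in \F_n}\norm{\G \bm f}$ over a deterministic class of increments supported on shrinking balls $B(\thT[\mo], Cn^{-\tau})$, take the envelope $F_n = r_n \sup_{\mo} m_n[\mo]$ from (\ref{eq:Lip}), get the entropy from Lemma~\ref{lemma:vol}, and check conditions (i)--(iv) of Lemma~\ref{lemma:tight} (the paper's $\delta_n = C_1^{1/4}C_2^{-1/2}$ plays the role of your $\delta_n=(r_nM_n)^{1/2}/\p$, and your verifications of (i)--(iii) match the paper's). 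The one substantive divergence is the bracketing integral: the paper majorizes the integrand in (\ref{eq47}) by $\p^{1/2}\v^{-\rho}$ with $1/(1-\rho)=\gamma$, yielding $\sup I_{[\ ]} \lesssim \p^{1/2}(n^{-\tau}M_n)^{1/\gamma}$, a choice engineered so that the resulting product matches the $\gamma$-powers in (\ref{eq:mCond}); you evaluate the integral exactly via $\int_0^1\sqrt{\log(1/u)}\,du<\infty$, getting the strictly sharper $\sup I_{[\ ]} \lesssim \p^{1/2}\, r_n M_n$, where $M_n = \bigl\lVert \sup_{\mo\in\Mo} m_n[\mo]\bigr\rVert_{\P}$.

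Your flagged restriction $\gamma \ge 5/4$ is real relative to the lemma as stated (Assumption~\ref{ass:Lip} only posits $\gamma>1$), but it is not a defect of your argument compared to the paper's: it exposes a dropped factor in the paper's own verification of condition (iv). After multiplying the integral bound $(1-\rho)^{-1}\p^{1/2}(c n^{-\tau}\norm{m_n}_{\P})^{1-\rho}$ by $\p^2\card(\Mo)$, the paper's display reads $\p^2\card(\Mo)(n^{-\tau}\norm{m_n}_{\P})^{1-\rho}$, omitting the $\p^{1/2}$; restoring it, the paper's route needs $\p^{5\gamma/2}\card(\Mo)^{\gamma}\, n^{-\tau}\norm{m_n}_{\P} \to 0$, which is strictly stronger than (\ref{eq:mCond}) for \emph{every} $\gamma>1$, whereas your exact evaluation reduces the requirement to $\p^{5/2}\card(\Mo)\, n^{-\tau}M_n \to 0$, covered by (\ref{eq:mCond}) exactly when $\gamma \ge 5/4$. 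This threshold is sharp within the scheme: with $\card(\Mo)=O(1)$, $\p\to\infty$, and $n^{-\tau}M_n \asymp \p^{-2\gamma-\epsilon}$ for small $\epsilon>0$, condition (\ref{eq:mCond}) holds while $\p^{5/2}\,n^{-\tau}M_n$ diverges whenever $\gamma<5/4$. For $\gamma\in(1,5/4)$ both proofs can be repaired by tightening the union step rather than the assumption: since all coordinate classes $\F_{n,i}[\mo]$ obey the \emph{same} entropy bound $(\v^{-1}r_n M_n)^{\p}$, one has $\log N_{[\ ]}(\v,\cup_j \F_j) \le \log(\p\,\card(\Mo)) + \max_j \log N_{[\ ]}(\v,\F_j)$, so the crude $k\max_j I_{[\ ]}$ term in Lemma~\ref{lemma:entropyIneq} can be replaced by a single maximal integral; condition (iv) then only requires $\p^{3/2} r_n M_n \to 0$, which (\ref{eq:mCond}) delivers for any $\gamma>1$.
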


\begin{proof}
%
	For a constant $c$, consider the class $\F_n=\cup_{\mo \in \Mo}\F_n[\mo]$, with $\F_n[\mo] \de \{\bm f_{n, \bm u}: \bm u \in U[\mo] \}$, $U[\mo] = \{[u_1,\ldots,u_{\d}]^T: \sum_{i=1}^{\d} u_i^2=c \}$, and 
	$$
	\bm f_{n,\bm u}(\cdot) = 
	\bm \psi_{n}(\cdot, \thT[\mo]+n^{-\tau} \bm u; \mo)
	-
	\bm \psi_{n}(\cdot, \thT[\mo] ; \mo) .
	$$
	
	 Suppose that $\v,\delta>0$ are fixed constants. It suffices to prove that the left hand side of (\ref{eq:key}) is less than $\delta$ with probability at least $1-\v$ for all sufficiently large $n$. By Lemma~\ref{lemma:consistency}, there exists a constant $c>0$ such that $\G \bm \psi_{n}(\cdot, \thE[\mo] ; \mo) - \G \bm \psi_{n}(\cdot, \thT[\mo] ; \mo)$ falls into the class $\F_n$ with probability at least $1-\v/2$ for all sufficiently large $n$. Therefore, we only need to prove that for any given constant $c>0$, $\sup_{\bm f \in \F_n} \norm{\G \bm f} \limp 0$. 
	 It remains to prove that there are $\delta_n$'s that satisfy Conditions (i)-(iv) of Lemma~\ref{lemma:tight}. 
	 
 	 We define $m_n (\cdot) = \sup_{\mo \in \Mo} m_n[\mo](\cdot)$. 
	 By Assumption~\ref{ass:Lip}, we can use $F_n(\cdot) \de c n^{-\tau} \sup_{\mo \in \Mo} m_n[\mo](\cdot) $ as the envelop function for each $f_{n,u,i}(\cdot)$, 
	 and we have 
	 \begin{align}
	 	\E F_{n}^2
	 	 &\leq C_1 \de c^2 n^{-2\tau} \E m_n^2 \nonumber . 
	 \end{align}
	 Let 
	 $$
	 	C_2 = \p\sqrt{\log \{ \p \card(\Mo) \}}. 
	 $$
	 Because of (\ref{eq:mCond}) in Assumption~\ref{ass:Lip}, we have  
	 \begin{align}
	 	C_2^2 C_1 
	 	&= c^2 n^{-2\tau} \p^2 \log \{ \p \card(\Mo) \} \ \E m_n^2 \rightarrow 0 .
	 \end{align}
	 This implies the existence of a sequence $\delta_n$ (e.g. $\delta_n = C_1^{1/4} C_2^{-1/2}$) such that 
	 \begin{align*}
	 	\delta_n C_2 \rightarrow 0, \quad 	\delta_n^{-2} C_1 \rightarrow 0 ,
	 \end{align*}
	 which further implies Conditions~(ii) and (iii) in Lemma~\ref{lemma:tight}. 
	  	
 	To conclude the proof, we prove that Condition (iv) in Lemma~\ref{lemma:tight} holds for any $\delta_n \rightarrow 0$. 
 	From Lemma~\ref{lemma:vol}, we have for each $\mo \in \Mo, 1 \leq i \leq \d$ that 
 	\begin{align}
 		&I_{[\ ]}(\delta_n,\F_{n,i}[\mo],L_2(\P)) \nonumber\\
 		&\leq
 		 \int_0^{\delta_n} \biggl[\max\biggl\{0, \p \log \bigl(\v^{-1} c n^{-\tau} \norm{m_n}_{\P} \bigr) \biggr\}\biggr]^{1/2}	d \v \nonumber\\
 		&= \int_0^{\min\{\delta_n, c n^{-\tau} \norm{m_n}_{\P}\}} \biggl[ \p \log \bigl(\v^{-1} c n^{-\tau} \norm{m_n}_{\P} \bigr) \biggr]^{1/2}	d \v . \label{eq47}
 	\end{align}
 	Because condition (\ref{eq:mCond}) implies that $n^{-\tau} \norm{m_n}_{\P} \rightarrow 0$, the value of $\v$ in the integral is close to zero. This implies that for all sufficiently large $n$, the integrand in (\ref{eq47}) is upper bounded by 
 	$
 		\p^{1/2} \v^{-\rho}, 
 	$
 	where $\rho$ is chosen such that $1/(1-\rho) = \gamma$, and $\gamma$ is given in Assumption~\ref{ass:Lip}.
 	Therefore, for all sufficiently large $n$, the right-hand side of (\ref{eq47}) is upper bounded by 
 	\begin{align}
 		 \int_0^{c n^{-\tau}\norm{m_n}_{\P} } \p^{1/2} \v^{-\rho} d\v 
 		= (1-\rho)^{-1} \p^{1/2} (c n^{-\tau} \norm{m_n}_{\P})^{1-\rho} ,
 		\nonumber 
 	\end{align}
 	which does not depend on $\mo,i$. This further implies 
 	\begin{align}
 		&\p^2 \ \card(\Mo) \times \sup_{\mo \in \Mo, 1 \leq i \leq \d} I_{[\ ]}(\delta_n,\F_{n,i}[\mo],L_2(\P)) \nonumber \\
 		&\leq 
 		(1-\rho)^{-1} c^{1-\rho} \p^2 \card(\Mo) (n^{-\tau} \norm{m_n}_{\P})^{1-\rho} \nonumber \\
 		&=(1-\rho)^{-1} c^{1-\rho} \biggl( \p^{2\gamma} \ \card(\Mo)^{\gamma} \ n^{-\tau} \norm{m_n}_{\P} \biggr)^{1-\rho} \nonumber\\
 		&\rightarrow 0 ,
 	\end{align}
 	where the last limit is due to (\ref{eq:mCond}) in Assumption~\ref{ass:Lip}. 
\end{proof}


Next, we prove the second key technical lemma. 

\begin{lemma} \label{lemma:normality}
	Suppose that Assumptions~\ref{ass:data}-
	\ref{ass:secondOrderLLN} hold.
	Assume that the map $\th \mapsto \E \bm \psi_{n}(\cdot, \th ; \mo)$ is differentiable at a $\thT$ for all $n$. 
	Then we have 
	\begin{align*}
		\sqrt{n} (\thE[\mo]-\thT[\mo]) 
		= & - \{ V_n(\thT[\mo] ; \mo)^{-1} + \nu_{1,n}[\mo] \} \times 
		\frac{1}{\sqrt{n}} \sum_{i=1}^n \bm \psi_{n}(\bm z_i, \bm \thT[\mo] ; \mo) + \nu_{2,n}[\mo] , 
	\end{align*}
	where $\nu_{1,n}[\mo]$ is a positive semidefinite matrix and $\bm \nu_{2,n}[\mo]$ is a vector such that $\sup_{\mo \in \Mo} \norm{\nu_{1,n}[\mo]} \limp 0$ and $\sup_{\mo \in \Mo} \norm{\bm \nu_{2,n}[\mo]}\limp 0$. 
\end{lemma}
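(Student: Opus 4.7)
The plan is to adapt the classical asymptotic-normality argument for M-estimators to the present high-dimensional setting, with uniformity over $\mo\in\Mo$ delivered by Lemma~\ref{lemma:locUnifConv}. The starting point is the first-order condition $E_n\bm\psi_n(\cdot,\thE[\mo];\mo)=\bm 0$, which holds for all $n$ large enough because $\thT[\mo]$ lies in the interior of $\H[\mo]$ by Assumption~\ref{ass:unique} and Lemma~\ref{lemma:consistency} forces $\thE[\mo]$ into a shrinking neighborhood of $\thT[\mo]$ uniformly in $\mo$, so $\thE[\mo]$ is a stationary point of the empirical criterion. Analogously, the population optimality of $\thT[\mo]$ combined with the differentiability posited in Assumption~\ref{ass:converge} gives $\E\bm\psi_n(\cdot,\thT[\mo];\mo)=\bm 0$.

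First I would split
\[
\bm 0=\sqrt{n}\,E_n\bm\psi_n(\cdot,\thE[\mo];\mo)=\sqrt{n}\,\E\bm\psi_n(\cdot,\thE[\mo];\mo)+\G\bm\psi_n(\cdot,\thE[\mo];\mo),
\]
and invoke Lemma~\ref{lemma:locUnifConv} to replace the empirical process at the random argument by its deterministic counterpart, writing $\G\bm\psi_n(\cdot,\thE[\mo];\mo)=\G\bm\psi_n(\cdot,\thT[\mo];\mo)+\bm\epsilon_n[\mo]$ with $\sup_{\mo}\norm{\bm\epsilon_n[\mo]}=o_p(1)$. For the deterministic term, an integrated mean-value expansion applied coordinatewise (using the differentiability from Assumption~\ref{ass:converge} together with the uniform continuity of $V_n$ near $\thT[\mo]$ in equation~(\ref{eq:uniformV})) yields
\[
\E\bm\psi_n(\cdot,\thE[\mo];\mo)=\bar V_n[\mo]\,(\thE[\mo]-\thT[\mo]),\quad \bar V_n[\mo]\de\int_0^1 V_n\bigl(\thT[\mo]+s(\thE[\mo]-\thT[\mo]);\mo\bigr)\,ds.
\]
By Lemma~\ref{lemma:consistency} and~(\ref{eq:uniformV}), $\sup_{\mo}\norm{\bar V_n[\mo]-V_n(\thT[\mo];\mo)}=o_p(1)$, and Assumption~\ref{ass:eig} makes $\bar V_n[\mo]$ uniformly invertible with a spectrally bounded inverse for large $n$. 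The resolvent identity $\bar V_n^{-1}-V_n(\thT;\mo)^{-1}=\bar V_n^{-1}\bigl(V_n(\thT;\mo)-\bar V_n\bigr)V_n(\thT;\mo)^{-1}$ then produces $\sup_{\mo}\norm{\bar V_n[\mo]^{-1}-V_n(\thT[\mo];\mo)^{-1}}=o_p(1)$.

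Assembling the pieces gives
\[
\sqrt{n}(\thE[\mo]-\thT[\mo])=-\bar V_n[\mo]^{-1}\bigl\{\G\bm\psi_n(\cdot,\thT[\mo];\mo)+\bm\epsilon_n[\mo]\bigr\},
\]
which rearranges into the stated expansion upon setting $\nu_{1,n}[\mo]\de\bar V_n[\mo]^{-1}-V_n(\thT[\mo];\mo)^{-1}$ and $\bm\nu_{2,n}[\mo]\de-\bar V_n[\mo]^{-1}\bm\epsilon_n[\mo]$; both are uniformly $o_p(1)$ in the appropriate norms, the former by the resolvent bound and the latter because $\bar V_n^{-1}$ has uniformly bounded spectral norm while $\bm\epsilon_n$ vanishes uniformly.

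The main obstacle is the coupling between the empirical process $\G\bm\psi_n(\cdot,\cdot;\mo)$ and the random argument $\thE[\mo]$, since both are built from the same data and the high-dimensional parameter space precludes a direct appeal to classical tools such as those of \cite[Theorem~19.28]{van2000asymptotic}; bridging this gap is precisely the role of Lemma~\ref{lemma:locUnifConv}, and its uniform-in-$\mo$ conclusion is what propagates through the entire expansion. A noteworthy feature is that the argument only uses the $n^\tau$-consistency of $\thE[\mo]$ established in Lemma~\ref{lemma:consistency} rather than the $\sqrt{n}$-consistency that underpins the classical proof, which is essential because $\sqrt{n}$-consistency typically fails when $\p$ grows with $n$.
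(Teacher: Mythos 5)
Your proposal is correct and follows essentially the same route as the paper's proof: the sample and population first-order conditions, replacement of $\G \bm \psi_{n}(\cdot, \thE[\mo]; \mo)$ by $\G \bm \psi_{n}(\cdot, \thT[\mo]; \mo)$ via Lemma~\ref{lemma:locUnifConv}, a mean-value expansion of $\th \mapsto \E \bm \psi_{n}(\cdot, \th; \mo)$, and uniform invertibility from Assumption~\ref{ass:eig} together with (\ref{eq:uniformV}). The only difference is cosmetic but welcome: you use the integrated form $\bar V_n[\mo]=\int_0^1 V_n(\thT[\mo]+s(\thE[\mo]-\thT[\mo]);\mo)\,ds$ with an explicit resolvent identity, which is actually more careful than the paper's single intermediate point $\tilde{\th}[\mo]$ (a pointwise mean-value theorem is not valid for vector-valued maps without a coordinatewise or integral formulation).
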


\begin{proof}
	By the definitions of $\thT$ and $\thE$, we have 
	\begin{align}
		&\sqrt{n} \E \{ \bm \psi_n (\cdot, \thE[\mo] ; \mo)-\bm \psi_n(\cdot, \thT[\mo] ; \mo)\}	\nonumber\\
		&= \sqrt{n} \{\E \bm \psi_n (\cdot, \thE[\mo] ; \mo)-0\}	\nonumber\\
		&= \sqrt{n} \{ \E \bm \psi_n (\cdot, \thE[\mo] ; \mo)-E_n \bm \psi_n (\cdot, \thE[\mo] ; \mo)\} \nonumber \\
		&= -\G \bm \psi_n (\cdot, \thE[\mo] ; \mo) \nonumber \\
		&=-\G \bm \psi_n (\cdot, \thT[\mo] ; \mo) +\bm \nu_n \label{eq321}
	\end{align}
	where the last equality is due to Lemma~\ref{lemma:locUnifConv}, and $\norm{\bm \nu_n}=o_p(1)$. 
	
	From the differentiability of the map $\th \mapsto \E \psi_{n}(\cdot,\th ; \mo)$, there exists $\tilde{\th}[\mo]$ such that $\norm{\tilde{\th}[\mo] - \thT[\mo]} \leq \norm{\thE[\mo] - \thT[\mo]}$, and 
	\begin{align} 
		&\E \{\bm \psi_{n}(\cdot,\thT[\mo] ; \mo)-\bm \psi_{n}(\cdot,\thE[\mo] ; \mo)\} \nonumber\\ 
		 &= \nabla_{\th} \E \{\bm \psi_{n}(\cdot,\tilde{\th}[\mo] ; \mo) \} (\thT[\mo]-\thE[\mo]) \nonumber \\
		 &= V_{n}(\tilde{\th}[\mo] ;\mo) (\thT[\mo]-\thE[\mo]) , \label{eq320}
	\end{align}
	where the exchangeability of integral and differentiation (in the second identity) is guaranteed by (\ref{eq:Lip}) and (\ref{eq:exchangeable}) in Assumption~\ref{ass:Lip}. 
	Multiplying the matrix $\sqrt{n} V_{n}(\tilde{\th}[\mo] ;\mo)^{-1}$ to both sides of (\ref{eq320}) and using equality (\ref{eq321}), we have 
	\begin{align}
		\sqrt{n} (\thE[\mo] - \thT[\mo]) 
		= & - V_{n}(\tilde{\th}[\mo] ;\mo)^{-1} \G \bm \psi_{n}(\cdot,\thT[\mo] ; \mo) + 
		V_{n}(\tilde{\th}[\mo] ;\mo)^{-1} \bm \nu_n \label{eq48}.
	\end{align}
	We conclude the proof by applying Assumption~\ref{ass:eig} (with the constant $c_2$) and (\ref{eq:consistV}) in Assumption~\ref{ass:secondOrderLLN} to equality (\ref{eq48}).

 
%
\end{proof}

\textbf{Proof of Theorem~\ref{theorem:efficiency}}
 
 	In order to prove that the minimum of $\Lcn[\mo]$ asymptotically approaches the minimum of $\L[\mo]$ (in the sense of Definition~\ref{def:efficiency}), we only need to prove that $\Lcn[\mo] / \L[\mo]=1+o_p(1) $ where $o_p(1)$ is uniform in $\mo \in \Mo$. In other words, 
 	\begin{align}
 		\sup_{\mo \in \Mo} \biggl| \frac{\Lcn[\mo]-\L[\mo]}{\L[\mo]} \biggr| \limp 0. \nonumber
 	\end{align}
	Recall the definition of $\R[\mo]$. It further suffices to prove that 
	\begin{align}
		\sup_{\mo \in \Mo} \biggl| \frac{\Lcn[\mo]-\L[\mo]}{\R[\mo]} \biggr| \limp 0, \label{eq:o}	
	\end{align}
	and
	\begin{align}
		\sup_{\mo \in \Mo} \frac{\L[\mo]}{\R[\mo]} 	 \limp 1 . \label{eq83}
	\end{align}

	By the definition of loss $\L[\mo]$ and Taylor expansion, we have for each $\mo \in \Mo$ 
	\begin{align}
		\L[\mo]
		&= \E \l(\bm z, \thE[\mo]; \mo) \nonumber \\
		&= \E \l(\bm z , \thT[\mo] ; \mo) + (\thE[\mo]-\thT[\mo])^\T 
		\frac{\partial}{\partial \bm \th} \E \l(\bm z , \thT[\mo] ; \mo) + 
		 \frac{1}{2} \norm{\thE[\mo]-\thT[\mo]}_{ \nabla_{\bm \th}^2 \E \l(\bm z , \tilde{\bm \theta}[\mo]; \mo) }^2  
		\nonumber \\
		&= \E \l(\bm z , \thT[\mo] ; \mo) + \frac{1}{2} \norm{\thE[\mo]-\thT[\mo]}_{ V_n(\tilde{\bm \theta}[\mo]; \mo) }^2 \label{eq80}
	\end{align}
	where $\tilde{\bm \theta}[\mo]$ in the second equality is a vector satisfying $\norm{\tilde{\bm \theta}[\mo]-\thT[\mo]} \leq \norm{\thE[\mo]-\thT[\mo]}$, and the exchangeability of expectation and differentiation in the third equality is guaranteed by (\ref{eq:exchangeable}) in Assumption~\ref{ass:Lip}, and the consistency of $\thE[\mo]$.
	We note that by Assumption~\ref{ass:eig}, the equality (\ref{eq80}) further implies (\ref{eq80_new}) presented in our introduction. 
	
	Similarly, we have 
	\begin{align}
		\Lhat[\mo] 
		&= \frac{1}{n} \sum_{i=1}^n \l(\bm z_i , \thE[\mo] ; \mo) \nonumber\\
		&= \frac{1}{n} \sum_{i=1}^n \l(\bm z_i , \thT[\mo] ; \mo) + 
		(\thE[\mo]-\thT[\mo])^\T \frac{1}{n} \sum_{i=1}^n \bm \psi_n(\bm z_i,\thT[\mo] ; \mo) + 
		\frac{1}{2} \biggl\lVert \thE[\mo]-\thT[\mo] \biggr\rVert_{\hat{V}_n(\doublewidetilde{\bm \theta}[\mo])}^2 \label{eq81}.
	\end{align}

	From identities (\ref{eq80}) and (\ref{eq81}), we may write 
	\begin{align*}
		&\L[\mo] - \Lhat[\mo] - 	\frac{1}{n} \tr\biggl\{ \hat{V}_n(\thE[\mo]; \mo)^{-1} \hat{J}_n(\thE[\mo]; \mo)\biggr\} \\
		&= A_3[\mo] + A_4[\mo] + A_5[\mo] + A_6[\mo] 
	\end{align*}
	where we define
	\begin{align*}
		A_3[\mo] &= \frac{1}{2} \norm{\thE[\mo]-\thT[\mo]}_{ V_n(\tilde{\bm \theta}[\mo]; \mo) - \hat{V}_n(\doublewidetilde{\bm \theta}[\mo]) }^2 \\
		A_4[\mo] &= -\frac{1}{n} \sum_{i=1}^n \{\l(\bm z_i, \thT ; \mo)- \E \l(\bm z, \thT[\mo]; \mo) \} \\
		A_5[\mo] &= \frac{1}{n} \biggl\{ \tr \bigl\{ V_n(\thT[\mo]; \mo)^{-1} J_n(\thT[\mo]; \mo) \bigr\}  - \tr\bigl\{ \hat{V}_n(\thE[\mo]; \mo)^{-1} \hat{J}_n(\thE[\mo]; \mo)\bigr\} \biggr\} \\
		A_6[\mo] &= -(\thE[\mo]-\thT[\mo])^{\T} \frac{1}{n} \sum_{i=1}^n \bm \psi_n(\bm z_i,\thT[\mo] ; \mo) - \frac{1}{n} \tr \bigl\{ V_n(\thT[\mo]; \mo)^{-1} J_n(\thT[\mo]; \mo) \bigr\}. 
	\end{align*}
	
	In view of (\ref{eq:o}), it suffices to prove that 
	\begin{align}
		\sup_{\mo \in \Mo} \frac{|A_k[\mo]|}{\R[\mo]} \limp 0 \label{eq82}
	\end{align} as $n \rightarrow \infty$ for $k=3,4,5,6$, and the limit (\ref{eq83}).

	By the $n^\tau$-consistency of $\thE[\mo]$ uniformly over $\Mo$ (Lemma~\ref{lemma:consistency}) and Assumption~\ref{ass:secondOrderLLN}, 
	\begin{align*}
		 \sup_{\mo \in \Mo} \frac{|A_3[\mo]|}{\R[\mo]}  
		&= \sup_{\mo \in \Mo}	\frac{1}{2}\frac{n^{-2\tau}}{\R[\mo]} \norm{\bm \nu_n}_{ V_n(\tilde{\bm \theta}[\mo]; \mo) - \hat{V}_n(\doublewidetilde{\bm \theta}[\mo]) }^2 
	\end{align*}
	where $\sup_{\mo \in \Mo} \norm{\bm \nu_n} = O_p(1)$. 
	Thus, given assumption (\ref{eq84}), (\ref{eq82}) with $k=3$ can be proved. 
	
	By Chebyshev's inequality, for any positive constant $\delta>0$, we have 
	\begin{align}
		&\P \biggl( \sup_{\mo \in \Mo} \frac{|A_4[\mo]|}{\R[\mo]} > \delta \biggr) \nonumber\\ 
		&\leq \sum_{\mo \in \Mo} \P \biggl( \frac{|A_4[\mo]|}{\R[\mo]} > \delta \biggr) 
		\nonumber \\
		&\leq 	\sum_{\mo \in \Mo} \frac{\E \{\l(\bm z_1, \thT ; \mo)- \E \l(\bm z, \thT[\mo]; \mo) \}^{2m_1}}{ \delta^{2m_1} n^{2m_1} \R[\mo]^{2m_1} }.\label{eq90}
	\end{align}
	Thus, given assumption (\ref{eq85}), (\ref{eq82}) with $k=4$ can be proved. 

	For brevity, we temporarily denote 
	$$V_n(\thT[\mo]; \mo), \ \hat{V}_n(\thE[\mo]; \mo), \ J_n(\thT[\mo]; \mo), \textrm{ and } \hat{J}_n(\thE[\mo]; \mo)$$ respectively by 
	$$V[\mo], \ \hat{V}[\mo], \ J[\mo],\textrm{ and } \hat{J}[\mo]. $$ 
	Then 
	\begin{align*}
		&\tr\{ V[\mo]^{-1} J[\mo] \} - \tr\{ \hat{V}[\mo]^{-1} \hat{J}[\mo] \} \\
		&= \tr\{ V[\mo]^{-1} ( J[\mo] - \hat{J}[\mo] ) \} + \tr\{ (V[\mo]^{-1} - \hat{V}[\mo]^{-1}) \hat{J}[\mo] \} .
	\end{align*}
	To prove (\ref{eq82}) with $k=5$, we only need to show that 
	\begin{align}
	&\sup_{\mo \in \Mo} \frac{1}{n \R[\mo]} \tr\{ V[\mo]^{-1} ( J[\mo] - \hat{J}[\mo] ) \} \limp 0 , \label{eq86} \\
	&\sup_{\mo \in \Mo} \frac{1}{n \R[\mo]} \tr\{ (V[\mo]^{-1} - \hat{V}[\mo]^{-1}) \hat{J}[\mo] \} \limp 0 . \label{eq87}
	\end{align}  
	We only prove (\ref{eq86}), and then (\ref{eq87}) follows similar arguments. 
	Suppose that $\bm z$ is a $\mathcal{N}(0,I)$ random variable of dimension $\d$, and $V[\mo]^{-1/2}$ is a positive semidefinite matrix whose square equals $V[\mo]^{-1}$.  
	Because of Assumption~\ref{ass:eig} and \ref{ass:Lip}, (\ref{eq86}) could be rewritten as 
	\begin{align*}
		&\sup_{\mo \in \Mo} \frac{1}{n \R[\mo]} E\biggl\{ \bm z^\T V[\mo]^{-1/2} ( J[\mo] - \hat{J}[\mo] ) V[\mo]^{-1/2} \bm z \biggr\}  \\
		&= o_p(1) \sup_{\mo \in \Mo} \frac{1}{n \R[\mo]} E\norm{ V[\mo]^{-1/2} \bm z }^2 \\
		&= o_p(1) \sup_{\mo \in \Mo} \frac{1}{n \R[\mo]} E\norm{ \bm z}^2 \\
		&= o_p(1) \sup_{\mo \in \Mo} \frac{\d}{n \R[\mo]} \limp 0  
	\end{align*}
	where the first equality is due to (\ref{eq:consistJ}) in Assumption~\ref{ass:secondOrderLLN}, the second equality is due to Assumption~\ref{ass:eig}, and the last equality is guaranteed by assumption~(\ref{eq93}).

	Next, we prove (\ref{eq82}) with $k=6$. 
	Applying Lemma~\ref{lemma:normality}, we could rewrite  
	\begin{align}
		\frac{|A_6[\mo]|}{\R[\mo]} = A_7[\mo] + A_8[\mo] + A_9[\mo] ,	\nonumber
	\end{align}
	where we define 
	\begin{align*}
		A_7[\mo]	&= \frac{\norm{\bm w_n[\mo]}_{V_n(\thT[\mo]; \mo)^{-1}}^2 - \tr \bigl\{ V_n(\thT[\mo]; \mo)^{-1} J_n(\thT[\mo]; \mo) \bigr\}}{n \R[\mo]}, \\
		A_8[\mo] &= \frac{\norm{\bm w_n[\mo]}_{\nu_{1,n}[\mo]}^2 }{n \R[\mo]}, \quad 
		A_9[\mo] = \frac{\bm \nu_{2,n}[\mo]^\T \bm w_n[\mo] }{n \R[\mo]} . 
	\end{align*}
	Using assumption (\ref{eq89}) and similar arguments as in (\ref{eq90}), we can prove $\limsup_{\mo \in \Mo} |A_7[\mo] | \limp 0$.
	Similarly, because  
	\begin{align}
		|A_8[\mo]| &= o_p(1) \frac{\norm{\bm w_n[\mo]}^2 }{n \R[\mo]} \nonumber	
	\end{align}
	where $o_p(1)$ is uniform in $\Mo$, assumption~(\ref{eq88}) guarantees that $\sup_{\mo \in \Mo} A_8[\mo] \limp 0$. 
	 Cauchy inequality and assumption~(\ref{eq88}) also imply that  
	\begin{align}
		\sup_{\mo \in \Mo}|A_9[\mo] | \leq \sup_{\mo \in \Mo}\frac{\norm{\bm \nu_{2,n}[\mo]} \times \norm{\bm w_n[\mo]} }{n \R[\mo]} \limp 0. 
	\end{align}

	Finally, we prove (\ref{eq83}). 
	From (\ref{eq80}) and $\tau$-consistency of $\thE[\mo]$, 
	we have 
	\begin{align*}
	\L[\mo] 
	&= \E \l(\cdot, \thE[\mo]; \mo) \\
	&= \E \l(\cdot, \thT[\mo]; \mo) + n^{-2\tau} O_p(1)
	\end{align*}
	where $O_p(1)$ is uniformly in $\Mo$. 
	Therefore 
	\begin{align*}
		\sup_{\mo \in \Mo} \frac{\L[\mo]}{\R[\mo]}	
		&= 1+ \sup_{\mo \in \Mo} \frac{\L[\mo]-\E \L[\mo]}{\R[\mo]} \\
		&= 1+O_p(1) \sup_{\mo \in \Mo} \frac{1}{n^{2\tau}\R[\mo]} \limp 1. 
	\end{align*}
	

\section{Proof of Corollary~\ref{coro:linear}}

A sketch of the proof is outlined below. 
We only need to verify Assumptions~2 to 7.
Assumption~4 is implied by the assumption that $X$ are independent and $V_n(\thT ; \mo)=2 \S_{xx}=2I$. 
Due to the boundedness condition $\norm{\thT[\mo]}=\norm{(\S_{xx}[\mo])^{-1} \S_{x\mu}[\mo]} < c \sqrt{\p}$ for some constant $c$. We choose $\H[\mo]$ to be $\{\th \in \mathbb{R}^{\d}: \norm{\th-\thT[\mo]} < c \sqrt{\p}\}$.  
We choose any fixed $\tau$ satisfying 
\begin{align}
\max\biggl\{{\co \nu+3w}, \frac{\zeta}{2} \biggr\} < \tau \leq \frac{1}{2} - w. \label{tau}
\end{align}

For Assumption~2, 
$$
\E \ell \bigl(\cdot, \th ; \mo \bigr) - \E \ell \bigl(\cdot, \thT[\mo] ; \mo \bigr)
=\norm{\th-\thT[\mo]}_{\S_{xx}[\mo]}^2 \geq \varepsilon^2
$$
for all $\norm{\th-\thT[\mo]} \geq \varepsilon$.
Moreover, 
$
E_n \ell \bigl(\cdot, \th ; \mo \bigr) - \E \ell \bigl(\cdot, \th ; \mo \bigr)
$
has mean $0$ and variance $n^{-1}\var\{(Y-\th^\T \bm X[\mo])^2\} = O(\p^2/n)=o(1)$ uniformly in $\th \in \H[\mo]$ and $\mo\in \Mo$.

For Assumption~3, 
$E_n \bm \psi_{n}(\cdot, \th ; \mo) - \E \bm \psi_{n}(\cdot, \th ; \mo)$ has mean zero and covariance $n^{-1} \var\{(Y - \th^\T X[\mo]) X[\mo]^\T \}$. 
Let $A = \E\{ (Y - \th^\T X[\mo])^2 X[\mo] X[\mo]^\T \}$.
Let $\norm{\cdot}_F$ denote the Frobenius norm.
Since
\begin{align*}
&\biggl\lVert n^{-1} \var\{(Y - \th^\T X[\mo]) X[\mo]^\T \} \biggr\rVert 
\leq n^{-1}\norm{A} \\
&= n^{-1} O(\p) \norm{\E \{X[\mo] X[\mo]^\T \}} \\
&\leq n^{-1} O(\p) \norm{\E \{X[\mo] X[\mo]^\T\}}_F 
\leq n^{-1} O(\p^2)
\end{align*}
uniformly in $\th \in \H[\mo]$ and $\mo\in \Mo$.
Thus any $\tau $ satisfying (\ref{tau}) suffices.

For Assumption~5, 
\begin{align*}
	&\norm{\bm \psi_{n}(\bm z, \th_1; \mo)-\bm \psi_{n}(\bm z, \th_2; \mo)	}
	= \norm{ X[\mo] X[\mo]^\T (\th_1 - \th_2) } \\
	&\leq \norm{X[\mo] X[\mo]^\T}_F \norm{\th_1 - \th_2}
	\leq c \p \norm{\th_1 - \th_2}.
\end{align*}
So $m_n[\mo]=c \p$ suffices. 
This together with the condition {\co $\nu+3w < \tau$} implies (\ref{eq:mCond}).

For Assumption~6, similar as before, it can be shown that
$\norm{\hat{J}_n(\th ; \mo) - J_n(\th ; \mo) }=O(n^{-1/2}d^2) = o(1)$,
and $\norm{\hat{V}_n(\th ; \mo) - V_n(\th ; \mo) } = o(1)$ uniformly in $\th \in \H[\mo]$ and $\mo\in \Mo$.
Also, $V_n(\thT ; \mo) - V_n(\th ; \mo) =0$.

For Assumption~7,
(\ref{eq84}) is implied by $\zeta/2 < \tau$, and
(\ref{eq93}) is implied by $\zeta < 1 - w$.
Since 
\begin{align*}
&\E \bigl\{\l(\cdot, \thT[\mo] ; \mo)- \E \l(\cdot, \thT[\mo]; \mo) \bigr\} \\
&\leq \E \bigl\{\l(\cdot, \thT[\mo] ; \mo) \bigr\}^2 = O(\p^2),
\end{align*}
(\ref{eq85}) holds under $m_1=1$ and $\zeta < 1-w$.
Similar calculations as before show that (\ref{eq89}) and (\ref{eq88}) are implied by $\zeta < 1-2w$ and $m_2=m_3=1$.


\section{Proof of Theorem~\ref{thm:graphexpert}}
First, we introduce the concept of ``compound experts''. A compound expert is defined as an expert sequence $(i_1,i_2,\ldots,i_T)$ whose $\size \leq k$ with some prescribed $k>0$. Then in order to tackle the problem of ``tracking the best expert'', we could simply apply the exponentially re-weighting algorithm over all the possible compound experts, which can yield provable tight regret bounds. The reason why this simple strategy is not used in practice is that the number of compound experts is usually too large to manage, while the fixed share algorithm greatly reduces the computational complexity and has similar regret bounds.

For our extension of ``tracking the best expert'' with graphical transitional constraints, following a similar proving strategy used in \cite[Chapter 5]{cesa2006prediction}, we first prove an equivalence between the results of the exponentially re-weighting algorithm over compound experts and the algorithm that we propose, and then apply the regret bound for the former algorithm directly.

The exponentially re-weighting algorithm that we are considering here is as follows. At each time $t=0,1,\ldots,T$, the distribution over the compound experts is maintained by $w'_t(i_1,i_2,\ldots,i_T)$ (not necessarily normalized) for all the sequences $(i_1,i_2,\ldots,i_T)$. The initial distribution is
\begin{align*}
&w'_0(i_1,i_2,\ldots,i_T) \\
&= w'_0(i_1)w'_0(i_2|i_1)w'_0(i_3|i_1,i_2)\cdots w'_0(i_T|i_1,\ldots,i_{T-1})\\
&= w'_0(i_1)w'_0(i_2|i_1)w'_0(i_3|i_2)\cdots w'_0(i_T|i_{T-1}) \\
&= w'_0(i_1)\prod_{t=1}^{T-1}w'_0(i_{t+1}|i_t)\\
&= \ind{i_1=1}\prod_{t=1}^{T-1}\biggl[(1-\kappa\beta_{i_t})\ind{i_{t+1}=i_t}+\kappa\beta_{i_t,i_{t+1}}\ind{i_{t+1}\neq i_t}\biggr] ,
\end{align*}
where the second equality is due to Markovian property. 
This initial distribution over compound experts ensures that only the ``valid'' expert sequences (those follow graphical transitions) have positive probabilities.
Based on the exponentially re-weighting updating rule, the distribution at each time instant $t=1,2,\ldots,T$ becomes
$
w'_t(i_1,i_2,\ldots,i_T) =  w'_0(i_1,i_2,\ldots,i_T)\exp(-\eta\sum_{s=1}^t l(i_s,\ys)) 
$. 

Marginally, at time $t$,
$$
w'_{i,t} = \sum_{i_1,\ldots,i_t,i,i_{t+2},\ldots,i_T} w'_t(i_1,\ldots,i_t,i_{t+2},\ldots,i_T).
$$ 
Then we have
$p'_{i,t} = w'_{i,t}/W'_t$ with  $W'_t=\sum_{j=1}^N w'_{j,t}$,
and $p'_{i,0} = w'_{i,0} = \ind{i=1}$. 
The exponentially forecaster draws action according to expert $i$ at time $t+1$ with probability $p'_{i,t}$.

\begin{lemma}
\label{lemma:graph1}
For all $\kappa\in(0,1/\deg)$, for any sequence of $T$ outcomes, and for all $t=0,1,\ldots,T$, the predictive distribution $p_{i,t}$ for $i=1,\ldots,N$ generated by our proposed Algorithm \ref{algo:trackGraphOrigin} is the same as the predictive distribution $p'_{i,t}$ for $i=1,\ldots,N$ that is maintained by the special exponentially re-weighting algorithm described above.
\end{lemma}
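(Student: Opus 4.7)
The plan is to prove the lemma by induction on $t$, showing that the unnormalized weight $w_{i,t}$ maintained by Algorithm~\ref{algo:trackGraphOrigin} coincides at every step with the marginal weight
$$ w'_{i,t} = \sum_{i_1,\ldots,i_{t-1},i_{t+1},\ldots,i_T} w'_t(i_1,\ldots,i_{t-1}, i, i_{t+1}, \ldots, i_T) $$
of the compound-expert exponentially re-weighted distribution. Since $p_{i,t}$ and $p'_{i,t}$ are obtained by normalizing $w_{i,t-1}$ and $w'_{i,t-1}$ by their respective sums, equality of the unnormalized marginals immediately yields equality of the predictive distributions.

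The base case $t=0$ is immediate from the initializations: $w_{1,0}=1$, $w_{i,0}=0$ for $i\neq 1$ in Algorithm~\ref{algo:trackGraphOrigin}, while $w'_{i,0}= \ind{i=1}$ in the compound-expert forecaster. For the inductive step, assume $w_{i,t-1}=w'_{i,t-1}$ for every $i$. Because the initial distribution factorizes as $w'_0(i_1)\prod_{s=1}^{T-1} w'_0(i_{s+1}\mid i_s)$ and the exponential update multiplies by $\exp(-\eta l(i_s,\bm z_s))$ at each time $s$, summing out all future indices $i_{t+1},\ldots,i_T$ contributes a constant factor that is the same for every choice of $i$ at time $t$ (since the graph-constrained transition kernel is a valid Markov kernel: its rows sum appropriately). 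Absorbing that constant, the recursion reduces to summing out only the index at time $t-1$:
$$ w'_{i,t} \;=\; \sum_{j=1}^N w'_{j,t-1}\, \exp(-\eta\, l(j,\bm z_t))\, \Big[(1-\kappa \beta_j)\ind{i=j}+\kappa \beta_{j,i}\ind{i\neq j}\Big]. $$

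Splitting this sum into the $j=i$ term and the $j\neq i$ terms, and using $v_{j,t}=w_{j,t-1}\exp(-\eta\, l(j,\bm z_t))$ together with the inductive hypothesis, gives
$$ w'_{i,t} \;=\; (1-\kappa \beta_i)\, v_{i,t} \;+\; \kappa \sum_{j\neq i}\beta_{j,i}\, v_{j,t} \;=\; (1-\kappa \beta_i)\, v_{i,t} \;+\; \kappa \sum_{j=1}^N \beta_{j,i}\, v_{j,t}, $$
where the last equality uses the absence of self-loops ($\beta_{i,i}=0$). This is exactly the update in line~5 of Algorithm~\ref{algo:trackGraphOrigin}, so $w_{i,t}=w'_{i,t}$, completing the induction.

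The main obstacle is bookkeeping the marginalization over future indices $i_{t+1},\ldots,i_T$: one must verify that the factors $\sum_{i_{s+1}} [(1-\kappa \beta_{i_s})\ind{i_{s+1}=i_s}+\kappa \beta_{i_s,i_{s+1}}\ind{i_{s+1}\neq i_s}]=1-\kappa \beta_{i_s}+\kappa \beta_{i_s}=1$ collapse cleanly, so that no trace of the future chain remains in $w'_{i,t}$ beyond the constant that cancels upon normalization. The condition $\kappa<1/D$ is precisely what ensures the transition kernel is nonnegative, so all weights remain valid probabilities and the marginalization argument is well-defined. Once this is in place, the equivalence of the normalized predictive distributions $p_{i,t}=p'_{i,t}$ follows at once.
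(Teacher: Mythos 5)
Your proof is correct and follows essentially the same route as the paper's: induction on $t$ with the base case from the initializations, marginalization of the compound-expert weights over future indices (which collapse because each transition row sums to $(1-\kappa\beta_{i_s})+\kappa\beta_{i_s}=1$, a step the paper uses implicitly), and identification of the resulting one-step recursion with line 5 of Algorithm~\ref{algo:trackGraphOrigin} via $\beta_{ii}=0$. The only blemish is an off-by-one in your displayed marginal, which places $i$ in slot $t$, whereas your recursion (correctly, and as in the paper) treats the marginal index as the expert at slot $t+1$, so that the loss factor $e^{-\eta\, l(j,\bm z_t)}$ attaches to the source $j$ rather than the destination $i$; align that definition with your recursion and the argument is fully consistent.
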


\begin{proof}
It is enough to show that for all $i$ and $t$, $w_{i,t}=w'_{i,t}$. We proceed by induction on $t$. For $t=0$, $w_{i,0}=w'_{i,0}=\ind{i=1}$ for all $i$. For the induction step, assume that $w_{i,s}=w'_{i,s}$ for all $i$ and all $s<t$. We then have
\begin{align*}
w'_{i,t}
=&\sum_{i_1,\ldots,i_t,i_{t+2},\ldots,i_T} w'_t(i_1,\ldots,i_t,i,i_{t+2},\ldots,i_T)\\
=&\sum_{i_1,\ldots,i_t,i_{t+2},\ldots,i_T} e^{ -\eta\sum_{s=1}^t l(i_s,\ys) } \times \\
&\hspace{2cm} w'_0(i_1,\ldots,i_t,i,i_{t+2},\ldots,i_T)\\
=&\sum_{i_1,\ldots,i_t} e^{ -\eta\sum_{s=1}^t l(i_s,\ys) } w'_0(i_1,\ldots,i_t,i)\\
=&\sum_{i_1,\ldots,i_t} e^{ -\eta\sum_{s=1}^t l(i_s,\ys) } w'_0(i_1,\ldots,i_t)\frac{w'_0(i_1,\ldots,i_t,i)}{w'_0(i_1,\ldots,i_t)}\\
=&\sum_{i_1,\ldots,i_t} e^{ -\eta\sum_{s=1}^t l(i_s,\ys) } w'_0(i_1,\ldots,i_t) \times \\
&\hspace{1cm} \biggl[(1-\kappa\beta_{i_t})\ind{i=i_t}+\kappa\beta_{i_t,i}\ind{i\neq i_t}\biggr]\\
=&\sum_{i_1,\ldots,i_t} e^{ -\eta l(i_t,\yt) } \exp\biggl(-\eta\sum_{s=1}^{t-1} l(i_s,\ys)\biggr) \times  \\
&\hspace{1cm} w'_0(i_1,\ldots,i_t)\biggl[(1-\kappa\beta_{i_t})\ind{i=i_t}+\kappa\beta_{i_t,i}\ind{i\neq i_t}\biggr]\\
=&\sum_{i_t} e^{ -\eta l(i_t,\yt) } w'_{i_t,t-1} \biggl[(1-\kappa\beta_{i_t})\ind{i=i_t}+\kappa\beta_{i_t,i}\ind{i\neq i_t}\biggr].
\end{align*}
By induction hypothesis, $w'_{i,t}$ further equals 
\begin{align*}
&\sum_{i_t} e^{ -\eta l(i_t,\yt) } w_{i_t,t-1}  
  \biggl[(1-\kappa\beta_{i_t})\ind{i=i_t}+\kappa\beta_{i_t,i}\ind{i\neq i_t}\biggr] \\
&=\sum_{i_t} v_{i_t,t-1}\biggl[(1-\kappa\beta_{i_t})\ind{i=i_t}+\kappa\beta_{i_t,i}\ind{i\neq i_t}\biggr]\\
&=(1-\kappa \bt_{i}) v_{i,t} + \kappa \sum_{j=1}^N\bt_{ji} v_{j,t} = w_{i,t} 
\end{align*}
where the last equality is by $\beta_{ii}=0$. 
\end{proof}

\begin{lemma}
\label{lemma:graph2}
For all $T\geq1$, if $l\in[0,1]$ and we run the exponentially weighted forecaster over compound experts as described before, we will have
\begin{align*}
\sum_{t=1}^T\sum_{i=1}^N p'_{i,t}l(i,\yt)\leq \frac{1}{\eta}\ln\frac{1}{W'_T}+\frac{\eta}{8}T
\end{align*}
\end{lemma}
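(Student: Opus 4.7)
The plan is to follow the classical exponential-weights potential argument, treating the family of compound experts as a flat (though exponentially large) set of experts on which the algorithm performs the standard multiplicative update $w'_t(i_1,\ldots,i_T) = w'_{t-1}(i_1,\ldots,i_T)\exp\bigl(-\eta\, l(i_t, \ys_t)\bigr)$. The total mass $W'_t$ is the potential function, and the regret bound emerges from tracking how $\log W'_t$ evolves over time.

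The first step is to show that the one-step evolution of the potential factors through the marginals $w'_{i,t}$. Grouping the sum defining $W'_t = \sum_{i_1,\ldots,i_T} w'_t(i_1,\ldots,i_T)$ by the value of the time-$t$ coordinate and using $w'_{i,t}/W'_{t-1} = p'_{i,t}$ (with the indexing convention fixed just above the lemma statement, in which $W'_{t-1}$ normalizes the pre-update marginal), one obtains the clean identity
\[
\frac{W'_t}{W'_{t-1}} = \sum_{i=1}^N p'_{i,t}\, \exp\bigl(-\eta\, l(i,\ys_t)\bigr).
\]
The second step is to invoke Hoeffding's lemma: since $l(\cdot,\ys_t) \in [0,1]$, regarding it as a random variable under the distribution $p'_{\cdot,t}$ gives
\[
\log \sum_i p'_{i,t}\, e^{-\eta l(i,\ys_t)} \;\le\; -\eta \sum_i p'_{i,t}\, l(i,\ys_t) + \frac{\eta^2}{8}.
\]

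Combining these two inequalities and rearranging yields
\[
\sum_i p'_{i,t}\, l(i,\ys_t) \;\le\; -\frac{1}{\eta}\log\frac{W'_t}{W'_{t-1}} + \frac{\eta}{8}.
\]
Summing over $t=1,\ldots,T$ telescopes the logarithmic terms into $-\eta^{-1}\log(W'_T/W'_0)$, so the third step is to verify $W'_0 = 1$. This is where the special prior matters: since $w'_0(i_1,\ldots,i_T) = \ind{i_1=1}\prod_{t=1}^{T-1}\bigl[(1-\kappa\beta_{i_t})\ind{i_{t+1}=i_t} + \kappa\beta_{i_t,i_{t+1}}\ind{i_{t+1}\neq i_t}\bigr]$, summing out $i_{T}, i_{T-1},\ldots,i_2$ one layer at a time reduces to checking $(1-\kappa\beta_i) + \kappa\sum_{j\neq i}\beta_{ij} = 1$, which holds by the definition $\beta_i = \sum_j \beta_{ij}$. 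With $W'_0 = 1$, the telescoping sum becomes exactly $\eta^{-1}\log(1/W'_T)$, giving the claim.

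There is no substantial obstacle here — the entire argument is the textbook potential-function proof for exponential weights, and the novelty of the graph-constrained setting enters only through the construction of the prior $w'_0$, whose normalization is the one point requiring a short combinatorial check. The constant $1/8$ in the quadratic term is the standard Hoeffding constant for $[0,1]$-valued random variables and is tight; no concentration tool beyond Hoeffding is needed.
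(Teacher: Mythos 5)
Your proof is correct and takes essentially the same route as the paper: the paper reduces the per-round mixture loss $\sum_i p'_{i,t}\,l(i,\yt)$ to the loss of the exponentially weighted forecaster over compound experts and then invokes Lemma~5.1 of Cesa-Bianchi and Lugosi ``by noticing that $W'_0=1$,'' whereas you simply inline that cited lemma's standard potential argument (the identity $W'_t/W'_{t-1}=\sum_i p'_{i,t}e^{-\eta l(i,\yt)}$, Hoeffding's lemma, and telescoping). Your explicit verification that $W'_0=1$ via $(1-\kappa\beta_i)+\kappa\sum_{j\neq i}\beta_{ij}=1$ (which uses $\beta_{ii}=0$, valid since the graph has no self-loops) is the same normalization fact the paper asserts without detail, so the two proofs coincide in substance.
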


\begin{proof}
First, notice that
\begin{align*}
W'_t
=&\sum_{i=1}^N w'_{i,t}\\
=&\sum_{i=1}^N \sum_{i_1,\ldots,i_t,i_{t+2},\ldots,i_T}^N w'_t(i_1,\ldots,i_t,i,i_{t+2},\ldots,i_T)\\
=&\sum_{i_1,\ldots,i_T} w'_t(i_1,\ldots,i_T).
\end{align*}
Then, we also have
\begin{align*}
&\sum_{i=1}^N p'_{i,t}l(i,\yt) 
= \sum_{i_{t}} l(i_t,\yt)\frac{w'_{i_t,t}}{W'_{t-1}}\\
&= \sum_{i_{t}} l(i_t,\yt)\frac{\sum_{i_1,\ldots,i_{t-1},i_{t+1},\ldots,i_T}w'_{t-1}(i_1,\ldots,i_T)}{W'_{t-1}}\\
&= \sum_{i_1,\ldots,i_T}\frac{w'_t(i_1,\ldots,i_T)}{W'_{t-1}} l(i_t,\yt) .
\end{align*}
Then we can directly apply Lemma 5.1 in \cite[Chapter 5]{cesa2006prediction} by noticing that $W'_0=1$.
\end{proof}

\vspace{0.2cm}

\noindent\textbf{Proof of Theorem~\ref{thm:graphexpert}}

\begin{proof}
According to Lemma \ref{lemma:graph1}, it is equivalent to prove the bound for the equivalent exponentially weighted forecaster. There we have
\begin{align*}
&w'_0(i_1,\ldots,i_T) \\
&= \ind{i_1=1}\prod_{t=1}^{T-1}\biggl[(1-\kappa\beta_{i_t})\ind{i_{t+1}=i_t}+\kappa\beta_{i_t,i_{t+1}}\ind{i_{t+1}\neq i_t}\biggr]\\
&\geq  (1-\kappa \deg)^{T-k-1} \kappa^k
\end{align*}
for all the sequence $(i_1,\ldots,i_T)$ with $\size\leq k$ and transitions restricted on the graph.

Also, we have
\begin{align*}
\ln w'_T(i_1,\ldots,i_T)=\ln w'_0(i_1,\ldots,i_T)-\eta\sum_{t=1}^T l(i_t,\yt) .
\end{align*}
And $W'_T\geq w'_T(i_1,\ldots,i_T)$. Then by Lemma \ref{lemma:graph2} and some simple manipulations, we obtain
\begin{align*}
&\sum_{t=1}^T \biggl(\sum_{i=1}^N l(i,\yt)p_{i,t}-l(i_t,\yt)\biggr) \\
&\leq \frac{1}{\eta}(T-k-1)\log\frac{1}{1-\kappa \deg}+\frac{1}{\eta}k\log\frac{1}{\kappa}+ \eta\frac{T}{8}.
\end{align*}
\end{proof}

\ifCLASSOPTIONcaptionsoff
  \newpage
\fi

\clearpage

\balance
\bibliographystyle{IEEEtran}
\bibliography{learningLimit,references_com}

\begin{thebibliography}{10}
\providecommand{\url}[1]{#1}
\csname url@samestyle\endcsname
\providecommand{\newblock}{\relax}
\providecommand{\bibinfo}[2]{#2}
\providecommand{\BIBentrySTDinterwordspacing}{\spaceskip=0pt\relax}
\providecommand{\BIBentryALTinterwordstretchfactor}{4}
\providecommand{\BIBentryALTinterwordspacing}{\spaceskip=\fontdimen2\font plus
\BIBentryALTinterwordstretchfactor\fontdimen3\font minus
  \fontdimen4\font\relax}
\providecommand{\BIBforeignlanguage}[2]{{%
\expandafter\ifx\csname l@#1\endcsname\relax
\typeout{** WARNING: IEEEtran.bst: No hyphenation pattern has been}%
\typeout{** loaded for the language `#1'. Using the pattern for}%
\typeout{** the default language instead.}%
\else
\language=\csname l@#1\endcsname
\fi
#2}}
\providecommand{\BIBdecl}{\relax}
\BIBdecl

\bibitem{akaike1970statistical}
H.~Akaike, ``Statistical predictor identification,'' \emph{Ann. Inst. Statist.
  Math.}, vol.~22, no.~1, pp. 203--217, 1970.

\bibitem{akaike1998information}
------, ``Information theory and an extension of the maximum likelihood
  principle,'' in \emph{Selected Papers of Hirotugu Akaike}.\hskip 1em plus
  0.5em minus 0.4em\relax Springer, 1998, pp. 199--213.

\bibitem{TIC}
K.~Takeuchi, ``Distribution of informational statistics and a criterion of
  model fitting,'' \emph{Suri-Kagaku (Mathematical Sciences)}, no. 153, pp.
  12--18, 1976.

\bibitem{van2000asymptotic}
A.~W. Van~der Vaart, \emph{Asymptotic statistics}.\hskip 1em plus 0.5em minus
  0.4em\relax Cambridge university press, 2000, vol.~3.

\bibitem{shao1997asymptotic}
J.~Shao, ``An asymptotic theory for linear model selection,'' \emph{Statist.
  Sinica}, vol.~7, no.~2, pp. 221--242, 1997.

\bibitem{DingBridge2}
J.~Ding, V.~Tarokh, and Y.~Yang, ``Optimal variable selection in regression
  models,'' \emph{http://jding.org/jie-uploads/2017/03/variable-selection.pdf},
  2016.

\bibitem{portnoy1984asymptotic}
S.~Portnoy \emph{et~al.}, ``Asymptotic behavior of m estimators of p regression
  parameters when p2/n is large. i. consistency,'' \emph{Ann. Stat.}, vol.~12,
  no.~4, pp. 1298--1309, 1984.

\bibitem{portnoy1986central}
S.~Portnoy, ``On the central limit theorem in r p when p goes to infinity,''
  \emph{Probab. Theory Relat. Fields}, vol.~73, no.~4, pp. 571--583, 1986.

\bibitem{portnoy1988asymptotic}
------, ``Asymptotic behavior of likelihood methods for exponential families
  when the number of parameters tends to infinity,'' \emph{Ann. Stat.}, pp.
  356--366, 1988.

\bibitem{DingLoLExperiment}
J.~Ding, E.~Diao, J.~Zhou, and V.~Tarokh, ``On statistical efficiency in
  learning open-souce codes,''
  \emph{\url{https://github.com/JieGroup/On-Statistical-Efficiency-in-Learning}}.

\bibitem{barron1999risk}
A.~Barron, L.~Birg{\'e}, and P.~Massart, ``Risk bounds for model selection via
  penalization,'' \emph{Probability theory and related fields}, vol. 113,
  no.~3, pp. 301--413, 1999.

\bibitem{massart2007concentration}
P.~Massart, \emph{Concentration inequalities and model selection}.\hskip 1em
  plus 0.5em minus 0.4em\relax Springer, 2003, vol.~6.

\bibitem{claeskens2008model}
G.~Claeskens, N.~L. Hjort \emph{et~al.}, ``Model selection and model
  averaging,'' \emph{Cambridge Books}, 2008.

\bibitem{arlot2010survey}
S.~Arlot, A.~Celisse \emph{et~al.}, ``A survey of cross-validation procedures
  for model selection,'' \emph{Stat. Surv.}, vol.~4, pp. 40--79, 2010.

\bibitem{DingOverview}
J.~Ding, V.~Tarokh, and Y.~Yang, ``Model selection techniques: An overview,''
  \emph{IEEE Signal Process. Mag.}, vol.~35, no.~6, pp. 16--34, 2018.

\bibitem{akaike1969fitting}
H.~Akaike, ``Fitting autoregressive models for prediction,'' \emph{Ann. Inst.
  Statist. Math.}, vol.~21, no.~1, pp. 243--247, 1969.

\bibitem{schwarz1978estimating}
G.~Schwarz, ``Estimating the dimension of a model,'' \emph{Ann. Statist.},
  vol.~6, no.~2, pp. 461--464, 1978.

\bibitem{casella2009consistency}
G.~Casella, F.~J. Gir{\'o}n, M.~L. Mart{\'\i}nez, and E.~Moreno, ``Consistency
  of bayesian procedures for variable selection,'' \emph{Ann. Stat.}, pp.
  1207--1228, 2009.

\bibitem{hansen2001model}
M.~H. Hansen and B.~Yu, ``Model selection and the principle of minimum
  description length,'' \emph{J. Amer. Statist. Assoc.}, vol.~96, no. 454, pp.
  746--774, 2001.

\bibitem{hannan1979determination}
E.~J. Hannan and B.~G. Quinn, ``The determination of the order of an
  autoregression,'' \emph{J. Roy. Statist. Soc. Ser. B}, vol.~41, no.~2, pp.
  190--195, 1979.

\bibitem{rissanen1986stochastic}
J.~Rissanen, ``Stochastic complexity and modeling,'' \emph{Ann. Statist.}, pp.
  1080--1100, 1986.

\bibitem{wei1992predictive}
C.-Z. Wei, ``On predictive least squares principles,'' \emph{Ann. Stat.}, pp.
  1--42, 1992.

\bibitem{mallows1973some}
C.~L. Mallows, ``Some comments on cp,'' \emph{Technometrics}, vol.~15, no.~4,
  pp. 661--675, 1973.

\bibitem{nishii1984asymptotic}
R.~Nishii \emph{et~al.}, ``Asymptotic properties of criteria for selection of
  variables in multiple regression,'' \emph{Ann. Stat.}, vol.~12, no.~2, pp.
  758--765, 1984.

\bibitem{rao1989strongly}
R.~Rao and Y.~Wu, ``A strongly consistent procedure for model selection in a
  regression problem,'' \emph{Biometrika}, vol.~76, no.~2, pp. 369--374, 1989.

\bibitem{craven1978smoothing}
P.~Craven and G.~Wahba, ``Smoothing noisy data with spline functions,''
  \emph{Numerische Mathematik}, vol.~31, no.~4, pp. 377--403, 1978.

\bibitem{lepskii1991problem}
O.~Lepskii, ``On a problem of adaptive estimation in gaussian white noise,''
  \emph{Theory Probab. Its Appl.}, vol.~35, no.~3, pp. 454--466, 1991.

\bibitem{goldenshluger2008universal}
A.~Goldenshluger, O.~Lepski \emph{et~al.}, ``Universal pointwise selection rule
  in multivariate function estimation,'' \emph{Bernoulli}, vol.~14, no.~4, pp.
  1150--1190, 2008.

\bibitem{goldenshluger2013general}
A.~Goldenshluger and O.~Lepski, ``General selection rule from a family of
  linear estimators,'' \emph{Theory Probab. Its Appl.}, vol.~57, no.~2, pp.
  209--226, 2013.

\bibitem{DingBridge}
J.~Ding, V.~Tarokh, and Y.~Yang, ``Bridging {AIC} and {BIC}: a new criterion
  for autoregression,'' \emph{IEEE Trans. Inf. Theory}, 2017.

\bibitem{dixon2018takeuchi}
M.~Dixon and T.~Ward, ``Takeuchi's information criteria as a form of
  regularization,'' \emph{arXiv preprint arXiv:1803.04947}, 2018.

\bibitem{birge2001gaussian}
L.~Birg{\'e} and P.~Massart, ``Gaussian model selection,'' \emph{J. Eur. Math.
  Soc.}, vol.~3, no.~3, pp. 203--268, 2001.

\bibitem{birge2007minimal}
------, ``Minimal penalties for gaussian model selection,'' \emph{Probab.
  Theory Relat. Fields}, vol. 138, no. 1-2, pp. 33--73, 2007.

\bibitem{baudry2012slope}
J.-P. Baudry, C.~Maugis, and B.~Michel, ``Slope heuristics: overview and
  implementation,'' \emph{Stat. Comput.}, vol.~22, no.~2, pp. 455--470, 2012.

\bibitem{arlot2019minimal}
S.~Arlot, ``Minimal penalties and the slope heuristics: a survey,'' \emph{arXiv
  preprint arXiv:1901.07277}, 2019.

\bibitem{arlot2019rejoinder}
------, ``Rejoinder on: Minimal penalties and the slope heuristics: a survey,''
  \emph{arXiv preprint arXiv:1909.13499}, 2019.

\bibitem{boucheron2003concentration}
S.~Boucheron, G.~Lugosi, and P.~Massart, ``Concentration inequalities using the
  entropy method,'' \emph{Ann. Probab.}, vol.~31, no.~3, pp. 1583--1614, 2003.

\bibitem{boucheron2003concentration2}
S.~Boucheron, G.~Lugosi, and O.~Bousquet, ``Concentration inequalities,'' in
  \emph{Summer School on Machine Learning}.\hskip 1em plus 0.5em minus
  0.4em\relax Springer, 2003, pp. 208--240.

\bibitem{boucheron2013concentration}
S.~Boucheron, G.~Lugosi, and P.~Massart, \emph{Concentration inequalities: A
  nonasymptotic theory of independence}.\hskip 1em plus 0.5em minus 0.4em\relax
  Oxford university press, 2013.

\bibitem{koltchinskii2011introduction}
V.~Koltchinskii, ``Oracle inequalities in empirical risk minimization and
  sparse recovery problems,'' \emph{Lecture Notes in Mathematics}, vol. 2033,
  2011.

\bibitem{gaunt2017chi}
R.~E. Gaunt, A.~M. Pickett, G.~Reinert \emph{et~al.}, ``Chi-square
  approximation by stein’s method with application to pearson’s
  statistic,'' \emph{Ann. Appl. Probab}, vol.~27, no.~2, pp. 720--756, 2017.

\bibitem{arlot2009data}
S.~Arlot and P.~Massart, ``Data-driven calibration of penalties for
  least-squares regression,'' \emph{J. Mach. Learn. Res.}, vol.~10, no. Feb,
  pp. 245--279, 2009.

\bibitem{stone1974cross}
M.~Stone, ``Cross-validatory choice and assessment of statistical
  predictions,'' \emph{J. Royal Stat. Soc. B}, vol.~36, no.~2, pp. 111--133,
  1974.

\bibitem{allen1974relationship}
D.~M. Allen, ``The relationship between variable selection and data
  agumentation and a method for prediction,'' \emph{Technometrics}, vol.~16,
  no.~1, pp. 125--127, 1974.

\bibitem{geisser1975predictive}
S.~Geisser, ``The predictive sample reuse method with applications,'' \emph{J.
  Amer. Statist. Assoc.}, vol.~70, no. 350, pp. 320--328, 1975.

\bibitem{burman1989comparative}
P.~Burman, ``A comparative study of ordinary cross-validation, v-fold
  cross-validation and the repeated learning-testing methods,''
  \emph{Biometrika}, vol.~76, no.~3, pp. 503--514, 1989.

\bibitem{shao1993linear}
J.~Shao, ``Linear model selection by cross-validation,'' \emph{J. Amer.
  Statist. Assoc.}, vol.~88, no. 422, pp. 486--494, 1993.

\bibitem{zhang1993model}
P.~Zhang, ``Model selection via multifold cross-validation,'' \emph{Ann.
  Stat.}, pp. 299--313, 1993.

\bibitem{stone1977asymptotic}
M.~Stone, ``An asymptotic equivalence of choice of model by cross-validation
  and akaike's criterion,'' \emph{J. R. Stat. Soc. Ser. B}, pp. 44--47, 1977.

\bibitem{zhang2015cross}
Y.~Zhang and Y.~Yang, ``Cross-validation for selecting a model selection
  procedure,'' \emph{J. Econom.}, vol. 187, no.~1, pp. 95--112, 2015.

\bibitem{DingBAGofT}
J.~Zhang, J.~Ding, and Y.~Yang, ``A binary regression adaptive goodness-of-fit
  test,'' \emph{arXiv preprint arxiv:1911.03063}, 2019.

\bibitem{yang2005can}
Y.~Yang, ``Can the strengths of {AIC} and {BIC} be shared? a conflict between
  model indentification and regression estimation,'' \emph{Biometrika},
  vol.~92, no.~4, pp. 937--950, 2005.

\bibitem{burnham2003model}
K.~P. Burnham and D.~R. Anderson, \emph{Model selection and multimodel
  inference: a practical information-theoretic approach}.\hskip 1em plus 0.5em
  minus 0.4em\relax Springer Science \& Business Media, 2003.

\bibitem{shibata1981optimal}
R.~Shibata, ``An optimal selection of regression variables,''
  \emph{Biometrika}, vol.~68, no.~1, pp. 45--54, 1981.

\bibitem{shibata1980asymptotically}
------, ``Asymptotically efficient selection of the order of the model for
  estimating parameters of a linear process,'' \emph{Ann. Statist.}, vol.~8,
  no.~1, pp. 147--164, 1980.

\bibitem{ing2005orderselection}
C.-K. Ing and C.-Z. Wei, ``Order selection for same-realization predictions in
  autoregressive processes,'' \emph{Ann. Statist.}, vol.~33, no.~5, pp.
  2423--2474, 2005.

\bibitem{white1982maximum}
H.~White, ``Maximum likelihood estimation of misspecified models,''
  \emph{Econometrica}, pp. 1--25, 1982.

\bibitem{vapnik1971uniform}
V.~N. Vapnik and A.~Y. Chervonenkis, ``On the uniform convergence of relative
  frequencies of events to their probabilities,'' in \emph{Measures of
  complexity}.\hskip 1em plus 0.5em minus 0.4em\relax Springer, 2015, pp.
  11--30.

\bibitem{shawe1998structural}
J.~Shawe-Taylor, P.~L. Bartlett, R.~C. Williamson, and M.~Anthony, ``Structural
  risk minimization over data-dependent hierarchies,'' \emph{IEEE Trans. Inf.
  Theory}, vol.~44, no.~5, pp. 1926--1940, 1998.

\bibitem{koltchinskii2001rademacher}
V.~Koltchinskii, ``Rademacher penalties and structural risk minimization,''
  \emph{IEEE Trans. Inf. Theory}, vol.~47, no.~5, pp. 1902--1914, 2001.

\bibitem{bousquet2003introduction}
O.~Bousquet, S.~Boucheron, and G.~Lugosi, ``Introduction to statistical
  learning theory,'' in \emph{Summer School on Machine Learning}.\hskip 1em
  plus 0.5em minus 0.4em\relax Springer, 2003, pp. 169--207.

\bibitem{pinsker1980optimal}
M.~S. Pinsker, ``Optimal filtering of square-integrable signals in gaussian
  noise,'' \emph{Probl. Peredachi Inf.}, vol.~16, no.~2, pp. 52--68, 1980.

\bibitem{pinsker1984learning}
M.~Pinsker and S.~Efroimovich, ``Learning algorithm for nonparametric
  filtering,'' \emph{Autom. Remote Control}, vol.~45, no.~11, pp. 1434--1440,
  1984.

\bibitem{nussbaum1999minimax}
M.~Nussbaum, ``Minimax risk: Pinsker bound,'' \emph{Encyclopedia of Statistical
  Sciences}, vol.~3, pp. 451--460, 1999.

\bibitem{tsybakov2008introduction}
A.~B. Tsybakov, \emph{Introduction to nonparametric estimation}.\hskip 1em plus
  0.5em minus 0.4em\relax Springer Science \& Business Media, 2008.

\bibitem{ing2007accumulated}
C.-K. Ing, ``Accumulated prediction errors, information criteria and optimal
  forecasting for autoregressive time series,'' \emph{Ann. Statist.}, vol.~35,
  no.~3, pp. 1238--1277, 2007.

\bibitem{erven2012catching}
T.~v. Erven, P.~Gr{\"u}nwald, and S.~De~Rooij, ``Catching up faster by
  switching sooner: a predictive approach to adaptive estimation with an
  application to the {AIC}--{BIC} dilemma,'' \emph{J. R. Stat. Soc. Ser. B.},
  vol.~74, no.~3, pp. 361--417, 2012.

\bibitem{bradley1986basic}
R.~C. Bradley, ``Basic properties of strong mixing conditions,'' in
  \emph{Dependence in probability and statistics}.\hskip 1em plus 0.5em minus
  0.4em\relax Springer, 1986, pp. 165--192.

\bibitem{ferguson2017course}
T.~S. Ferguson, \emph{A course in large sample theory}.\hskip 1em plus 0.5em
  minus 0.4em\relax Routledge, 2017.

\bibitem{oliveira2013lower}
R.~I. Oliveira, ``The lower tail of random quadratic forms, with applications
  to ordinary least squares and restricted eigenvalue properties,'' \emph{arXiv
  preprint arXiv:1312.2903}, 2013.

\bibitem{li1987asymptotic}
K.-C. Li, ``Asymptotic optimality for {C}p, {C}l, cross-validation and
  generalized cross-validation: discrete index set,'' \emph{Ann. Stat.}, pp.
  958--975, 1987.

\bibitem{stoltz2005internal}
G.~Stoltz and G.~Lugosi, ``Internal regret in on-line portfolio selection,''
  \emph{Mach. Learn.}, vol.~59, no. 1-2, pp. 125--159, 2005.

\bibitem{cesa2006prediction}
N.~Cesa-Bianchi and G.~Lugosi, \emph{Prediction, learning, and games}.\hskip
  1em plus 0.5em minus 0.4em\relax Cambridge university press, 2006.

\bibitem{stoltz2007learning}
G.~Stoltz and G.~Lugosi, ``Learning correlated equilibria in games with compact
  sets of strategies,'' \emph{Econ. Behav.}, vol.~59, no.~1, pp. 187--208,
  2007.

\bibitem{spokoiny2012parametric}
V.~Spokoiny \emph{et~al.}, ``Parametric estimation. finite sample theory,''
  \emph{Ann. Stat.}, vol.~40, no.~6, pp. 2877--2909, 2012.

\end{thebibliography}

\end{document}